  \def\Xint#1{\mathchoice
    {\XXint\displaystyle\textstyle{#1}}%
    {\XXint\textstyle\scriptstyle{#1}}%
    {\XXint\scriptstyle\scriptscriptstyle{#1}}%
    {\XXint\scriptscriptstyle\scriptscriptstyle{#1}}%
    \!\int}
    \def\XXint#1#2#3{{\setbox0=\hbox{$#1{#2#3}{\int}$ }
    \vcenter{\hbox{$#2#3$ }}\kern-.6\wd0}}
    \def\dashint{\Xint-}
\newcommand{\Besov}{{B^{\theta}_{p, p}(\partial X)}}
\newcommand{\dBesov}{{\dot{ B}^{\theta}_{p}(\partial X)}}
\newcommand{\Newton}{{N^{1, p}(X, \mu_\lambda)}}
\newcommand{\lbesovo}{{\mathcal B^{0, \lambda}_\alpha(\partial X)}}
\newcommand{\dlbesovo}{{\dot{\mathcal B}^{0, \lambda}_\alpha(\partial X)}}
\newcommand{\dlbesovoe}{{\dot{\mathcal B}^{0, \lambda}_\alpha}}
\newcommand{\lbesov}{{\mathcal B^{\theta, \lambda}_{p}(\partial X)}}
\newcommand{\dlbesov}{{\dot{\mathcal B}^{\theta, \lambda}_{p}(\partial X)}}
\newcommand{\dlbesove}{{\dot{\mathcal B}^{\theta, \lambda}_{p}}}
\newcommand{\lpb}{{L^p(\partial X)}}
\newcommand{\bx}{{{\partial X}}}
\newcommand{\mul}{{{\mu_\lambda}}}
\newcommand{\Tr}{{\rm Tr}\,}
\newcommand{\Ex}{{\rm Ex}\,}
\newcommand{\Id}{{\rm Id}\,}
\newcommand{\lbd}{{\lambda}}
\newcommand{\Lip}{{\rm Lip}\,}
\newcommand{\LIP}{{\rm LIP}\,}
\newcommand{\N}{{\mathbb N}}
\newcommand{\dyadic}{\mathscr{Q}}
\newcommand{\diam}{\text{\rm\,diam}}
\newcommand{\real}{{\mathbb R}}
\newcommand{\dist}{\text{\rm dist}}
\newcommand{\rarrow}{\rightarrow}
\newtheorem{thm}{Theorem}[section]
\newtheorem{lem}[thm]{Lemma}
\newtheorem{prop}[thm]{Proposition}
\newtheorem{rem}[thm]{Remark}
\newtheorem{cor}[thm]{Corollary}
\newtheorem{defn}[thm]{Definition}
\newtheorem{example}[thm]{Example}
\numberwithin{equation}{section}
\begin{document}
\title{\Large\bf  Dyadic norm Besov-type spaces as trace spaces on regular trees
\footnotetext{\hspace{-0.35cm}
$2010$ {\it Mathematics Subject classfication}: 46E35, 30L05
\endgraf{{\it Key words and phases}: Besov-type space, regular tree, trace space, dyadic norm}
\endgraf{Authors have been supported by the Academy of Finland via  Centre of Excellence in Analysis and Dynamics Research (project No. 307333).}
}}
\author{Pekka Koskela,  Zhuang Wang}
\date{ }
\maketitle
\begin{abstract}
In this paper, we study function spaces defined via dyadic energies on the boundaries of regular trees. 
We show that correct choices of dyadic energies result in  Besov-type spaces that are trace spaces of (weighted) first order Sobolev spaces.
\end{abstract}

\section{Introduction}

Over the past two decades, analysis on general metric measure spaces has attracted a lot of attention, e.g., \cite{BB11,BBS03,H03,HP,H01,HK98,HKST}. Especially, the case of a regular tree and its  Cantor-type boundary has been studied in \cite{BBGS}. Furthermore, Sobolev spaces, Besov spaces and Triebel-Lizorkin spaces on metric measure spaces have been studied in \cite{SS,BS18,So} via hyperbolic fillings. A related approach was used in \cite{KTW}, where the trace results of Sobolev spaces and of related fractional smoothness function spaces were recovered by using a dyadic norm and the Whitney extension operator. 


 Dyadic energy has also been  used to study the regularity and modulus of continuity of space-filling curves. One of the motivations for this paper is the approach in \cite{KKZ}. Given a continuous $g: S^1\rightarrow \real^n$, consider the dyadic energy
\begin{equation}\label{dyadic-energy}
\mathcal E(g; p, \lambda):=\sum_{i=1}^{+\infty}i^\lambda\sum_{j=1}^{2^i}|g_{I_{i, j}}-g_{\widehat I_{i,j}}|^p.
\end{equation}
 Here, $\{I_{i,j}: i\in \mathbb N,  j=1, \cdots, 2^i\}$ is a dyadic decomposition of $S^1$ such that for every fixed $i\in \mathbb N$, $\{I_{i,j}: j=1, \cdots, 2^i\}$ is a family of arcs of length $2\pi/2^i$ with $\bigcup_{j}I_{i,j} = S^1$. The next generation is constructed in such a way that for each $j\in\{1, \cdots, 2^{i+1}\}$, there exists a unique number $k\in \{1, \cdots, 2^i\}$, satisfying $I_{i+1,j}\subset I_{i,k}$. We denote this parent of $I_{i+1,j}$ by $\widehat I_{i+1,j}$ and set $\widehat I_{1,j}=S^1$ for $j=1,2$. By $g_A$, $A\subset S^1$, we denote the mean value $g_A=\dashint_A g \, d\mathcal H^1=\frac{1}{\mathcal H^1(A)}\int_A g\, d\mathcal H^1$. One could expect to be able to use the energy \eqref{dyadic-energy} to characterize the trace spaces of some Sobolev spaces (with suitable weights) on the unit disk. On the contrary, the results in  \cite{KTW} suggest that the trace spaces of Sobolev spaces (with suitable weights) on the unit disk should be characterized by the energy
 \begin{equation}\label{dyadic-energy2}
\mathbb E(g; p, \lambda):=\sum_{i=1}^{+\infty}i^\lambda\sum_{j=1}^{2^i}|g_{I_{i, j}}-g_{ I_{i,j-1}}|^p,
\end{equation}
where $I_{i, 0}=I_{i,2^i}$, and the example $g(x)=\chi_{I_{1,1}}$ shows that $\mathcal E(g; p, \lambda)$ is not comparable to $\mathbb E(g; p, \lambda)$.

Notice that the energies \eqref{dyadic-energy} and \eqref{dyadic-energy2} can be viewed as dyadic energies on the boundary of a binary tree ($2$-regular tree). More precisely, for a $2$-regular tree $X$ in Section \ref{s-regular} with $\epsilon=\log 2$ in the metric \eqref{metric}, the measure $\nu$ on the boundary $\bx$ is the Hausdorff $1$-measure by Proposition \ref{Ahlfor-boundary}. Furthermore, there is a one-to-one map $h$ from the dyadic decomposition of $S^1$ to the dyadic decomposition of $\bx$ defined in Section \ref{s-besov}, which preserves the parent relation, i.e., $h(\widehat I)=\widehat {h(I)}$ for all dyadic intervals $I$ of $S^1$. Since every point in $S^1$ is the limit of a sequence of dyadic intervals, we can define a map $\tilde h$  from $S^1$ to $\bx$  by mapping any point $x=\bigcap_{k\in \mathbb N} I_k$ in $S^1$ to the limit of $\{h(I_k)\}_{k\in \mathbb N}$ (if the limit is not unique for different choices of sequence $\{I_k\}$ for $x$, then just pick one of them). It follows from the definition of $\bx$ that the map $\tilde h$ is an injective map. Since the measure $\nu$ is the Hausdorff $1$-measure and $\bx\setminus \tilde h(S^1)$ is a set of countably many points, it follows from the definition of Hausdorff measure that $\nu(\bx\setminus \tilde h(S^1))=0$.  Since $\diam (I) \approx \diam (h(I))$ for any dyadic interval $I$ of $S^1$ and  we can use dyadic intervals to cover a given set  in the definition of a Hausdorff measure,  there is a constant $C\geq 1$ such that
$$\frac 1C\mathcal H^1(A)\leq\nu(\tilde h(A))\leq C \mathcal H^1(A)$$
for any measurable set $A\subset S^1$. Then one could expect to be able to use an energy similar to \eqref{dyadic-energy2}, the $\dot{\mathbb B}_p^{1/p, \lambda}$-energy given by
\begin{equation}\label{dyadic-energy3}
\|g\|^p_{\dot{\mathbb B}_p^{1/p, \lambda}}:= \sum_{i=1}^{\infty} i^\lambda \sum_{j=1}^{2^i}\left|g_{h(I_{j, i})}-g_{ h(I_{j, i-1})}\right|^p,
\end{equation} 
to characterize the trace spaces of suitable Sobolev spaces of the $2$-regular tree. This turns out to hold in the sense that any function in $L^p(\bx)$ with finite $\dot{\mathbb B}_p^{1/p, \lambda}$-energy can be extended to a function  in a certain Sobolev class.

However, there exists a Sobolev function whose trace function has infinite $\dot{\mathbb B}_p^{1/p, \lambda}$-energy. More precisely, let $0$ be the root of the tree $X$ and let $x_1, x_2$ be the two children of $0$. We define a function  $u$ on $X$  by setting  $u(x)=0$ if the geodesic from $0$ to $x$ passes through $x_1$, $u(x)=1$ if the geodesic from $0$ to $x$ passes through $x_2$ and define $u$ to be linear on the geodesic $[x_1, x_2]=[0, x_1]\cup[0, x_2]$. Then $u$ is a Sobolev function on $X$ with the trace function $g=\chi_{h(I_{1,1})}$ whose $\dot{\mathbb B}_p^{1/p, \lambda}$-energy is not finite for any $\lambda\geq -1$, since the energy \eqref{dyadic-energy2}  of the function $\chi_{I_{1,1}}$ is not finite for any $\lambda\geq -1$. But the energy \eqref{dyadic-energy} of  the function $\chi_{I_{1,1}}$ is finite. Hence, rather than studying the energy \eqref{dyadic-energy3}, we shall work with an energy similar to  \eqref{dyadic-energy}. We define the dyadic $\dot{\mathcal B}_p^{1/p, \lambda}$ energy by setting
$$\|g\|^p_{\dot{\mathcal B}_p^{1/p, \lambda}}:=\sum_{i=1}^{\infty} i^\lambda \sum_{j=1}^{2^i} \left|g_{h(I_{i, j})}-g_{ h(\widehat I_{i, j})}\right|^p= \sum_{i=1}^{\infty} i^\lambda \sum_{I\in\dyadic_i}\left|g_{I}-g_{ \widehat I}\right|^p,$$
where $\dyadic=\cup_{j\in \mathbb N} \dyadic_j$ is a dyadic decomposition on the boundary  of the $2$-regular tree in Section \ref{s-besov}. 

Instead of only considering the above dyadic energy on the boundary of a $2$-regular tree, we introduce a general dyadic energy $\dlbesove$ in Definition \ref{besov-3},  defined on the boundary of any regular tree and for any $0\leq\theta<1$.
It is natural to ask whether the Besov-type space $\lbesov$ in Definition \ref{besov-3} defined via  the $\dlbesove$-energy is a trace space of a suitable Sobolev space defined on the regular tree.  We refer to \cite{Ar,FJS,Ga,HS10,J00,Pe,T83,T01,Tyu1,JoWa80,KTW,Tyu2} for trace results on Euclidean spaces and to \cite{BBGS,SS,Ma} for trace results on metric measure spaces.

  In \cite{BBGS},  the trace spaces of the Newtonian spaces $N^{1,p}(X)$ on regular trees were shown to be Besov spaces defined via  double integrals. Our first result is the following generalization of this theorem.


\begin{thm}\label{th2} Let $X$ be a $K$-ary tree with $K\geq 2$. Fix $\beta>\log K$, $\epsilon>0$ and $\lambda\in \real$. Suppose that $p\geq 1$ and $p>(\beta-\log K)/\epsilon$. Then the Besov-type space $\lbesov$ is the trace space of $N^{1, p}(X, \mu_\lbd)$ whenever $\theta=1-(\beta-\log K)/\epsilon p$.
\end{thm}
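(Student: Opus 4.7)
The plan is to establish the two standard ingredients of a trace theorem: a bounded trace operator $\Tr : N^{1,p}(X,\mu_\lambda) \to \lbesov$ and a bounded extension operator $\Ex : \lbesov \to N^{1,p}(X,\mu_\lambda)$ satisfying $\Tr\circ \Ex = \Id$.

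For the trace direction, given $u \in N^{1,p}(X,\mu_\lambda)$, I would set $\Tr u(\xi) := \lim_{i\to\infty} u(v_{I_i(\xi)})$, where $I_i(\xi)$ is the unique cell in $\dyadic_i$ containing $\xi$ and $v_I$ denotes the tree vertex naturally associated with the dyadic cell $I$. A Fuglede/Fubini argument applied to the upper gradient $g_u$ shows that this limit exists $\nu$-a.e.\ on $\partial X$; the hypothesis $p>(\beta-\log K)/\epsilon$ is what makes $g_u$ integrable along $\nu$-almost every geodesic from the root to $\partial X$. The core estimate is to bound $|g_I - g_{\widehat I}|$, with $g_I := \dashint_I \Tr u\,d\nu$, first by $|u(v_I) - u(v_{\widehat I})|$ up to a telescoping remainder and then by $\int_{[v_{\widehat I}, v_I]} g_u\,ds$. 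Applying Hölder's inequality on each edge, accounting for $\mu_\lambda$, the branching factor $K$, and the edge length $\epsilon$, and summing over $i$ and $I\in \dyadic_i$ yields $\|\Tr u\|_{\lbesov}\lesssim \|u\|_{N^{1,p}(X,\mu_\lambda)}$ for precisely $\theta = 1-(\beta-\log K)/(\epsilon p)$.

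For the extension, I would define $\Ex g(v_I) := g_I$ at every vertex and interpolate linearly on each edge of $X$. Then an upper gradient of $\Ex g$ on the edge joining $v_{\widehat I}$ and $v_I$ is comparable to $|g_I - g_{\widehat I}|/\epsilon$, so its $\mu_\lambda$-weighted $L^p$-norm decomposes as a sum over levels $i$ and cells $I\in \dyadic_i$. The weighted edge contribution at level $i$ scales as $K^{-i}\,e^{-(\beta-\log K)i}$ up to a constant depending on $\epsilon$, and the chosen value of $\theta$ makes this sum comparable to $\sum_i i^\lambda \sum_{I\in\dyadic_i}|g_I - g_{\widehat I}|^p = \|g\|_{\dlbesove}^p$. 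The $L^p(X,\mu_\lambda)$-norm of $\Ex g$ is controlled by $\|g\|_{L^p(\partial X)}$ via Jensen's inequality, and $\Tr\circ\Ex = \Id$ follows from the Lebesgue differentiation theorem on $\partial X$ together with comparability of $\nu(I)$ and $K^{-i}$ for $I\in \dyadic_i$.

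The main obstacle I expect is the bookkeeping needed to match the two scalings precisely: one must track how $\mu_\lambda$, the branching factor $K^i$, the edge length $\epsilon$, and the logarithmic weight $i^\lambda$ combine at each level, and verify that $\theta = 1-(\beta-\log K)/(\epsilon p)$ is the unique exponent producing the correct cancellation on both sides of the inequality. A secondary difficulty is the rigorous identification of $g_I$ with $u(v_I)$ modulo a telescoping correction dominated by the upper gradient on the subtree rooted at $v_I$; here the hypothesis $\beta > \log K$ enters crucially, guaranteeing that the tail sum over the $K$-many children at each level, weighted by $\mu_\lambda$, is absolutely convergent.
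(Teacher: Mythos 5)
Your proposal follows essentially the same route as the paper: the trace is defined as the limit along geodesic rays (equivalently, along the associated vertex sequence), its existence and its dyadic energy are controlled by tail sums of upper--gradient integrals over edges via H\"older's inequality and Fubini's theorem, and the extension is the Whitney-type operator that assigns the mean value $u_{I_x}$ to each vertex $x$ and interpolates linearly on edges. The only caveats are sketch-level normalization slips: the $\dlbesove$-energy carries the factor $e^{\epsilon n\theta p}\nu(I)\approx e^{n(\epsilon p-\beta)}$ rather than reducing to $\sum_n n^\lambda\sum_{I\in\dyadic_n}|g_I-g_{\widehat I}|^p$ (that reduction occurs only when $\epsilon p=\beta$), the gradient on a level-$n$ edge is $\approx e^{\epsilon n}|g_I-g_{\widehat I}|$ rather than $|g_I-g_{\widehat I}|/\epsilon$, and the convergence of the tail sums in the energy estimate comes from $\theta>0$ (i.e.\ $p>(\beta-\log K)/\epsilon$, used through an auxiliary exponent $0<\kappa<\theta$) rather than from $\beta>\log K$ alone --- all of which wash out in the bookkeeping you describe.
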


The measure $\mul$ above is defined in \eqref{measure-lbd} by
$$d\mu_\lambda(x)=e^{-\beta|x|}(|x|+C)^\lbd\, d|x|,$$
and the space $N^{1, p}(X, \mu_\lbd)$ is a Newtonian space defined in Section \ref{Newtonian-space}. If $\lambda=0$, then $N^{1, p}(X, \mul)=N^{1,p}(X)$ and Theorem \ref{th2} recovers the trace results from \cite{BBGS} for the Newtonian spaces $N^{1, p}(X)$.
 Here and throughout this paper, for given Banach spaces $\mathbb X(\bx)$ and $\mathbb Y(X)$, we say that the space $\mathbb X(\bx)$ is a trace space of $\mathbb Y(X)$ if and only if there is a bounded linear operator $T: \mathbb Y(X)\rightarrow \mathbb X(\bx)$ and there exists a bounded linear extension operator  $E: \mathbb X(\bx)\rightarrow \mathbb Y(X)$ that acts as a right inverse of $T$, i.e., $T\circ E=\Id$ on the space $\mathbb X(\bx)$.

We required in Theorem \ref{th2} that $p>(\beta-\log K)/\epsilon>0$. The assumption that    $\beta-\log K>0$ is necessary in the sense that we need to make sure that the measure $\mu_\lambda$ on  $X$ is doubling; see Section \ref{section-measure}. The requirement that  $p>(\beta-\log K)/\epsilon$ will ensure that $\theta>0$. So it is natural to consider the case  $p=(\beta-\log K)/\epsilon\geq 1$. 
\begin{thm}\label{th3}
 Let $X$ be a $K$-ary tree with $K\geq 2$. Fix $\beta>\log K$, $\epsilon>0$ and $\lambda\in \real$. Suppose that $p=(\beta-\log K)/\epsilon\geq 1$ and $\lambda>p-1$ if $p>1$ or $\lambda\geq 0$ if $p=1$. Then there is a bounded linear trace operator $T: N^{1, p}(X, \mu_\lambda)\rightarrow L^p(\bx)$, defined via limits along geodesic rays. Here, $\lambda>p-1$ is sharp  in the sense that for any $p>1$, $\delta>0$ and $\lambda=p-1-\delta$, there exists a function $u\in \Newton$ so that $Tu(\xi)=\infty$ for every $\xi\in \bx$.

Moreover, for any $p=(\beta-\log K)/\epsilon\geq 1$, there exists a bounded nonlinear extension operator $E: L^p(\bx)\rightarrow N^{1, p}(X)$ that acts as a right inverse of the trace operator $T$ above, i.e., $T\circ E=\Id$ on $L^p(X)$.
\end{thm}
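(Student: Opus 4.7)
The proof decomposes into (i) boundedness of the trace operator $T$, (ii) sharpness of $\lbd > p-1$, and (iii) construction of a nonlinear bounded extension $E$. For (i), define $Tu(\xi) := \lim_{n \to \infty} v_n(\xi)$ with $v_n(\xi) := u(\gamma_\xi(n))$, the value of $u$ at the level-$n$ vertex on the geodesic ray from the root to $\xi$. Since $u$ is absolutely continuous along $\nu$-a.e.\ ray, the arc-length element $ds = e^{-\epsilon|x|}\, d|x|$ yields $|v_{n+1}(\xi) - v_n(\xi)| \le \int_n^{n+1} g_u(\gamma_\xi(t))\, e^{-\epsilon t}\, dt$. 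Jensen on the unit interval, Fubini across $\bx$ and the level-$n$ edges, and the shadow estimate $\nu(U_x) \asymp K^{-n}$ for $|x| = n$ give
\[
\int_{\bx} |v_{n+1} - v_n|^p\, d\nu \;\lesssim\; e^{-\epsilon p n} K^{-n} \sum_{|x| = n+1} g_u(x)^p \;\asymp\; (n + C)^{-\lbd}\, E_n,
\]
where $E_n$ denotes the level-$n$ contribution to $\|g_u\|_{L^p(\mul)}^p$ and we used the critical-exponent cancellation $K^{-n} e^{-\epsilon p n} = e^{-\beta n}$ (valid since $p = (\beta - \log K)/\epsilon$). Minkowski followed by H\"older in $n$ produces
\[
\|Tu - u(0)\|_{\lpb}^p \;\lesssim\; \Big(\sum_n (n + C)^{-\lbd/(p-1)}\Big)^{p-1}\, \|g_u\|_{L^p(\mul)}^p,
\]
and the series converges precisely when $\lbd > p - 1$. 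The constant $u(0)$ is controlled by $\|u\|_{\Newton}$ via a local Poincar\'e inequality near the root; the case $p = 1$ with $\lbd \ge 0$ replaces H\"older by the crude bound $(n+C)^{-\lbd} \le C^{-\lbd}$.

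For (ii), let $\lbd = p - 1 - \delta$ with $\delta > 0$ and take the radial function $u(x) := \log((|x| + C)/C)$. Its minimal upper gradient in the metric is $g_u(x) = e^{\epsilon|x|}/(|x| + C)$, and $u(\gamma_\xi(t)) \to \infty$ for every $\xi \in \bx$. Summing over the $K^t$ edges at level $t$ and using $K^t e^{-\beta t} = e^{-\epsilon p t}$ gives $\int_X g_u^p\, d\mul \asymp \int_0^\infty (t + C)^{\lbd - p}\, dt$, finite precisely when $\lbd < p - 1$, so $u \in \Newton$ with $Tu \equiv +\infty$. For (iii), define $Eg(x) := \dashint_{U_x} g\, d\nu$ at each vertex $x \in X$ with shadow $U_x \subset \bx$ and extend $Eg$ linearly along edges. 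The Lebesgue differentiation theorem on the doubling space $(\bx, \nu)$ gives $Eg(\gamma_\xi(n)) \to g(\xi)$ for $\nu$-a.e.\ $\xi$, so $T \circ E = \Id$. The pointwise bound $|Eg(x) - Eg(\hat x)|^p \le \dashint_{U_x} |g - (g)_{U_{\hat x}}|^p\, d\nu$, together with Fubini and the same critical-exponent cancellation, yields $\|g_{Eg}\|_{L^p(X)}^p \lesssim \|g\|_{\lpb}^p$; a $g$-dependent truncation far from the root secures $\|Eg\|_{L^p(X)} < \infty$ in the unweighted setting and is the source of the nonlinearity.

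The main obstacle is the critical exponent: since $p = (\beta - \log K)/\epsilon$ exactly, the conjugate weight $e^{\beta t/(p-1)}$ diverges at infinity, so no direct continuous H\"older estimate on $\int g_u(\gamma_\xi(t))\, e^{-\epsilon t}\, dt$ can succeed. Only after discretizing at each tree level and invoking the numerical identity $K^{-n} e^{-\epsilon p n} = e^{-\beta n}$ does the analysis reduce to the elementary summability test $\sum (n + C)^{-\lbd/(p-1)} < \infty$, whose boundary $\lbd = p - 1$ coincides with the sharpness threshold witnessed by the radial example in (ii).
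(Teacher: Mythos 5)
Your parts (i) and (ii) are correct and follow essentially the same route as the paper: telescoping along rays, H\"older in the level index $n$ against the weight $(n+C)^{-\lambda/(p-1)}$ (whose summability is exactly $\lambda>p-1$), Fubini with the shadow estimate $\nu(E(x))\approx K^{-n}$ and the critical cancellation $K^{-n}e^{-\epsilon pn}=e^{-\beta n}$; and the logarithmic radial function for sharpness is the same example as the paper's.

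Part (iii) has a genuine gap. For the Whitney-type averaging extension $Eg(x)=\dashint_{I_x}g\,d\nu$, the computation you describe (Fubini plus the critical-exponent cancellation) does \emph{not} give $\|g_{Eg}\|^p_{L^p(X)}\lesssim\|g\|^p_{\lpb}$; it gives
$$\int_X|g_{Eg}|^p\,d\mu\approx\sum_{n=1}^\infty\sum_{I\in\dyadic_n}\nu(I)\,|g_I-g_{\widehat I}|^p,$$
i.e.\ the dyadic $\dot{\mathcal B}^{0,0}_p$-energy of $g$, and this is not controlled by $\|g\|_{\lpb}$ when $1\le p<2$. Writing $d_n=E_ng-E_{n-1}g$ for the martingale differences of $g$ with respect to $\dyadic$, the right-hand side is $\sum_n\|d_n\|_p^p$, and one can choose $g=\sum_i\sigma_i a_i$ (a Rademacher-type series in the digits of $\xi$, as in the paper's Example 3.12) with $\sum a_i^2<\infty$ but $\sum a_i^p=\infty$; then $g\in L^2(\bx)\subset\lpb$ while $\sum_n\|d_n\|_p^p=\sum_n a_n^p=\infty$. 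In particular for $p=1$, taking $a_i=1/i$ gives $g\in L^1(\bx)$ whose Whitney extension has non-integrable gradient. So the operator you propose is \emph{not} bounded from $\lpb$ to $N^{1,p}(X)$ for $1\le p<2$ --- this is exactly the phenomenon isolated in the paper's Theorem 1.4, which shows that the natural domain of the Whitney extension is the strictly smaller space ${\mathcal B}^{0,\lambda}_p(\bx)$. Your remark that a ``$g$-dependent truncation far from the root'' is the source of nonlinearity also misses the point: the $L^p$-norm of $Eg$ is automatically finite (the geometric series $\sum_n e^{-(\beta-\log K)n}$ handles it); the obstruction is the gradient.

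What is actually needed --- and what the paper does --- is a Gagliardo-type layered construction: approximate $g$ in $\lpb$ by Lipschitz functions $f_k$ with $\|f_{k+1}-f_k\|_{\lpb}\le 2^{2-k}\|g\|_{\lpb}$, apply the Whitney extension to each $f_k$ (for Lipschitz data the level-$n$ energy decays geometrically, Lemma 3.5), and glue the extensions with cutoffs supported on annular layers $\{[\rho_k]\le|x|\le[\rho_{k+1}]\}$ whose depths $\rho_k$ are chosen so that $\sum_k\rho_k\LIP(f_k,\bx)\lesssim\|g\|_{\lpb}$. The dependence of both the approximating sequence and the layer depths on $g$ is the true source of the nonlinearity, and without some device of this kind the extension claim for $1\le p<2$ fails.
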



A result similar to Theorem \ref{th3} for the weighted Newtonian space 
$N^{1, p}(\Omega, \omega\, d\mu)$ with a suitable weight $\omega$ has been established in \cite{Ma} provided that $\Omega$ is a bounded domain that admits a 
$p$-Poincar\'e inequality and whose boundary $\partial \Omega$ is endowed with a $p$-co-dimensional Ahlfors regular measure. In Theorem \ref{th3}, for the 
case $p=(\beta-\log K)/\epsilon> 1$, we require that $\lambda>p-1$ to ensure 
the existence of limits along geodesic rays. In the case $p=(\beta-\log K)/\epsilon= 1$,  these limits  exist even for $\lambda=0$, and there is a nonlinear extension operator that acts as a right inverse of the trace operator,  similarly to the case of $W^{1, 1}$ in Euclidean setting; see \cite{Ga,Pe}.

However, except for the case $p=1$ and $\lambda=0$, Theorem \ref{th3} does not even tell whether the trace operator $T$ is surjective or not: $N^{1, p}(X, \mul)$ is a strict subset of $N^{1, p}(X)$ when $\lambda>0$. In the case $p=(\beta-\log K)/\epsilon= 1$ and  $\lambda>0$, the trace operator $T$ is actually not 
surjective, and we can find a Besov-type space  $\lbesovo$ (see Definition \ref{besov-4}) which is the trace space of the Newtonian space $N^{1, 1}(X, \mul)$. We stress that  $\lbesovo$ and ${\mathcal B}^{0, \lambda}_1(\bx)$ are different spaces. More precisely, ${\mathcal B}^{0, \lambda}_1(\bx)$ is a strict subspace of $\lbesovo$, see Proposition \ref{compare} and Example \ref{exam-strict}.

\begin{thm}\label{th5} Let $X$ be a $K$-ary tree with $K\geq 2$. Fix $\beta>\log K$, $\epsilon>0$ and $\lambda >0$.
Suppose that $p=1=(\beta-\log K)/\epsilon$. Then the trace space of $N^{1,1}(X, \mu_\lambda)$ is the Besov-type space $\lbesovo$.
\end{thm}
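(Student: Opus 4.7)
The plan is to establish Theorem \ref{th5} by proving the two directions required by the trace-space definition: (i) the trace operator $T$ (already available from Theorem \ref{th3} as a bounded map into $L^1(\bx)$) in fact maps $N^{1,1}(X,\mu_\lambda)$ boundedly into $\lbesovo$; and (ii) there is a bounded linear extension operator $E\colon\lbesovo\to N^{1,1}(X,\mu_\lambda)$ with $T\circ E=\Id$. The critical identity $\beta-\log K=\epsilon$ (i.e.\ $p=1$) is what makes the dyadic differences on $\bx$ match edge-contributions on $X$ in a sharp way, while the condition $\lambda>0$ is what replaces the usual smoothness gain $\theta>0$.

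For the trace direction, I would start from $u\in N^{1,1}(X,\mu_\lambda)$ with an upper gradient $g_u$. For each dyadic $I\in\dyadic_i$, let $v_I$ be the vertex of $X$ of level $i$ whose descendants' boundary equals $I$. First replace the average $(Tu)_I$ by $u(v_I)$ using the Lebesgue differentiation of $Tu$ along geodesics (this is the content of Theorem \ref{th3} for $p=1$) together with a telescoping identity that writes $(Tu)_I-u(v_I)$ as a sum over descendants. Then $(Tu)_I-(Tu)_{\widehat I}$ is controlled by $|u(v_I)-u(v_{\widehat I})|$ plus Poincar\'e-type remainders, and $|u(v_I)-u(v_{\widehat I})|\le\int_{[v_{\widehat I},v_I]}g_u\,d|x|$. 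Multiplying by the correct weight and summing, the product of the edge length, the factor $e^{-\beta|x|}(|x|+C)^\lambda$ coming from $\mu_\lambda$, and the number $K^i\approx e^{(\beta-\epsilon)i}$ of descendants, collapses to exactly $(i+C)^\lambda$ because $\epsilon=\beta-\log K$. This identifies $\sum_i(i+C)^\lambda\sum_{I\in\dyadic_i}|(Tu)_I-(Tu)_{\widehat I}|$ as bounded by $\int_Xg_u\,d\mu_\lambda$, which is the $\dlbesovoe$-seminorm control; combined with the $L^1$ bound from Theorem \ref{th3}, this gives $\|Tu\|_{\lbesovo}\lesssim\|u\|_{N^{1,1}(X,\mu_\lambda)}$.

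For the extension, given $g\in\lbesovo$ I would define $Eg$ on vertices by $Eg(v_I)=g_I$ and linearly interpolate on each edge (this is linear in $g$). The minimal upper gradient of $Eg$ on the edge between $v_I$ and $v_{\widehat I}$ is essentially $|g_I-g_{\widehat I}|/\epsilon$, so $\int_Xg_{Eg}\,d\mu_\lambda$ reduces to a sum of $|g_I-g_{\widehat I}|$ against the mass $\mu_\lambda$ of the edge at level $i$. The same balance between $K^i$, the exponential factor, and $(|x|+C)^\lambda$ that appeared in the trace direction now runs backwards and produces exactly $\sum_i(i+C)^\lambda\sum_{I\in\dyadic_i}|g_I-g_{\widehat I}|$. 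The $L^1(X,\mu_\lambda)$ bound for $Eg$ itself comes from summing $|g_{I_0}|+\sum|g_I-g_{\widehat I}|$ against the measure of the subtree rooted at each $v_I$, using $\lambda>0$ to make these tails absolutely summable. Finally, $T\circ Eg=g$ a.e.\ follows from the Lebesgue differentiation theorem on $\bx$: on a Lebesgue point $\xi$ of $g$, the geodesic-limit value of $Eg$ at $\xi$ is $\lim_{i\to\infty}g_{I_i(\xi)}=g(\xi)$.

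The main obstacle is the sharp matching of the boundary dyadic energy with the tree energy at the borderline $p=1$. Unlike the case $p>1$, where Poincar\'e errors are absorbed by a positive smoothness exponent $\theta$, here one must carefully exploit the $(|x|+C)^\lambda$ factor (with $\lambda>0$) to absorb the remainder terms from the telescoping; this is also exactly why the resulting space is $\lbesovo$ and not the (strictly smaller) space $\mathcal B^{0,\lambda}_1(\partial X)$. A second technical point is that, at $p=1$, the trace must be defined via pointwise geodesic limits rather than via $N^{1,1}$-convergence, so care is needed to interchange the geodesic limit with the boundary averages $(Tu)_I$; this is where Theorem \ref{th3}'s geodesic-limit definition of $T$ is essential.
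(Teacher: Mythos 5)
Your proposal proves the wrong theorem in both directions, and the gap is not repairable by absorbing error terms: the space you actually work with is the all-levels dyadic space $\mathcal B^{0,\lambda}_1(\bx)$ (energy $\sum_n n^\lambda\sum_{I\in\dyadic_n}\nu(I)|f_I-f_{\widehat I}|$, consecutive levels, parent $\widehat I$), whereas $\lbesovo$ is defined only along a lacunary sequence of levels $\alpha(n)$ with $\alpha(n+1)/\alpha(n)\ge c_0>1$, comparing $f_I$ with $f_{\widetilde I}$ at level $\alpha(n-1)$. These spaces are genuinely different: $\mathcal B^{0,\lambda}_1(\bx)\subsetneq\lbesovo$ (Proposition \ref{compare} and Example \ref{exam-strict}). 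In the trace direction your claimed bound $\sum_i (i+C)^\lambda\sum_{I\in\dyadic_i}|(Tu)_I-(Tu)_{\widehat I}|\lesssim\int_X g_u\,d\mul$ is false. The reason is quantitative: after telescoping, $|(Tu)_I-(Tu)_{\widehat I}|$ for $I\in\dyadic_i$ is controlled by the averaged gradient over \emph{all} edges at depths $j\ge i-1$ below $I$, not just the single edge $[v_{\widehat I},v_I]$; the ``Poincar\'e remainders'' are an infinite tail of the same order, not lower-order corrections. Each edge at depth $j$ therefore receives total weight $\sum_{i\le j+1}i^\lambda\approx j^{\lambda+1}$, while the measure $\mul$ only supplies $j^\lambda$, so the all-levels sum loses a factor of $j$. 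The lacunary structure is exactly what rescues this: $\sum_{n:\,\alpha(n)\le j}\alpha(n)^\lambda\lesssim j^\lambda$ because the $\alpha(n)$ grow geometrically, and this is the computation the paper's proof runs. Concretely, the function $f$ of Example \ref{exam-strict} lies in the trace space (it is in $\lbesovo$, hence extends to $N^{1,1}(X,\mul)$) but has $\|f\|_{\dot{\mathcal B}^{0,\lambda}_1(\bx)}=+\infty$, so no inequality of the form you assert can hold.

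The extension direction has the dual defect. Your operator (boundary averages at every vertex, linear interpolation on every edge) is precisely the Whitney-type operator of Theorems \ref{th2} and \ref{th4}, and for $p=1$, $\theta=0$ it satisfies the two-sided comparison $\int_X|g_{\tilde u}|\,d\mul\approx\|u\|_{\dot{\mathcal B}^{0,\lambda}_1(\bx)}$ (equation \eqref{optimal1}). Hence it is bounded only on the strictly smaller space $\mathcal B^{0,\lambda}_1(\bx)$ and is \emph{unbounded} on $\lbesovo$: applied to the $f$ of Example \ref{exam-strict} it produces a function with infinite gradient energy. The paper's extension for Theorem \ref{th5} is different in an essential way: it assigns boundary averages only at the lacunary levels $\alpha(n)$, keeps $\tilde u$ constant along the geodesic from a level-$\alpha(n)$ vertex down to the parent of its level-$\alpha(n+1)$ descendant, and concentrates the entire increment $u_{I_y}-u_{\widetilde I_y}$ on that single last edge; this yields gradient mass $\approx e^{(\epsilon-\beta)\alpha(n+1)}\alpha(n+1)^\lambda|u_{I_y}-u_{\widetilde I_y}|$ per block and hence exactly the $\dlbesovoe$-energy. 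Spreading the variation over every edge, as you do, inevitably picks up the larger all-levels energy. So both halves of your argument would prove (at best) that $\mathcal B^{0,\lambda}_1(\bx)$ is the trace space, which contradicts the theorem and the paper's counterexample.
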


Trace results similar to Theorem \ref{th5} in the Euclidean setting can be found in \cite{Gin84,Tyu2}.
The second part of Theorem \ref{th3} asserts the existence of bounded nonlinear extension operator from $L^{p}(\bx)$ to $N^{1,p}(X)$ whenever $p=(\beta-\log K)/\epsilon\geq 1$. Nonlinearity is natural here since results due to 
Peetre \cite{Pe} (also see \cite{BG79}) indicate that, for $p=1$ and $\lambda=0$, one can not find a 
bounded linear extension operator that acts as a right inverse of the trace 
operator in Theorem \ref{th3}. On the other hand, the recent work \cite{MSS} gives the existence of a bounded linear extension operator $E$ from a certain Besov-type space to $BV$ or to $N^{1,1}$ such that $T\circ E$ is the identity 
operator on this Besov-type space, under the assumption that the domain 
satisfies the co-dimension $1$ Ahlfors-regularity. 
The extension operator in \cite{MSS} is a version of the Whitney extension operator. This motivates us to further analyze the operator $E$ from Theorem \
\ref{th2}: it is also of Whitney type.
The co-dimension $1$ Ahlfors-regularity does not hold for our regular tree $(X, \mul)$, but we are still able to establish the following result for $N^{1, p}(X, \mul)$ with $p\geq 1$ for our fixed extension operator $E$.

\begin{thm}\label{th4} Let $X$ be a $K$-ary tree with $K\geq 2$. Fix $\beta>\log K$, $\epsilon>0$ and $\lambda\in \real$. 
Suppose that $p=(\beta-\log K)/\epsilon\geq 1$ and $\lambda>p-1$ if $p>1$ or $\lambda\geq 0$ if $p=1$. Then the operator $E$ from Theorem \ref{th2} is a bounded linear extension operator from ${\mathcal B}^{0, \lambda}_p(\bx)$ to $N^{1, p}(X, \mul)$  and acts as a right inverse of $T$, i.e., $T\circ E$ is the identity operator on ${\mathcal B}^{0, \lambda}_p(\bx)$, where $T$ is the trace operator  in Theorem \ref{th3}. 

Moreover, 
the space ${\mathcal B}^{0, \lambda}_p(\bx)$ is the optimal space for which $E$ is both bounded and linear, i.e., if  $\mathbb X\subset L^1_{\rm loc}(\bx)$  is a Banach space so that the extension operator $E: \mathbb X\rarrow N^{1, p}(X, \mul)$ is bounded and linear and so that $T\circ E$ is the identity operator on $\mathbb X$, then $\mathbb X$ is a subspace of ${\mathcal B}^{0, \lambda}_p(\bx)$.
\end{thm}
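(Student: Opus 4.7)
The plan is to analyze the Whitney-type extension operator $E$ from the proof of Theorem \ref{th2} on the larger domain $\mathcal B^{0,\lambda}_p(\bx)$, and then to establish the optimality via a trace-type inequality that exploits the linearity of any competing extension only mildly.

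For the first assertion, I would recall that $E$ is built by assigning $E(g)(x_I):=g_I$ at each vertex $x_I$ corresponding to a dyadic cell $I\subset\bx$ and extending linearly on each edge $e=[x_{\hat I},x_I]$; linearity in $g$ is transparent from the linearity of the averaging $g\mapsto g_I$. The natural upper gradient of $E(g)$ is constant on each such edge with value $|g_I-g_{\hat I}|/\mathrm{len}(e)$. Integrating the $p$-th power of this upper gradient against $d\mu_\lambda(x)=e^{-\beta|x|}(|x|+C)^\lambda d|x|$ edge by edge, the critical relation $p=(\beta-\log K)/\epsilon$ balances the $\mu_\lambda$-measure of an edge at level $i$ against the cardinality $K^i$ of such edges, leaving
$$\|E(g)\|_{N^{1,p}(X,\mu_\lambda)}^p\lesssim\sum_i i^\lambda\sum_{I\in\dyadic_i}|g_I-g_{\hat I}|^p=\|g\|_{\mathcal B^{0,\lambda}_p}^p.$$
For the identity $T\circ E=\Id$, I would invoke a Lebesgue differentiation argument on $(\bx,\nu)$: since every $g\in\mathcal B^{0,\lambda}_p\subset L^1_{\rm loc}(\bx)$ satisfies $g_{I_n(\xi)}\to g(\xi)$ for $\nu$-a.e. $\xi$, where $I_n(\xi)$ is the generation-$n$ cell containing $\xi$, and since $g_{I_n(\xi)}=E(g)(x_{I_n(\xi)})$ at the vertex on the geodesic ray to $\xi$, the limit defining $T(E(g))(\xi)$ coincides with $g(\xi)$ almost everywhere.

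For optimality, let $\tilde E:\mathbb X\to N^{1,p}(X,\mu_\lambda)$ be any bounded linear extension with $T\circ\tilde E=\Id$ on $\mathbb X$, and set $u:=\tilde E(g)$ with minimal $p$-weak upper gradient $h$. For each dyadic cell $I$ at level $i$ with associated vertex $x_I$, I would first establish $|g_I-u(x_I)|\lesssim\dashint_I|g(\xi)-u(x_I)|\,d\nu(\xi)$ and control the right-hand side by a geodesic-ray Hardy/Poincar\'e-type inequality involving the integral of $h$ along rays in the subtree rooted at $x_I$, using the almost-everywhere identity $g(\xi)=\lim_n u(\gamma_\xi(t_n))$ supplied by the trace definition. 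Combining the telescoping decomposition $g_I-g_{\hat I}=(g_I-u(x_I))+(u(x_I)-u(x_{\hat I}))+(u(x_{\hat I})-g_{\hat I})$ with the upper-gradient bound $|u(x_I)-u(x_{\hat I})|\leq\int_{[x_{\hat I},x_I]}h\,ds$, I would raise to the $p$-th power, weight by $i^\lambda$, sum over $I$ and $i$, and conclude $\|g\|_{\mathcal B^{0,\lambda}_p}^p\lesssim\|h\|_{L^p(X,\mu_\lambda)}^p\lesssim\|g\|_{\mathbb X}^p$.

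The main obstacle is the bookkeeping for the tail terms $|g_I-u(x_I)|^p$: summing them against the weight $i^\lambda$ requires a weighted Hardy-type inequality along geodesic rays, and I expect the hypothesis $\lambda>p-1$ (respectively $\lambda\geq 0$ when $p=1$) to enter precisely here, reflecting the same sharpness that guarantees the existence of the trace in Theorem \ref{th3}. A secondary point is verifying that the main term $\sum_i i^\lambda\sum_I|u(x_I)-u(x_{\hat I})|^p$ is controlled by $\|h\|_{L^p(X,\mu_\lambda)}^p$; after applying Jensen's inequality $(\int_e h\,ds)^p\leq\mathrm{len}(e)^{p-1}\int_e h^p\,ds$ this should reduce to the same edge-by-edge balance used in the proof of Theorem \ref{th2}.
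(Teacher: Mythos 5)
Your treatment of the first assertion — linearity of $E$, the edge-by-edge computation of $\int_X|g_{Eu}|^p\,d\mul$ at the critical exponent $p=(\beta-\log K)/\epsilon$, and the Lebesgue-point argument for $T\circ E=\Id$ — is exactly the paper's argument (one slip: the dyadic energy $\|u\|^p_{\dot{\mathcal B}^{0,\lambda}_p(\bx)}$ carries the factor $\nu(I)\approx K^{-i}$ on level $i$, which is precisely what the balance $e^{(\epsilon p-\beta)i}=K^{-i}$ produces; your displayed sum omits it).

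The optimality part has a genuine gap, rooted in a misreading of the statement. The theorem asserts optimality only for the \emph{fixed} Whitney-type operator $E$: if this particular $E$ maps a Banach space $\mathbb X$ boundedly into $\Newton$, then $\mathbb X\subset{\mathcal B}^{0,\lambda}_p(\bx)$. You instead take an arbitrary bounded linear right inverse $\tilde E$ of $T$ and try to prove the trace inequality $\|Tu\|_{\dot{\mathcal B}^{0,\lambda}_p(\bx)}\lesssim\|u\|_{\Newton}$ for all $u\in\Newton$. That inequality is false in general: for $p=1$ and $\lambda>0$, Theorem \ref{th5} together with Proposition \ref{compare} and Example \ref{exam-strict} exhibit a trace of an $N^{1,1}(X,\mul)$ function lying in $\lbesovo$ but not in ${\mathcal B}^{0,\lambda}_1(\bx)$, and the extension operator in the proof of Theorem \ref{th5} is itself linear and bounded on $\lbesovo$ — so under your reading the optimality claim would actually be \emph{false}, not merely hard. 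Concretely, the bookkeeping for your tail terms $|g_I-u(x_I)|^p$ cannot close: at the endpoint $\theta=0$ the geometric decay $r_j^{(\theta-\kappa)p}$ that makes the analogous double summation converge in the trace part of Theorem \ref{th2} is unavailable, and the level-$j$ contribution is counted with total weight $\sum_{i\le j}i^\lambda\approx j^{\lambda+1}$ instead of $j^\lambda$. The hypothesis $\lambda>p-1$ does not rescue this; in the paper it is used only to make $T:\Newton\to L^p(\bx)$ bounded.

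The intended argument is much shorter and is exactly where the specific form of $E$ enters: since $g_{Eu}$ equals the constant $|u_I-u_{\widehat I}|/d_X(x_{\widehat I},x_I)$ on each edge, the edge-by-edge computation yields the \emph{two-sided} estimate $\int_X|g_{Eu}|^p\,d\mul\approx\|u\|^p_{\dot{\mathcal B}^{0,\lambda}_p(\bx)}$, not merely an upper bound. Hence if $E:\mathbb X\to\Newton$ is bounded, then $\|u\|^p_{\dot{\mathcal B}^{0,\lambda}_p(\bx)}\lesssim\|Eu\|^p_{\Newton}\lesssim\|u\|^p_{\mathbb X}$, while the $L^p$ part follows from $\|u\|_{L^p(\bx)}=\|T(Eu)\|_{L^p(\bx)}\lesssim\|Eu\|_{\Newton}$ by Proposition \ref{prop3.1} — the only place where $\lambda>p-1$ (resp.\ $\lambda\ge 0$) is needed. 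No trace inequality for general Sobolev functions is required, and indeed none is available at this endpoint.
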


The optimality of the space ${\mathcal B}^{0, \lambda}_p(\bx)$ is for the explicit extension operator $E$  in Theorem \ref{th4}.  The space 
${\mathcal B}^{0, \lambda}_p(\bx)$ may not be the optimal space unless we 
consider this particular extension operator. For example, for $p=1$ and $\lambda>0$, the optimal space is $\lbesovo$ rather than ${\mathcal B}^{0, \lambda}_1$ by Theorem \ref{th5}. This splitting happens since the two extension operators from Theorem \ref{th5} and Theorem \ref{th4} are very different: the latter one is of Whitney type while the former one relies on the same dyadic elements for several different dyadic layers. 

The paper is organized as follows. In Section \ref{s2}, we give all the preliminaries for the proofs. More precisely, we introduce regular trees in Section \ref{s-regular} and we consider the doubling condition on a regular tree $X$ and the Hausdorff dimension of its boundary $\bx$.  We introduce the Newtonian spaces on $X$ and the Besov-type spaces on $\bx$ in Section \ref{Newtonian-space} and Section \ref{s-besov}, respectively. In Section \ref{proofs}, we give the proofs of all the above mentioned theorems, one by one. 

In what follows, the letter $C$ denotes a constant that may change at different occurrences. The notation $A\approx B$ means that there is a constant $C$ such that $1/C\cdot A\leq B\leq C\cdot A$. The notation $A\lesssim B$ ($A\gtrsim B$) means that there is a constant $C$ such that  $A\leq  C\cdot B$ ($A\geq C\cdot B$).

\section{Preliminaries}\label{s2}
\subsection{Regular trees and their boundaries}\label{s-regular}
A {\it graph} $G$ is a pair $(V, E)$, where $V$ is a set of vertices and $E$ is a set of edges.   We call a pair of vertices $x, y\in V$  neighbors if $x$ is connected to $y$ by an edge. The degree of a vertex is the number of its neighbors. The graph structure gives rise to a natural connectivity structure. A {\it tree} is a connected graph without cycles. A graph (or tree) is made into a metric graph by considering each edge as a geodesic of length one.

We call a tree $X$ a {\it rooted tree} if it has a distinguished vertex called the {\it root}, which we will denote by $0$. The neighbors of a vertex $x\in X$ are of two types: the neighbors that are closer to the root are called {\it parents} of $x$ and all other  neighbors  are called {\it children} of $x$. Each vertex has a unique parent, except for the root itself that has none. 

A {\it $K$-ary tree}  is a rooted tree such that each vertex has exactly $K$ children. Then all vertices except the root  of  a $K$-ary tree have degree $K+1$, and the root has degree $K$. In this paper we say that a tree is {\it regular} if it is a $K$-ary tree for some $K\geq 1$.

For $x\in X$, let $|x|$ be the distance from the root $0$ to $x$, that is, the length of the geodesic from $0$ to $x$, where the length of every edge is $1$ and we consider each edge to be an isometric copy of the unit interval.  The geodesic connecting two vertices $x, y\in V$ is denoted by $[x, y]$, and its length is denoted $|x-y|$. If $|x|<|y|$ and $x$ lies on the geodesic connecting $0$ to $y$, we write $x<y$ and call the vertex $y$ a descendant of the vertex $x$. More generally, we write $x\leq y$ if the geodesic from $0$ to $y$ passes through $x$, and in this case $|x-y|=|y|-|x|$.

Let $\epsilon>0$ be fixed. We introduce a {\it uniformizing metric} (in the sense of Bonk-Heinonen-Koskela \cite{BHK01}, see also \cite {BBGS} ) on $X$ by setting
\begin{equation}\label{metric}
d_X(x, y)=\int_{[x, y]} e^{-\epsilon|z|}\, d\,|z|.
\end{equation}
Here $d\,|z|$ is the measure which gives each edge Lebesgue measure $1$, as we consider each edge to be an isometric copy of the unit interval and the vertices are the end points of this interval. In this metric, $\diam X=2/\epsilon$ if $X$ is a $K$-ary tree with $K\geq 2$.  

Next we construct the boundary of the regular $K$-ary tree by following the arguments in \cite[Section 5] {BBGS}. We define the boundary of a tree $X$, denoted $\bx$, by completing $X$ with respect to the metric $d_X$. An equivalent construction of $\bx$ is as follows. An element $\xi$ in $\bx$ is identified with an infinite geodesic in $X$ starting at the root $0$. Then we may denote $\xi=0x_1x_2\cdots$, where $x_i$ is a vertex in $X$ with $|x_i|=i$, and $x_{i+1}$ is a child of $x_i$. Given two points $\xi, \zeta\in \bx$, there is an infinite geodesic $[\xi, \zeta]$ connecting $\xi$ and $\zeta$. Then the distance of $\xi$ and $\zeta$ is the length (with respect to the metric $d_X$) of the infinite geodesic $[\xi, \zeta]$. More precisely, if $\xi=0x_1x_2\cdots$ and $\zeta=0y_1y_2\cdots$, let $k$ be an integer with $x_k=y_k$ and $x_{k+1}\not=y_{k+1}$. Then by \eqref{metric}
$$d_X(\xi, \zeta)=2\int_{k}^{+\infty} e^{-\epsilon t}\, d t=\frac 2\epsilon e^{-\epsilon k}.$$
The restriction of $d_X$ to $\bx$ is called the {\it visual metric} on $\bx$ in Bridson-Haefliger \cite{BH99}.

The metric $d_X$ is thus defined on $\bar X$. To avoid confusion, points in $X$ are denoted by Latin letters such as $x, y$ and $z$, while for points in $\bx$ we use Greek letters such as $\xi, \zeta$ and $\omega$. Moreover, balls in $X$ will be denoted $B(x, r)$, while $B(\xi, r)$ stands for a ball in $\bx$.

Throughout the paper we assume that $1\leq p<+\infty$ and that $X$ is a $K$-ary tree with $K\geq 2$ and metric $d_X$ defined as in \eqref{metric}.
 
\subsection{Doubling condition on  $X$ and Hausdorff dimension of $\bx$}\label{section-measure}
The first aim of this section is to show that the weighted measure 
\begin{equation}\label{measure-lbd}
d\mu_\lambda(x)=e^{-\beta|x|}(|x|+C)^\lbd\, d|x|
\end{equation}
is doubling on $X$, where $\beta>\log K$, $\lambda\in \mathbb R$ and $C\geq\max\{2|\lambda|/(\beta-\log K), 2(\log 4)/\epsilon\}$ are fixed from now on. Here the lower bound of the constant $C$ will make the estimates below simpler. If $\lambda=0$, then 
$$d\mu_0(x)= e^{-\beta|x|}\, d|x|=d\mu(x),$$
which coincides with the measure used in \cite{BBGS}. If $\beta\leq \log K$, then $\mul(X)=\infty$ for the regular $K$-ary tree $X$ by \eqref{beta} below. Hence $X$ would not be doubling as $X$ is bounded.

Next we estimate the measures of balls in $X$ and show that our measure is doubling. Let 
$$B(x, r)=\{y\in X: d_X(x, y)<r\}$$
denote an open ball in $X$ with respect to the metric $d_X$. Also let
$$F(x, r)=\{y\in X: y\geq x\ \text{and}\ \ d_X(x, y)<r\}$$
denote the downward directed ``half ball''.

The following algebraic lemma and the relation between a ball and a ``half ball'' come from \cite[Lemma 3.1 and 3.2]{BBGS}.
\begin{lem}\label{algebra}
Let $\sigma>0$ and $t\in [0, 1]$. Then
$$\min\{1, \sigma\} t\leq 1-(1-t)^\sigma\leq \max\{1, \sigma\} t.$$
\end{lem}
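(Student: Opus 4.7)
The plan is to split into the cases $\sigma\geq 1$ and $0<\sigma\leq 1$ so that $\min\{1,\sigma\}$ and $\max\{1,\sigma\}$ become explicit constants ($1$ and $\sigma$ in the first case, $\sigma$ and $1$ in the second), and then to reduce the four resulting one-sided inequalities to two elementary facts about the power function on $[0,1]$.

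First I would use the comparison of $y\mapsto y^\sigma$ with the identity on $[0,1]$: convexity gives $y^\sigma\leq y$ when $\sigma\geq 1$, while concavity gives $y^\sigma\geq y$ when $0<\sigma\leq 1$ (both also follow trivially from monotonicity in the exponent, since $0\leq y\leq 1$). Substituting $y=1-t$ immediately yields one of the desired inequalities in each case: for $\sigma\geq 1$ we get $1-(1-t)^\sigma\geq t=\min\{1,\sigma\}\,t$, and for $0<\sigma\leq 1$ we get $1-(1-t)^\sigma\leq t=\max\{1,\sigma\}\,t$.

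For the remaining two inequalities I would use the Fundamental Theorem of Calculus. Setting $g(t)=1-(1-t)^\sigma$, we have $g(0)=0$ and $g'(u)=\sigma(1-u)^{\sigma-1}$. For $u\in[0,t]\subset[0,1]$, the factor $(1-u)^{\sigma-1}$ is bounded above by $1$ when $\sigma\geq 1$ and bounded below by $1$ when $0<\sigma\leq 1$. Integrating from $0$ to $t$ then gives
$$g(t)=\int_0^t\sigma(1-u)^{\sigma-1}\,du\leq \sigma t=\max\{1,\sigma\}\,t\quad\text{when }\sigma\geq 1,$$
and the reversed inequality $g(t)\geq \sigma t=\min\{1,\sigma\}\,t$ when $0<\sigma\leq 1$. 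This completes the chain of inequalities in both cases.

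This is a one-variable calculus statement, so I do not anticipate any substantive obstacle; the only thing requiring care is the book-keeping that matches the correct direction of inequality with the correct side of the target chain in each of the two cases, together with checking the boundary values $t=0$ and $t=1$ to be sure the monotonicity arguments cover the full interval.
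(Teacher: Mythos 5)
Your proof is correct. Note that the paper itself does not prove this lemma --- it is imported verbatim from \cite[Lemma~3.1]{BBGS} --- so there is no in-paper argument to compare against; your case split on $\sigma\geq 1$ versus $0<\sigma\leq 1$, combined with the comparison $y^\sigma$ vs.\ $y$ on $[0,1]$ for two of the bounds and the integral representation $1-(1-t)^\sigma=\int_0^t\sigma(1-u)^{\sigma-1}\,du$ for the other two, is the standard and complete way to do it. The only point worth a passing remark is that for $0<\sigma<1$ the integrand $\sigma(1-u)^{\sigma-1}$ is unbounded near $u=1$, so at $t=1$ the integral is improper; it converges (to $1$) and the bound $1\geq\sigma$ holds trivially there, which your final check of the endpoint $t=1$ covers.
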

\begin{lem}\label{relation-measure}
For every $x\in X$ and $r>0$ we have
$$F(x, r)\subset B(x, r)\subset F(z, 2r),$$
where $z\leq x$ and 
\begin{equation}\label{relationeq}
|z|=\max\left\{|x|-\frac 1\epsilon \log(1+\epsilon r e^{\epsilon |x|}), 0\right\}.
\end{equation}
\end{lem}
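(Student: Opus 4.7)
The inclusion $F(x,r) \subset B(x,r)$ is immediate from the definitions, so all of the work is in the second inclusion. My plan is to reduce everything to the explicit antiderivative of the weight $e^{-\epsilon t}$ along ancestor lines, together with the tree-geodesic identity
\begin{equation*}
d_X(a, b) = d_X(a, w) + d_X(w, b),
\end{equation*}
where $w = w(a, b)$ is the branching point (common ancestor closest to $a$ and $b$). This identity is a direct consequence of the fact that the geodesic between any two points in the tree goes up to their common ancestor and then back down.

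First I would compute $d_X(v, x)$ for any ancestor $v \leq x$: integrating \eqref{metric} along the geodesic from $v$ to $x$ gives $d_X(v, x) = \tfrac{1}{\epsilon}\bigl(e^{-\epsilon|v|} - e^{-\epsilon|x|}\bigr)$. Solving $d_X(v, x) = r$ for $|v|$ yields exactly $|v| = |x| - \tfrac{1}{\epsilon}\log\bigl(1 + \epsilon r e^{\epsilon|x|}\bigr)$; when this quantity is nonnegative, the corresponding ancestor of $x$ is the point $z$ from \eqref{relationeq} and $d_X(z, x) = r$. If the formula is negative then $r > d_X(0, x)$ already, and the convention $z = 0$ gives $d_X(z, x) < r$.

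Now fix $y \in B(x, r)$ and let $w$ be the branching point of $x$ and $y$. Since $d_X(x, w) \leq d_X(x, y) < r \leq d_X(x, z)$ and the function $v \mapsto d_X(v, x)$ is strictly increasing as $v$ moves from $x$ toward $0$ along the ancestor line, one obtains $|w| \geq |z|$, i.e., $z \leq w$ in the tree order. Combined with $w \leq y$, this gives $z \leq y$ and the triangle inequality yields
\begin{equation*}
d_X(z, y) \leq d_X(z, x) + d_X(x, y) < r + r = 2r,
\end{equation*}
so $y \in F(z, 2r)$. The only real obstacle is bookkeeping: one must consistently translate between the tree-order relation $v \leq x$ and the metric inequality $d_X(v, x) \leq r$, and separate the generic case from the truncated case $z = 0$. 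Nothing deeper than monotonicity of the weight integral is needed.
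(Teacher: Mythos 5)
Your proof is correct. Note that the paper does not actually prove this lemma---it is quoted directly from Bj\"orn--Bj\"orn--Gill--Shanmugalingam \cite[Lemmas 3.1 and 3.2]{BBGS}---so there is no in-paper argument to compare against; your computation of $d_X(v,x)=\tfrac1\epsilon\bigl(e^{-\epsilon|v|}-e^{-\epsilon|x|}\bigr)$ along the ancestor line, the identification of $z$ as the ancestor at distance exactly $r$ (or the root, when $r\ge d_X(0,x)$), and the branching-point identity $d_X(x,y)=d_X(x,w)+d_X(w,y)$ giving $z\le w\le y$ and $d_X(z,y)<2r$ is precisely the standard argument. The only caveat is that your inequality $r\le d_X(x,z)$ holds only in the non-truncated case, but since $z=0\le y$ is automatic in the truncated case and $d_X(z,x)\le r$ holds in both, the case split you flag at the end closes this cleanly.
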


We begin to estimate the measure of the ball $B(x, r)$ and of the half ball $F(z, r)$.
\begin{lem}\label{lF}
If $0<r\leq e^{-\epsilon|z|}/\epsilon$, then
$$\mu_\lambda(F(z, r))\approx e^{(\epsilon-\beta)|z|}r(|z|+C)^\lambda.$$
 \end{lem}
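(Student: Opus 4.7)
The plan is to parametrize $F(z,r)$ by level sets and reduce the computation to a one-dimensional integral. Since $y\ge z$ forces the geodesic from $z$ to $y$ to be monotone in $|\cdot|$, formula \eqref{metric} yields $d_X(z,y)=\frac{1}{\epsilon}(e^{-\epsilon|z|}-e^{-\epsilon|y|})$, so with $R:=\epsilon re^{\epsilon|z|}\in(0,1]$ the condition $d_X(z,y)<r$ is equivalent to $|y|<h:=|z|-\frac{1}{\epsilon}\log(1-R)$. Because each vertex of $X$ has $K$ children, for $t\in(|z|+n,|z|+n+1)$ the level set $\{y\ge z:|y|=t\}$ lies on $K^{n+1}$ descending edges of unit length, and a Fubini splitting gives
\begin{equation*}
\mul(F(z,r))\approx K^{-|z|}\int_{|z|}^{h}e^{-(\beta-\log K)t}(t+C)^\lambda\,dt = e^{-|z|\log K}\int_{|z|}^{h}e^{-\beta' t}(t+C)^\lambda\,dt,
\end{equation*}
with $\beta':=\beta-\log K>0$. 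The claim will follow once the inner integral is shown to be comparable to $re^{\epsilon|z|}e^{-\beta'|z|}(|z|+C)^\lambda$, since the prefactors combine via $\epsilon-\log K-\beta'=\epsilon-\beta$.

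To estimate the integral I would split on the size of $R$. If $R\le 1/2$, the elementary bound $R\le -\log(1-R)\le 2R$ gives $h-|z|\approx re^{\epsilon|z|}$, and the lower bound $C\ge 2\log 4/\epsilon$ guarantees that both $(t+C)^\lambda\approx(|z|+C)^\lambda$ and $e^{-\beta' t}\approx e^{-\beta'|z|}$ uniformly on $[|z|,h]$; the integral is then essentially the integrand at $|z|$ times the length of the interval, giving exactly the desired expression. If $R\ge 1/2$, then both $re^{\epsilon|z|}$ and $h-|z|$ are comparable to the absolute constant $1/\epsilon$ (the former from above, the latter from below), so it suffices to establish the two-sided bound
\begin{equation*}
\int_{|z|}^{\infty}e^{-\beta' t}(t+C)^\lambda\,dt\approx e^{-\beta'|z|}(|z|+C)^\lambda,
\end{equation*}
and complement it with the matching lower bound coming from the subinterval $[|z|,|z|+\log 2/\epsilon]\subset[|z|,h]$.

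The only step requiring care is the tail estimate above, and this is exactly where the assumption $C\ge 2|\lambda|/\beta'$ enters. After the substitution $s=t+C$, a single integration by parts produces the boundary term $\beta'^{-1}e^{-\beta'|z|}(|z|+C)^\lambda$ plus a remainder of size at most $|\lambda|/(\beta'(|z|+C))$ times the original integral; the choice of $C$ forces this factor to be at most $1/2$, so the remainder can be absorbed, giving matching upper and lower bounds for all values of $\lambda\in\real$. I expect this absorption to be the main obstacle, but it is routine once the correct lower bound on $C$ is in hand, after which the two regimes combine to yield the claimed comparability.
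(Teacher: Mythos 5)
Your argument is correct and follows essentially the same route as the paper: both reduce $\mul(F(z,r))$ to the one\-/dimensional integral $K^{-|z|}\int_{|z|}^{|z|+\rho}e^{(\log K-\beta)t}(t+C)^\lambda\,dt$ via the level-set count $K^{t-|z|}$, and both exploit the lower bound $C\geq 2|\lambda|/(\beta-\log K)$ to absorb the perturbation caused by the factor $(t+C)^\lambda$. The only divergence is in the final estimate of that integral: the paper compares with the exact antiderivative and applies Lemma \ref{algebra} to $1-(1-\epsilon r e^{\epsilon|z|})^{c(\beta-\log K)/\epsilon}$ so as to treat all $0<\epsilon r e^{\epsilon|z|}\leq 1$ at once, whereas you split into the regimes $\epsilon r e^{\epsilon|z|}\leq 1/2$ (integrand essentially constant on $[|z|,h]$) and $\epsilon r e^{\epsilon|z|}\geq 1/2$ (integration by parts plus absorption for the tail), which works equally well.
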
 
 \begin{proof}
 
Let $\rho>0$ be such that 
$$\int_{|z|}^{|z|+\rho} e^{-\epsilon t}\, dt=\frac{1}{\epsilon}e^{-\epsilon|z|}(1-e^{-\epsilon \rho})=r.$$
Note that for each $|z|\leq t\leq |z|+\rho$, the number of points $y\in F(z, r)$ with $|y|=t$ is approximately $K^{t-|z|}$. Hence
\begin{align}
\mul(F(z, r))\approx \int_{|z|}^{|z|+\rho} K^{t-|z|}e^{-\beta t}(t+C)^\lambda\, dt=K^{-|z|}\int_{|z|}^{|z|+\rho} e^{(\log K-\beta)t}(t+C)^\lambda\, dt.\label{beta}
\end{align}
Since
$$\left(\frac{1}{\log K-\beta} e^{(\log K-\beta)t}(t+C)^\lambda\right)'=e^{(\log K-\beta)t}(t+C)^\lambda\left(1+ \frac{\lambda}{(t+C)(\log K-\beta)}\right),$$
then for $C\geq 2|\lambda|/(\beta-\log K)$, we have
$$ \left|\frac{\lambda}{(t+C)(\log K-\beta)}\right| \leq \frac 12 \ \ \ \forall \ t>0.$$
Hence we obtain that
\begin{equation}\label{est-Fz}
\mul(F(z, r))\approx \frac{K^{-|z|}}{\beta-\log K} e^{(\log K-\beta)|z|}(|z|+C)^\lambda\left(1-e^{(\log K-\beta)\rho}\left(\frac{|z|+\rho+C}{|z|+C}\right)^\lambda\right).
\end{equation}
It is easy to check that for any $\rho>0$ and $z\in X$, we have that
$$1\leq\frac{|z|+\rho+C}{|z|+C}\leq\frac{\rho+C}{C}\leq e^{\rho/C}.$$
Therefore,
$$e^{-\frac{|\lambda|}{C}\rho}\leq \left(\frac{|z|+\rho+C}{|z|+C}\right)^\lambda\leq e^{\frac{|\lambda|}{C}\rho}\ \ \ \forall \ z\in X, \rho>0.$$
Since $C\geq 2|\lambda|/(\beta-\log K)$, we obtain that
\begin{equation}\label{C-condition}
e^{\frac 12(\log K-\beta) \rho} \leq \left(\frac{|z|+\rho+C}{|z|+C}\right)^\lambda \leq e^{-\frac 12(\log K-\beta) \rho}\ \ \ \forall \ z\in X,  \rho >0.
\end{equation}
Then for any $z\in X$ and $\rho>0$,
$$e^{(\log K-\beta)\rho}\left(\frac{|z|+\rho+C}{|z|+C}\right)^\lambda\approx e^{c(\log K-\beta)\rho}, \ \text {for some }\   \frac12\leq c\leq \frac32.$$
Hence we obtain that
\begin{align*}
\mul(F(z, r))&\approx \frac{K^{-|z|}}{\beta-\log K} e^{(\log K-\beta)|z|}(|z|+C)^\lambda \left(1-e^{c(\log K-\beta)\rho}\right)\\
&= \frac{e^{-\beta|z|}}{\beta-\log K}(|z|+C)^\lambda \left(1-(1-\epsilon r e^{\epsilon|z|})^{c (\beta-\log K)/\epsilon}\right)
\end{align*}
for some $c\in [1/2, 3/2]$. Lemma \ref{algebra} with $t=\epsilon r e^{\epsilon|z|}$ implies that
$$\mul(F(z, r))\approx e^{-\beta |z|}(|z|+C)^\lambda\epsilon r e^{\epsilon|z|}\approx e^{(\epsilon-\beta)|z|}r(|z|+C)^\lambda.$$
\end{proof}

\begin{cor}\label{r-small}
If $0<r\leq e^{-\epsilon|x|}/\epsilon$, then
$$\mu_\lambda(B(x, r))\approx e^{(\epsilon-\beta)|x|}r(|x|+C)^\lambda\approx e^{(\epsilon-\beta)|x|}r(|z|+C)^\lambda.$$
 \end{cor}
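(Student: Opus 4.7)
The plan is to use Lemma \ref{relation-measure} to sandwich the ball $B(x,r)$ between the downward half-balls $F(x,r)$ and $F(z,2r)$, and then estimate each via Lemma \ref{lF}. The lower bound is immediate: the inclusion $F(x,r) \subset B(x,r)$, together with the hypothesis $r \leq e^{-\epsilon|x|}/\epsilon$, which is exactly the admissible range for Lemma \ref{lF} at the point $x$, gives
$$\mu_\lambda(B(x,r)) \geq \mu_\lambda(F(x,r)) \approx e^{(\epsilon-\beta)|x|}\,r\,(|x|+C)^\lambda.$$

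For the upper bound, I would use $B(x,r) \subset F(z,2r)$ and apply Lemma \ref{lF} to the right-hand side. The first thing to verify is the admissibility condition $2r \leq e^{-\epsilon|z|}/\epsilon$. Plugging the defining relation \eqref{relationeq} into the exponential, in the non-degenerate case $|z|>0$ one finds $e^{-\epsilon|z|} = e^{-\epsilon|x|}(1+\epsilon r e^{\epsilon|x|}) = e^{-\epsilon|x|} + \epsilon r$, hence
$$\frac{e^{-\epsilon|z|}}{\epsilon} = \frac{e^{-\epsilon|x|}}{\epsilon} + r \geq 2r$$
by the hypothesis on $r$. The degenerate case $|z|=0$ forces $|x| \leq (\log 2)/\epsilon$, so all the quantities involved are already comparable to absolute constants and the argument adapts directly. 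Lemma \ref{lF} then yields
$$\mu_\lambda(B(x,r)) \leq \mu_\lambda(F(z,2r)) \approx e^{(\epsilon-\beta)|z|}\,r\,(|z|+C)^\lambda.$$

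The final step, which also establishes the second $\approx$ in the statement, is to show that $e^{(\epsilon-\beta)|z|}(|z|+C)^\lambda \approx e^{(\epsilon-\beta)|x|}(|x|+C)^\lambda$. From \eqref{relationeq} one reads off $0 \leq |x|-|z| \leq (\log 2)/\epsilon$, so the exponential factor $e^{(\epsilon-\beta)(|x|-|z|)}$ is bounded above and below by constants depending only on $\epsilon$ and $\beta$. The main subtle point, and really the only delicate step in the proof, is the polynomial comparison: here the standing lower bound $C \geq 2(\log 4)/\epsilon$ enters essentially, since it guarantees $|z|+C \geq (|x|+C) - (\log 2)/\epsilon \geq \tfrac{1}{2}(|x|+C)$, whence $(|z|+C)^\lambda \approx (|x|+C)^\lambda$ uniformly in $|x|$ and regardless of the sign of $\lambda$. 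Combining the upper and lower bounds with this comparison yields both equivalences in the statement.
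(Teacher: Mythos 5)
Your argument is essentially the paper's own proof: sandwich $B(x,r)$ between $F(x,r)$ and $F(z,2r)$ via Lemma \ref{relation-measure}, apply Lemma \ref{lF} to each half ball, and use $0\le |x|-|z|\le (\log 2)/\epsilon$ together with the standing lower bound on $C$ to compare the factors evaluated at $|x|$ and at $|z|$. In particular, your verification $e^{-\epsilon|z|}/\epsilon = e^{-\epsilon|x|}/\epsilon + r \ge 2r$ in the case $|z|>0$ is exactly the computation in the paper, and the comparison $(|z|+C)^\lambda\approx(|x|+C)^\lambda$ for either sign of $\lambda$ is handled correctly.

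The one place where you are too quick is the degenerate case $z=0$. There the admissibility condition for Lemma \ref{lF} reads $2r\le e^{-\epsilon\cdot 0}/\epsilon=1/\epsilon$, and this can genuinely fail under the hypothesis $r\le e^{-\epsilon|x|}/\epsilon$ (take $x=0$ and $r=1/\epsilon$, so that $2r=2/\epsilon$); hence Lemma \ref{lF} cannot be applied to $F(0,2r)$ as stated, and ``the argument adapts directly'' is an assertion rather than a proof. The paper sidesteps this by replacing $F(0,2r)$ with $F(0,r+\rho)$, where $\rho=d_X(0,x)=(1-e^{-\epsilon|x|})/\epsilon$: one checks that $z=0$ forces $\rho\le r$, while $r\le e^{-\epsilon|x|}/\epsilon$ gives $r+\rho\le 1/\epsilon$, so Lemma \ref{lF} applies and yields $\mu_\lambda(F(0,r+\rho))\approx r+\rho\approx r$. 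Alternatively, your route is easily repaired by splitting off the sub-case $2r>1/\epsilon$, in which $r\gtrsim 1$ and the trivial bound $\mu_\lambda(B(x,r))\le\mu_\lambda(X)\lesssim 1\approx r$ suffices; with that patch the proof is complete.
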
 
\begin{proof}
For any $x\in X$ and $0<r\leq e^{-\epsilon|x|}/\epsilon$, let $z$ be as in Lemma \ref{relation-measure}. If $z=0$, then $B(x, r)\subset F(0, r+\rho)$, where 
$$\rho=d_X(0, x)= \frac{1}{\epsilon}(1-e^{-\epsilon|x|})\leq r$$
and $r+\rho \leq 1/\epsilon = e^{-\epsilon |z|}/\epsilon$. For $z>0$ we have
$$2r\leq \frac{e^{-\epsilon|x|}(1+\epsilon r e^{\epsilon |x|})}{\epsilon}=\frac{e^{-\epsilon |z|}}{\epsilon}.$$
Moreover, in both cases, since $r<e^{-\epsilon|x|}/\epsilon$, by Lemma \ref{relation-measure}, we have
$$|z|\leq |x|\leq |z|+\frac 1\epsilon \log(1+\epsilon r e^{\epsilon |x|})\leq |z|+\frac 1\epsilon \log 2,$$
which implies 
\begin{equation}\label{compareeq}
\left(\frac{|x|+C}{|z|+C}\right)^\lambda\approx 1.
\end{equation}
Combing \eqref{compareeq} with the fact that in both cases $1\leq e^{|x|-|z|}\leq (1+\epsilon r e^{\epsilon |x|})^{1/\epsilon}\approx 1$, the result follows by applying Lemma \ref{lF} to $F(x, r)$ and $F(z, 2r)$ (or $F(0, r+\rho)$ for $z=0$).
\end{proof}

\begin{lem}\label{geodesic}
Let $z\in X$ and $x\in \overline X$ with $z\leq x$. Then
$$\mul([z, x])\approx \mul(F(z, d_X(z, x))).$$
where $[z, x]$ denotes the geodesic in the tree $X$ joining $x$ and $z$.
\end{lem}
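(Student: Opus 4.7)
The plan is to express both sides as explicit integrals in the graph-distance variable $|y|$ and show each takes the form $e^{-\beta|z|}(|z|+C)^\lambda\cdot(1-e^{-c\rho})$ up to multiplicative constants, where $\rho := |x|-|z|$ and $c>0$ depends only on $\beta,\log K,\epsilon$. Since $[z,x]$ is parametrized by arc length (with $|y|$ increasing monotonically from $|z|$ to $|x|$), the definition \eqref{measure-lbd} of $\mul$ immediately yields
$$\mul([z,x])=\int_{|z|}^{|x|} e^{-\beta t}(t+C)^\lambda\,dt.$$

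To estimate this, I would repeat the antiderivative trick from the proof of Lemma~\ref{lF}: differentiating $-\tfrac{1}{\beta}e^{-\beta t}(t+C)^\lambda$ produces an integrand that differs from $e^{-\beta t}(t+C)^\lambda$ by the factor $1-\tfrac{\lambda}{\beta(t+C)}$, which lies in $[1/2,3/2]$ thanks to the standing assumption $C\geq 2|\lambda|/(\beta-\log K)\geq 2|\lambda|/\beta$. Therefore
$$\mul([z,x])\approx \frac{e^{-\beta|z|}(|z|+C)^\lambda}{\beta}\left(1-e^{-\beta\rho}\left(\frac{|x|+C}{|z|+C}\right)^\lambda\right).$$
Using \eqref{C-condition} to sandwich $\bigl(\tfrac{|x|+C}{|z|+C}\bigr)^\lambda$ between $e^{\pm(\beta-\log K)\rho/2}$, the product $e^{-\beta\rho}\bigl(\tfrac{|x|+C}{|z|+C}\bigr)^\lambda$ lies between $e^{-(3\beta-\log K)\rho/2}$ and $e^{-(\beta+\log K)\rho/2}$; both exponents are strictly negative precisely because $\beta>\log K>0$, so
$$\mul([z,x])\approx e^{-\beta|z|}(|z|+C)^\lambda\,(1-e^{-c_1\rho})$$
for some $c_1>0$ depending only on $\beta$ and $\log K$.

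For the half-ball side, observe that $d_X(z,x)=\epsilon^{-1}e^{-\epsilon|z|}(1-e^{-\epsilon\rho})\leq\epsilon^{-1}e^{-\epsilon|z|}$, so Lemma~\ref{lF} applies and gives
$$\mul(F(z,d_X(z,x)))\approx e^{(\epsilon-\beta)|z|}(|z|+C)^\lambda\, d_X(z,x)=\epsilon^{-1}\,e^{-\beta|z|}(|z|+C)^\lambda\,(1-e^{-\epsilon\rho}).$$
The elementary fact that for any fixed positive constants $c_1,c_2$ the functions $\rho\mapsto 1-e^{-c_i\rho}$ are pairwise comparable on $(0,\infty]$ (both behave like $c_i\rho$ near $0$ and tend to $1$ at infinity) then closes the argument, with the case $x\in\bx$ corresponding to $\rho=\infty$ as a limit. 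I anticipate no serious obstacle: the main bookkeeping is tracking multiplicative constants through the antiderivative estimate and verifying that every exponential rate produced above is strictly positive, which is exactly what $\beta>\log K$ guarantees.
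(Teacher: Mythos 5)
Your proof is correct and follows essentially the same route as the paper: both arguments rest on the antiderivative estimate $\int_{|z|}^{|x|}e^{-\beta t}(t+C)^\lambda\,dt\approx \beta^{-1}e^{-\beta|z|}(|z|+C)^\lambda\bigl(1-e^{-\beta\rho}(\tfrac{|x|+C}{|z|+C})^\lambda\bigr)$ together with Lemma \ref{lF}. The only cosmetic difference is that the paper obtains the upper bound $\mul([z,x])\leq\mul(F(z,d_X(z,x)))$ for free from the inclusion $[z,x]\subset F(z,d_X(z,x))$ and only computes the lower bound, whereas you derive both bounds from the explicit asymptotics and the comparability of $\rho\mapsto 1-e^{-c\rho}$ for different $c>0$.
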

\begin{proof} Since $[z, x]$ is a subset of $F(z, d_X(z, x))$ by definition, we have $\mul([z, x])\leq \mul(F(z, d_X(z, x)))$. Hence it remains to show that
$$\mul([z, x])\gtrsim \mul(F(z, d_X(z, x))).$$
For any $z\in X$ and $x\in \overline X$ with $z\leq x$, we have that
$$\mul([z, x])=\int_{|z|}^{|x|} e^{-\beta t}(t+C)^\lambda\, dt,$$
where $|x|=\infty$ if $x\in \partial X$. Then
by using an  argument similar to the estimate in Lemma \ref{lF}, since $C\geq2|\lambda|/(\beta-\log K) \geq 2|\lambda|/\beta$, we have that
$$\left|\frac{\lambda}{(t+C)\beta}\right|\leq \frac12\ \ \ \forall \ t\geq 0,$$
which implies that for any $t\geq 0$,
$$ \left(-\frac 1\beta e^{-\beta t}(t+C)^\lbd\right)'=e^{-\beta t}(t+C)^\lambda\left(1-\frac{\lambda}{\beta(t+C)}\right) \approx e^{-\beta t}(t+C)^\lambda.$$
Hence we obtain that
\begin{equation}\label{est-geo}
\int_{|z|}^{|x|} e^{-\beta t}(t+C)^\lambda\, dt\approx \frac{e^{-\beta|z|}}{\beta}(|z|+C)^\lambda\left(1-e^{-\beta(|x|-|z|)}\left(\frac{|x|+C}{|z|+C}\right)^\lambda\right).
\end{equation}
Comparing the estimate \eqref{est-geo} with the estimate \eqref{est-Fz}, since $\rho=|x|-|z|$, $e^{\rho \log K}\geq 1$ and $K^{-|z|}e^{(\log K-\beta)|z|}=e^{-\beta|z|}$, we have that
$$
\int_{|z|}^{|x|} e^{-\beta t}(t+C)^\lambda\, dt\gtrsim \mul(F(z, r))\ \ {\rm with}\ \ r=d_X(z, x),
$$
which induces that
\begin{equation*}
\mul([z, x])\gtrsim \mul(F(z, r))=\mul(F(z, d_X(z, x))).
\end{equation*}

\end{proof}

\begin{cor}\label{r-middle}
Let $x\in X$ and $z$ be as in Lemma \ref{relation-measure}. Then if 
\begin{equation}\label{est-r}
\frac{e^{-\epsilon |x|}}{\epsilon}\leq r\leq \frac{1}{\epsilon}(1-e^{-\epsilon|x|}),
\end{equation}
we obtain 
$$\mul(B(x, r))\approx e^{-\beta |z|}(|z|+C)^\lambda \approx r^{\beta/\epsilon}(|z|+C)^\lambda.$$
\end{cor}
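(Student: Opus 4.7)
The plan is to apply Lemma \ref{relation-measure} to sandwich $B(x,r)$ between the half-balls $F(x,r)$ and $F(z,2r)$, but to replace the (too small) lower piece $F(x,r)$ by the geodesic segment $[z,x]$, whose measure is controlled via Lemma \ref{geodesic}. The key geometric observation is that the definition \eqref{relationeq} of $z$ in this range gives $d_X(z,x) = r$ and hence $[z,x] \subset \overline{B(x,r)}$; moreover the identity $e^{-\epsilon|z|}/\epsilon = e^{-\epsilon|x|}/\epsilon + r$ combined with the assumption $e^{-\epsilon|x|}/\epsilon\leq r \leq (1-e^{-\epsilon|x|})/\epsilon$ forces
$$\frac{e^{-\epsilon|z|}}{2\epsilon}\leq r <\frac{e^{-\epsilon|z|}}{\epsilon},$$
so in particular $r\approx e^{-\epsilon|z|}$, which is exactly what is needed to pass between the two equivalent forms of the conclusion.

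For the lower bound, $[z,x]\subset\overline{B(x,r)}$ yields $\mul(B(x,r))\geq\mul([z,x])$, and Lemma \ref{geodesic} gives $\mul([z,x])\approx\mul(F(z,d_X(z,x)))=\mul(F(z,r))$. Since $r<e^{-\epsilon|z|}/\epsilon$, Lemma \ref{lF} applies and produces
$$\mul(F(z,r))\approx e^{(\epsilon-\beta)|z|}r(|z|+C)^\lambda\approx e^{-\beta|z|}(|z|+C)^\lambda,$$
using $r\approx e^{-\epsilon|z|}$.

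For the upper bound, the inclusion $B(x,r)\subset F(z,2r)$ reduces matters to estimating $\mul(F(z,2r))$. The main obstacle is that $2r$ may exceed $e^{-\epsilon|z|}/\epsilon$, in which case Lemma \ref{lF} does not apply verbatim; I would split into two subcases. If $2r\leq e^{-\epsilon|z|}/\epsilon$, Lemma \ref{lF} gives $\mul(F(z,2r))\approx e^{(\epsilon-\beta)|z|}\cdot 2r\cdot(|z|+C)^\lambda\approx e^{-\beta|z|}(|z|+C)^\lambda$. Otherwise $F(z,2r)$ consists of all descendants of $z$, and the integral representation \eqref{beta} with $\rho=+\infty$, together with the same argument using \eqref{C-condition} that was used in the proof of Lemma \ref{lF}, yields
$$\mul(F(z,2r))\approx\frac{e^{-\beta|z|}(|z|+C)^\lambda}{\beta-\log K}.$$

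Combining the two bounds gives $\mul(B(x,r))\approx e^{-\beta|z|}(|z|+C)^\lambda$, and the second equivalence $\mul(B(x,r))\approx r^{\beta/\epsilon}(|z|+C)^\lambda$ follows from $r\approx e^{-\epsilon|z|}$, i.e., $e^{-\beta|z|}\approx r^{\beta/\epsilon}$.
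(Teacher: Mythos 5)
Your proposal is correct and follows essentially the same route as the paper: the lower bound via $\mul(B(x,r))\geq\mul([z,x])\gtrsim\mul(F(z,r))$ using Lemma \ref{geodesic} and Lemma \ref{lF}, the upper bound via the inclusion into a half-ball centered at $z$, and the observation $\epsilon r\approx e^{-\epsilon|z|}$ to pass between the two forms of the conclusion. The only cosmetic difference is that the paper handles the upper bound in one stroke by writing $B(x,r)\subset F(z,\infty)=F(z,e^{-\epsilon|z|}/\epsilon)$ and applying Lemma \ref{lF} at that radius, which avoids your case split on whether $2r$ exceeds $e^{-\epsilon|z|}/\epsilon$.
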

\begin{proof}
Since $r\geq e^{-\epsilon|x|}/\epsilon$, by Lemma \ref{relation-measure}, we have
$$B(x, r)\subset F(z, \infty)=F\left(z, \frac{e^{-\epsilon |z|}}{\epsilon}\right).$$
Then Lemma \ref{lF} implies
\begin{equation}\label{les}
\mul(B(x, r))\leq \mul(F(z, \infty))\lesssim e^{(\epsilon-\beta)|z|}e^{-\epsilon|z|}(|z|+C)^\lambda\approx e^{-\beta |z|}(|z|+C)^\lambda
\end{equation}

Towards the another direction,  by \eqref{relationeq} and Lemma \ref{geodesic},  we have that
$$\mul(B(x, r))\geq \mul([x, z])\gtrsim \mu(F(z, r))=e^{(\epsilon-\beta)|z|}r(|z|+C)^\lambda=e^{-\beta|z|}(|z|+C)^\lambda e^{\epsilon|z|}r.$$
Moreover, we have
\begin{align*}
e^{\epsilon|z|}r=e^{\epsilon|x|} r \cdot e^{-\epsilon(|x|-|z|)}=e^{\epsilon|x|} r(1+\epsilon r e^{\epsilon |x|})^{-1}= \frac{t}{\epsilon(1+t)}\geq \frac{1}{2\epsilon},
\end{align*}
where  $t=\epsilon r e^{\epsilon |x|}$. Here in the last inequality we used the fact that $\epsilon r e^{\epsilon |x|}\geq 1$.
Hence we obtain that
$$\mul(B(x, r))\gtrsim e^{-\beta|z|}(|z|+C)^\lambda.$$
Combing the above inequality with \eqref{les}, we finish the proof of 
$$\mul(B(x, r))\approx e^{-\beta|z|}(|z|+C)^\lambda.$$

Since $\epsilon r e^{\epsilon |x|}\geq 1$, we know that 
$$\epsilon r e^{\epsilon |x|} \leq1+\epsilon r e^{\epsilon |x|}\leq 2\epsilon r e^{\epsilon |x|}.$$
It then follows from \eqref{relationeq} that
$$e^{-\beta|z|}=e^{-\beta|x|}(1+\epsilon r e^{\epsilon |x|})^{\beta/\epsilon}\approx  r^{\beta/\epsilon}.$$
Hence we obtain that
$$e^{-\beta|z|}(|z|+C)^\lambda\approx r^{\beta/\epsilon}(|z|+C)^\lambda,$$
which finishes the proof.
\end{proof}

\begin{lem}\label{r-big}
Let $x\in X$ and $(1-e^{-\epsilon |x|})/\epsilon\leq r\leq 2\diam X$. Then 
$$\mul(B(x, r))\approx r.$$
In particular, if $x=0$, then this estimate holds for all $r\geq 0$.
\end{lem}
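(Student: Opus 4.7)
The plan hinges on the observation that the condition $r \geq (1 - e^{-\epsilon|x|})/\epsilon = d_X(0, x)$ forces $|z| = 0$ in Lemma~\ref{relation-measure}, since substituting gives $1 + \epsilon r e^{\epsilon|x|} \geq e^{\epsilon|x|}$, and hence $|x| - (1/\epsilon)\log(1+\epsilon r e^{\epsilon|x|}) \leq 0$. Consequently $B(x, r) \subset F(0, 2r)$, and moreover $0 \in B(x, r)$, so the whole geodesic $[0, x]$ lies inside the ball.

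For the upper bound $\mu_\lambda(B(x, r)) \lesssim r$, I would split into the subcases $2r \leq 1/\epsilon$ and $2r > 1/\epsilon$. In the first subcase, Lemma~\ref{lF} applied to $F(0, 2r)$ (the hypothesis $2r \leq e^0/\epsilon$ is satisfied) yields $\mu_\lambda(F(0, 2r)) \approx 2r \cdot C^\lambda \lesssim r$. In the second subcase, $r$ is comparable to $1/\epsilon$ since $r \leq 2\diam X = 4/\epsilon$, and we bound $\mu_\lambda(B(x, r)) \leq \mu_\lambda(X)$, which is finite precisely because $\beta > \log K$ forces convergence of $\sum_i K^i e^{-\beta i}(i + C)^\lambda$; this finite value is comparable to $r$.

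For the lower bound $\mu_\lambda(B(x, r)) \gtrsim r$, I would split according to whether $r \leq 2 d_X(0, x)$ or not. If $r \leq 2 d_X(0, x)$, then $d_X(0, x) \approx r$, and since $[0, x] \subset B(x, r)$, Lemma~\ref{geodesic} combined with Lemma~\ref{lF} applied to $F(0, d_X(0, x))$ gives $\mu_\lambda([0, x]) \approx d_X(0, x) \cdot C^\lambda \approx r$. If instead $r > 2 d_X(0, x)$, the triangle inequality yields $B(0, r/2) \subset B(x, r)$, since any $y$ with $d_X(0, y) < r/2$ satisfies $d_X(x, y) \leq d_X(x, 0) + d_X(0, y) < d_X(0, x) + r/2 < r$. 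Then Corollary~\ref{r-small} with $x$ replaced by $0$ (valid when $r/2 \leq 1/\epsilon$) produces $\mu_\lambda(B(0, r/2)) \approx r/2 \approx r$; when $r/2 > 1/\epsilon$, one instead uses the crude bound $\mu_\lambda(B(x, r)) \geq \mu_\lambda(B(0, 1/\epsilon)) \approx 1 \approx r$, since $r$ is then bounded above by a constant.

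The main technical nuisance is the lower bound in the intermediate regime where $r$ is only slightly larger than $d_X(0, x)$, because the ball is essentially a thickening of the single geodesic $[0, x]$ and one must extract the factor of $r$ from the line integral rather than from any branching structure; this is exactly where Lemma~\ref{geodesic} is essential. The ``in particular'' clause for $x = 0$ is then immediate: the constraint $r \geq (1 - e^{0})/\epsilon = 0$ is vacuous, and for $r \leq 1/\epsilon$ the claim is already contained in Corollary~\ref{r-small}, while for larger $r$ it is the estimate just proved.
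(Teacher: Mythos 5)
Your argument is correct and follows essentially the same route as the paper: the upper bound comes from $B(x,r)\subset F(0,2r)$ together with Lemma \ref{lF}, and the lower bound from comparing the ball with a geodesic through the root via Lemma \ref{geodesic} (or with a root-centered ball via Corollary \ref{r-small}). The only cosmetic difference is in the lower bound's case split: the paper splits on $r\lessgtr 1/\epsilon$ and extends the geodesic $[0,x]$ to a point $x'$ with $d_X(0,x')=r$, whereas you split on $r\lessgtr 2d_X(0,x)$ and use $B(0,r/2)\subset B(x,r)$ in the second case; both work.
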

\begin{proof}
We have $0\in \overline{B(x, r)}$ by assumption, and hence 
$$B(x, r)\subset F(0, 2r).$$
From Lemma \ref{lF}, we have that
$$\mul(B(x, r))\leq \mul(F(0, 2r)) \lesssim r.$$
As for the lower bound, if $r<1/\epsilon$, since $0\in \overline{B(x, r)}$, letting 
$$\rho=-\frac{\log(1-\epsilon r)}{\epsilon} $$
and $x\leq x'$  with $|x'|=\rho$, then the estimate \eqref{geodesic} and Lemma \ref{lF} imply
$$\mul(B(x, r))\geq \mul([0,x'])\gtrsim \mul(F(0, r))\approx r.$$
If $1/\epsilon\leq r\leq 2\diam X=4/\epsilon$, then by Lemma \ref{geodesic}, we have that
$$\mul(B(x, r))\geq \mul(F(0, 1/\epsilon))\approx \frac{1}{\epsilon} \approx r.$$
\end{proof}

\begin{prop}\label{measure}
Let $x\in X$, $0<r\leq 2 \diam X$, $R_0=e^{-\epsilon|x|}/\epsilon$ and $z$ be as in Lemma \ref{relation-measure}. If $|x|\leq(\log 2)/\epsilon$, then 
$$\mul(B(x, r))\approx r.$$
If $|x|\geq (\log 2)/\epsilon$, then 
$$\mul(B(x, r))\approx \left\{\begin{array}{lc}
e^{(\epsilon-\beta)|x|}(|x|+C)^\lambda, &\ \ \ r\leq R_0;\\
r^{\beta/\epsilon}(|z|+C)^\lambda, &\ \ \ r\geq R_0.
\end{array}\right.$$
\end{prop}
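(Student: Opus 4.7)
The plan is to combine the three preceding estimates --- Corollary \ref{r-small}, Corollary \ref{r-middle}, and Lemma \ref{r-big} --- via a careful case analysis on the relative sizes of $|x|$ and $r$. The threshold $|x|=(\log 2)/\epsilon$ is selected precisely so that $R_0 = e^{-\epsilon|x|}/\epsilon$ and $d_X(0,x) = (1-e^{-\epsilon|x|})/\epsilon$ both equal $1/(2\epsilon)$ there, which is exactly what is needed for the three sub-regimes governed by those lemmas to tile $(0, 2\diam X]$ cleanly on each side of the threshold.

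First I would dispose of the case $|x|\leq(\log 2)/\epsilon$. The standing assumption $C\geq 2(\log 4)/\epsilon$ gives $|x|\leq C/2$, so both $(|x|+C)^\lambda$ and $e^{(\epsilon-\beta)|x|}$ are comparable to $1$; moreover $(1-e^{-\epsilon|x|})/\epsilon\leq 1/(2\epsilon)\leq R_0$. For $r\leq R_0$, Corollary \ref{r-small} yields $\mul(B(x,r))\approx r$ after absorbing those comparable factors. For $r\geq R_0$, one has $r\geq (1-e^{-\epsilon|x|})/\epsilon$, so Lemma \ref{r-big} applies and likewise gives $\mul(B(x,r))\approx r$.

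Next I would handle $|x|\geq (\log 2)/\epsilon$, where $R_0\leq 1/(2\epsilon)\leq (1-e^{-\epsilon|x|})/\epsilon$, so $r$ splits into three sub-regimes. When $r\leq R_0$, Corollary \ref{r-small} applies directly. When $R_0\leq r\leq (1-e^{-\epsilon|x|})/\epsilon$, Corollary \ref{r-middle} applies verbatim and gives $\mul(B(x,r))\approx r^{\beta/\epsilon}(|z|+C)^\lambda$. When $(1-e^{-\epsilon|x|})/\epsilon\leq r\leq 2\diam X = 4/\epsilon$, Lemma \ref{r-big} gives $\mul(B(x,r))\approx r$; the formula \eqref{relationeq} in this sub-regime forces $|z|=0$, hence $(|z|+C)^\lambda\approx 1$, and since $r$ is then trapped in a bounded positive interval, $r^{\beta/\epsilon}\approx 1\approx r$.

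The step that demands any care --- and is the main obstacle --- is the reconciliation of the Lemma \ref{r-big} bound $\mul(B(x,r))\approx r$ with the weighted form $r^{\beta/\epsilon}(|z|+C)^\lambda$ on the largest sub-regime of the second case, which hinges on $|z|=0$ and on $r$ lying in a bounded positive interval. The remainder is a direct substitution of the three preceding results into the appropriate sub-regimes, together with routine comparisons of the exponential and polynomial factors on each piece.
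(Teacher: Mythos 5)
Your proof is correct and follows essentially the same route as the paper's: both dispose of $|x|\leq(\log 2)/\epsilon$ by noting the prefactors are $\approx 1$ and invoking Corollary \ref{r-small} and Lemma \ref{r-big}, and for $|x|\geq(\log 2)/\epsilon$ both split $r$ at $R_0$ and at $(1-e^{-\epsilon|x|})/\epsilon$, using Corollaries \ref{r-small} and \ref{r-middle} and then reconciling Lemma \ref{r-big} with $r^{\beta/\epsilon}(|z|+C)^\lambda$ via $|z|=0$ and $r\approx 1$. One remark applying equally to your write-up and to the paper's: since Corollary \ref{r-small} yields $e^{(\epsilon-\beta)|x|}\,r\,(|x|+C)^\lambda$, the first line of the displayed formula in the proposition is evidently missing the factor $r$, and both proofs implicitly prove the statement with that factor restored.
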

\begin{proof}
If $|x|\leq(\log 2)/\epsilon$, then $e^{(\epsilon-\beta)|x|}\approx 1$, $(|x|+C)^\lambda\approx 1$ and the result follows from directly from Corollary \ref{r-small} and Lemma \ref{r-big}.

If $|x|\geq (\log 2)/\epsilon$ and $r\leq (1-e^{-\epsilon|x|})/\epsilon$, then the estimate follows directly from Corollary \ref{r-small} and $\ref{r-middle}$. For $r\geq (1-e^{-\epsilon|x|})/\epsilon \geq 1/2\epsilon$, since $|z|=0$, we have by Lemma \ref{r-big} that 
$$\mul(B(x, r))\approx r\approx 1\approx r^{\beta/\epsilon}(|z|+C)^\lambda.$$
\end{proof}

\begin{cor}\label{doubling-}
The measure $\mul$ is doubling, i.e., $\mul(B(x, 2r)) \lesssim \mul(B(x, r))$.
\end{cor}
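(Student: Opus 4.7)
The plan is to verify the doubling inequality by direct case analysis, using the explicit estimates for $\mul(B(x,r))$ established in Proposition~\ref{measure} (together with Corollary~\ref{r-small} and Lemma~\ref{r-big}). In each regime of radii, the formula depends on $r$ as a monomial with a bounded prefactor, and doubling $r$ changes each factor by a uniformly bounded multiplicative constant; the only subtle point will be the transitions between regimes.

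First, in the easy case $|x|\leq (\log 2)/\epsilon$, one has $\mul(B(x,r))\approx r$ for all admissible radii, so $\mul(B(x,2r))\approx 2r\approx \mul(B(x,r))$ whenever $2r\leq 2\diam X$; for $r\geq \diam X$ both balls equal $X$ and doubling is trivial. In the main case $|x|\geq (\log 2)/\epsilon$, I would split according to whether $r$ and $2r$ lie below or above the threshold $R_0 = e^{-\epsilon|x|}/\epsilon$. For $2r\leq R_0$, Corollary~\ref{r-small} gives $\mul(B(x,r))\approx e^{(\epsilon-\beta)|x|}r(|x|+C)^\lambda$, which is linear in $r$, so the ratio is $\approx 2$. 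For $r\geq R_0$, the formula $\mul(B(x,r))\approx r^{\beta/\epsilon}(|z|+C)^\lambda$ changes by $2^{\beta/\epsilon}$ on its $r$-part; formula~\eqref{relationeq} shows $|z(r)| - |z(2r)|\leq (\log 2)/\epsilon$, and since $|z|+C\geq C$ is bounded away from $0$, the factor $\bigl((|z(2r)|+C)/(|z(r)|+C)\bigr)^\lambda$ is bounded by a constant depending only on $\lambda$, $\epsilon$ and $C$.

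The only remaining case is the regime transition $r\leq R_0<2r$, which I would handle by checking that the two formulas of Proposition~\ref{measure} agree up to a multiplicative constant at $r=R_0$: evaluating the small-$r$ estimate at $R_0$ yields $\approx e^{-\beta|x|}(|x|+C)^\lambda/\epsilon$, while at $r=R_0$ one has $|z(R_0)|=|x|-(\log 2)/\epsilon\geq 0$, so the large-$r$ estimate gives $R_0^{\beta/\epsilon}(|z(R_0)|+C)^\lambda\approx e^{-\beta|x|}(|x|+C)^\lambda$. This matching absorbs the boundary transition into the constants already controlled in the interior regimes and completes the argument. I do not anticipate any genuine obstacle beyond careful bookkeeping at the regime boundaries.
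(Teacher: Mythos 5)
Your proposal is correct, and it follows the same overall skeleton as the paper's proof: a case analysis on $|x|$ versus $(\log 2)/\epsilon$ and on the position of $r$ and $2r$ relative to $R_0=e^{-\epsilon|x|}/\epsilon$, using the ball-measure estimates of Corollary \ref{r-small}, Corollary \ref{r-middle} and Lemma \ref{r-big} (i.e.\ Proposition \ref{measure}). Where you genuinely diverge is in the key technical point, the comparability $(|z_r|+C)^\lambda\approx(|z_{2r}|+C)^\lambda$, which is exactly the estimate \eqref{doubling} in the paper. The paper establishes it by a three-way sub-case analysis (whether $r$, $2r$, or neither exceeds $(1-e^{-\epsilon|x|})/\epsilon$), the last sub-case requiring the somewhat opaque computation $2(|z_{2r}|+C)-(|z_r|+C)=|z_r|+C-2(\log 2)/\epsilon\geq 0$. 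You instead read off directly from \eqref{relationeq} that $0\leq |z_r|-|z_{2r}|\leq \frac1\epsilon\log\frac{1+2\epsilon re^{\epsilon|x|}}{1+\epsilon re^{\epsilon|x|}}\leq(\log 2)/\epsilon$ (the bound survives the truncation at $0$), so that $1\leq(|z_r|+C)/(|z_{2r}|+C)\leq 1+(\log 2)/(\epsilon C)\leq 3/2$ by the standing assumption $C\geq 2(\log 2)/\epsilon$; this is cleaner and dispenses with the sub-cases in one line. You also treat the transition regime $r\leq R_0<2r$ differently: the paper computes the ratio $e^{(\epsilon-\beta)|x|}r/(2r)^{\beta/\epsilon}\approx(R_0/r)^{\beta/\epsilon-1}\approx 1$, while you match the two formulas of Proposition \ref{measure} at $r=R_0$ (using $|z(R_0)|=|x|-(\log 2)/\epsilon$) and note that each of $r$ and $2r$ is within a factor $2$ of $R_0$; both arguments are valid and of comparable length. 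One cosmetic remark: the first line of Proposition \ref{measure} as printed omits the factor $r$ present in Corollary \ref{r-small}; you correctly work with the version including $r$, which is also what the paper's own proof of the corollary uses.
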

\begin{proof}
In the case $|x|\leq(\log 2)/\epsilon$ and the case $|x|\geq (\log 2)/\epsilon$ with $2r\leq R_0$,  the result follows directly from Proposition \ref{measure}.

In the case $|x|\geq (\log 2)/\epsilon$ with $2r\geq R_0$, if $r\geq R_0$, then 
$$r^{\beta/\epsilon}\approx (2r)^{\beta/\epsilon};$$
if $r\leq R_0$, then
$$\frac{e^{(\epsilon-\beta)|x|}r}{(2r)^{\beta/\epsilon}}\approx \left(\frac{R_0}{r}\right)^{\beta/\epsilon-1}\approx 1.$$
Let $z_r$ and $z_{2r}$ be defined as in Lemma \ref{relation-measure} with respect to $r$ and $2r$. From Corollary \ref{r-small} and the above estimates, the doubling condition of $\mul$ follows once we prove that 
\begin{equation}\label{doubling}
\frac{|z_r|+C}{|z_{2r}|+C}\approx 1.
\end{equation}
If $r\geq (1-e^{-\epsilon|x|})/\epsilon$, then $|z_r|=|z_{2r}|=0$ give \eqref{doubling}. If $2r\geq (1-e^{-\epsilon|x|})/\epsilon \geq r$, then $r\geq (1-e^{-\epsilon|x|})/2\epsilon$ implies that 
\begin{align*}
|z_r|+C&=|x|-\frac{1}{\epsilon}\log(1+\epsilon r e^{\epsilon |x|})+C\leq |x|-\frac1\epsilon \log\big(\frac12(1+e^{\epsilon |x|})\big)+C \\
&= |x|+C+\frac{\log 2}{\epsilon}- \frac{1}{\epsilon}\log(1+e^{\epsilon |x|})\leq C+\frac{\log 2}{\epsilon}\approx C =|z_{2r}|+C,
\end{align*}
which gives \eqref{doubling}. If $2r\leq (1-e^{-\epsilon|x|})/\epsilon$, for $C\geq 2(\log 2)/\epsilon$, we obtain that 
\begin{align*}
2(|z_{2r}|+C)-(|z_r|+C) &= |x|+C+\frac{1}{\epsilon}\log(1+\epsilon r e^{\epsilon |x|})-\frac{2}{\epsilon}\log(1+2\epsilon r e^{\epsilon |x|})\\
&\geq |x|+C+\frac{1}{\epsilon}\log(1+\epsilon r e^{\epsilon |x|})-\frac{2}{\epsilon}\log(2(1+\epsilon r e^{\epsilon |x|}))\\
&=|x|+C-\frac{2\log 2}{\epsilon} -\frac{1}{\epsilon}\log(1+\epsilon r e^{\epsilon |x|})\\
&=|z_r|+C-\frac{2\log 2}{\epsilon}\geq 0,
\end{align*}
which gives that $|z_r|+C\leq 2(|z_{2r}|+C)$.  Combining with the fact that $|z_{2r}|\leq |z_r|$, \eqref{doubling} is obtained. Therefore we finish the proof of this corollary.
\end{proof}

The following result is given by \cite[Lemma 5.2]{BBGS}.
\begin{prop}\label{Ahlfor-boundary}
The boundary $\bx$ is an Ahlfors $Q$-regular space with Hausdorff dimension 
$$Q=\frac{\log K}{\epsilon}.$$
\end{prop}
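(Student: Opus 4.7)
\medskip

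\noindent\textbf{Proof plan.} The plan is to produce a Borel measure $\nu$ on $\partial X$ that is Ahlfors regular of dimension $Q=\log K/\epsilon$ with respect to the visual metric, and then to conclude that the Hausdorff $Q$-measure is comparable to $\nu$ by a standard mass-distribution argument.

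First I would set up the \emph{shadow decomposition} of $\partial X$. For each vertex $x\in X$ with $|x|=n$, let $S_x\subset\partial X$ consist of all infinite geodesics from the root passing through $x$. The family $\{S_x:|x|=n\}$ partitions $\partial X$ into $K^n$ disjoint pieces, and each $S_y$ with $y$ a child of $x$ is contained in $S_x$. Using the explicit computation from the excerpt, if $\xi=0x_1x_2\cdots$ and $\zeta=0y_1y_2\cdots$ first branch apart at level $k+1$, then $d_X(\xi,\zeta)=(2/\epsilon)e^{-\epsilon k}$. Hence $\diam S_x=(2/\epsilon)e^{-\epsilon|x|}$, and for any $\xi\in S_x$ one has $S_x\subset B(\xi,(2/\epsilon)e^{-\epsilon|x|})$, while conversely $B(\xi,r)$ with $(2/\epsilon)e^{-\epsilon n}\leq r<(2/\epsilon)e^{-\epsilon(n-1)}$ equals exactly $S_{x_{n}}$ for $\xi=0x_1x_2\cdots$. (The endpoint cases are routine.)

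Next I would define $\nu$ on the algebra generated by shadows by $\nu(S_x):=K^{-|x|}$, which is consistent because each $S_x$ is the disjoint union of the $K$ shadows of its children, and then extend $\nu$ to a Borel probability measure on $\partial X$ via the Carath\'eodory/Kolmogorov construction. Using the ball/shadow correspondence from the previous step together with $K^{-n}=(e^{-\epsilon n})^{Q}$, for every $\xi\in\partial X$ and every $0<r\leq\diam\partial X$ one obtains
\begin{equation*}
\frac{1}{C}r^{Q}\;\leq\;\nu(B(\xi,r))\;\leq\;C r^{Q}
\end{equation*}
with a constant $C=C(K,\epsilon)$; the upper bound comes from $B(\xi,r)\subset S_{x_{n-1}}$ and the lower bound from $S_{x_n}\subset B(\xi,r)$ for the corresponding level $n$.

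Finally I would upgrade this to Ahlfors $Q$-regularity of the Hausdorff $Q$-measure. The upper bound $\mathcal H^Q(B(\xi,r))\lesssim r^Q$ is immediate: cover $B(\xi,r)$ by a single shadow of diameter $\approx r$. For the lower bound, take any cover $\{U_i\}$ of $B(\xi,r)$ by sets of small diameter; each $U_i$ is contained in a shadow $S_{x_i}$ with $\diam S_{x_i}\approx\diam U_i$, so $\nu(U_i)\leq\nu(S_{x_i})\approx(\diam U_i)^Q$, whence
\begin{equation*}
\nu(B(\xi,r))\;\leq\;\sum_i\nu(U_i)\;\lesssim\;\sum_i(\diam U_i)^Q,
\end{equation*}
and taking the infimum gives $r^Q\lesssim\nu(B(\xi,r))\lesssim\mathcal H^Q(B(\xi,r))$. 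Combining the two bounds yields Ahlfors $Q$-regularity, and in particular the Hausdorff dimension of $\partial X$ equals $Q=\log K/\epsilon$.

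The main obstacle, and really the only nontrivial point, is verifying the precise ball-to-shadow correspondence at boundary scales $r$ comparable to $\diam\partial X$ and handling the branching-level endpoints carefully; once this geometric identification is in place, the comparison of $\nu$ with $\mathcal H^Q$ is a standard mass-distribution argument.
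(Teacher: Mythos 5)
Your argument is correct, and it is essentially the standard one: the paper itself gives no proof of this proposition but simply cites \cite[Lemma 5.2]{BBGS}, where the boundary measure is constructed in the same way, by distributing unit mass uniformly over the shadows $S_x$ (denoted $I_x$ in Section \ref{s-besov} of the paper) and using the exact ball--shadow correspondence forced by the visual metric. One small point to tighten: covering $B(\xi,r)$ by a \emph{single} shadow of diameter $\approx r$ only bounds the Hausdorff content $\mathcal H^Q_\infty$, not $\mathcal H^Q$ itself; to get $\mathcal H^Q(B(\xi,r))\lesssim r^Q$ you should instead partition $S_{x_n}$ into its $K^{m-n}$ descendant shadows at level $m$ and let $m\to\infty$, noting that
\begin{equation*}
\sum_{|y|=m,\ y\geq x_n}\left(\diam S_y\right)^Q\approx K^{m-n}\,e^{-\epsilon m Q}=K^{-n}\approx r^Q
\end{equation*}
independently of $m$. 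With that adjustment the mass-distribution argument goes through exactly as you describe.
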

Hence we have an Ahlfors $Q$-regular measure $\nu$ on $\bx$ with
$$\nu(B(\xi, r)) \approx r^Q=r^{\log K/\epsilon},$$
for any $\xi\in \bx$ and $0<r\leq \diam \bx.$

\subsection{Newtonian spaces  on $X$}\label{Newtonian-space}
Let $u\in L_{\rm loc}^1(X, \mul)$. We say that a Borel function $g: X\rarrow [0, \infty]$ is an {\it upper gradient} of $u$ if 
\begin{equation}\label{gradient}|u(z)-u(y)|\leq \int_{\gamma} g\, ds_X\end{equation}
whenever $z, y\in X$ and $\gamma$ is the geodesic from $z$ to $y$, where $d s_X$ denotes the arc length measure with respect to the metric $d_X$. In the setting of a tree any rectifiable curve with end points $z$ and $y$ contains the geodesic connecting $z$ and $y$, and therefore the upper gradient defined above is equivalent to the definition which requires that inequality \ref{gradient} holds for all rectifiable curves with end points $z$ and $y$.

The notion of upper gradients is due to Heinonen and Koskela \cite{HK98}; we refer interested readers to \cite{H03,HKST} for a more detailed discussion on upper gradients.

The {\it Newtonian space} $\Newton$, $1\leq p<\infty$, is defined as the collection of all the functions for which 
$$\|u\|_{\Newton}:= \left(\int_X |u|^p\, d\mul+\inf_g \int_X g^p\, d\mul\right)^{1/p}<\infty,$$
where the infimum is taken over all upper gradients of $u$. 

%



Throughout the paper, we use $N^{1, p}(X)$ to denote $N^{1, p}(X, \mul)$ if $\lambda=0$.

\subsection{Besov-type spaces on $\bx$ via dyadic norms}\label{s-besov}
We first recall the Besov space $\Besov$ defined in \cite{BBGS}.
\begin{defn}\label{besov-0}\rm
For $0<\theta<1$ and $p\geq 1$, The Besov space $\Besov$ consists of all functions $f\in L^p(\bx)$ for which the seminorm  $\|f\|_{\dBesov}$ defined as
$$\|f\|^p_{\dBesov}:=\int_{\bx}\int_{\bx}\frac{|f(\zeta)|-f(\xi)|^p}{d_X(\zeta, \xi)^{\theta p}\nu(B(\zeta, d_X(\zeta, \xi)))}d\nu(\xi)\, d\nu(\zeta)$$
is finite. The corresponding norm for $\Besov$ is 
$$\|f\|_{\Besov}:=\|f\|_{\lpb}+\|f\|_{\dBesov}.$$
\end{defn}

Next, we give a dyadic decomposition on the boundary $\bx$ of the $K$-ary tree $X$:
 Let $V_n=\{x_j^n:  j=1, 2, \cdots, K^n\}$ be the set of all $n$-level vertices of the tree $X$ for any $n\in \N$, where a vertex $x$ is  {\it $n$-level} if $|x|=n$. Then we have that
$$V=\bigcup_{n\in \N} V_n$$
is the set containing all  the vertices of the tree $X$.
For any vertex $x\in V$, denote by $I_x$ the set 
$$\{\xi\in \bx: \text{the geodesic $[0, \xi)$ passes through $x$}\}.$$ 
We denote by $\dyadic$ the set $\{I_x: x\in V\}$ and $\dyadic_n$ the set $\{I_x: x\in V_n\}$ for any $n\in \N$. Then $\dyadic_0=\{\partial X\}$ and we have
$$\dyadic =\bigcup_{n\in \N} \dyadic_n.$$
Then the set $\dyadic$ is a dyadic decomposition of $\bx$. Moreover, for any $n\in \N$ and $I\in \dyadic_{n}$, there is a unique element $\widehat I$ in $\dyadic_{n-1}$ such that $I$ is a subset of it. It is easy to see that if $I=I_x$ for some $x\in V_{n}$, then $\widehat I=I_y$ with $y$ the unique parent of $x$ in the tree $X$. Hence the structure of the tree $X$ gives a corresponding structure of the dyadic decomposition of $\partial X$ which we defined above.

Since we  want to characterize the trace spaces of the Newtonian spaces with respect to our measure $\mul$, we introduce the following Besov-type spaces  $\lbesov$.
\begin{defn}\label{besov-3}\rm
For $0\leq\theta<1$ and $p\geq 1$,  the Besov-type space $\lbesov$ consists of all functions $f\in L^p(\bx)$ for which the dyadic $\dlbesove$-energy of $f$ defined as
$$\|f\|^p_{\dlbesov}:=\sum_{n=1}^{\infty} e^{\epsilon n\theta p}n^\lambda \sum_{I\in \dyadic_n} \nu(I)\left|f_{I}-f_{ \widehat I}\right|^p$$
  is finite. The norm on $\lbesov$ is 
$$\|f\|_{\lbesov}:=\|f\|_{\lpb}+\|f\|_{\dlbesov}.$$
\end{defn}


%

Here and throughout this paper, the measure $\nu$ on the boundary $\bx$ is  the Ahlfors regular measure in Proposition \ref{Ahlfor-boundary} and $f_I$ is the  mean value $\dashint_I f\, d\nu =\frac{1}{\nu(I)} \int_I f\, d\nu$.

The following proposition states that the Besov space $\mathcal B^{\theta, \lambda}_p(\bx)$ coincides with the Besov space  $\Besov$ whenever $0<\theta<1$ and $\lambda=0$. The proof of this proposition follows by using \cite[Lemma 5.4]{BBGS} and a  modification of the proof of \cite[Proposition A.1]{KTW}. We omit the details.

\begin{prop}\label{norm-equiv}
Let $0<\theta<1$ and $p\geq 1$. For any $f\in L^1_{\rm loc}(\bx)$, we have
$$\|f\|_{\Besov}\approx \|f\|_{\mathcal B^{\theta, 0}_p(\bx)}.$$
\end{prop}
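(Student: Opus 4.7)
\medskip

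\noindent\textbf{Proof proposal.} The plan is to prove the two norm inequalities separately, exploiting the Ahlfors $Q$-regularity of $(\bx,\nu)$ (Proposition \ref{Ahlfor-boundary}) together with the fact that for $I\in\dyadic_n$ one has $\nu(I)\approx e^{-\epsilon n Q}=K^{-n}$ and, for $\zeta\in I$ and $\xi$ in a sibling of $I$, $d_X(\zeta,\xi)\approx e^{-\epsilon n}$ and $\nu(B(\zeta,d_X(\zeta,\xi)))\approx\nu(I)$.

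For the direction $\|f\|_{\mathcal B^{\theta,0}_p(\bx)}\lesssim\|f\|_{\Besov}$, I would argue term by term. Fix $I\in\dyadic_n$ with parent $\widehat I$ and sibling region $J:=\widehat I\setminus I$. Since $\nu(J)\approx\nu(I)$ and $f_{\widehat I}$ is the convex combination $\tfrac{\nu(I)}{\nu(\widehat I)}f_I+\tfrac{\nu(J)}{\nu(\widehat I)}f_J$, one has $|f_I-f_{\widehat I}|\lesssim|f_I-f_J|$, and Jensen's inequality then yields
$$|f_I-f_{\widehat I}|^p\lesssim\dashint_I\dashint_J|f(\zeta)-f(\xi)|^p\,d\nu(\xi)\,d\nu(\zeta).$$
Substituting $d_X(\zeta,\xi)^{\theta p}\approx e^{-\epsilon n\theta p}$ and $\nu(B(\zeta,d_X(\zeta,\xi)))\approx\nu(I)$ inside the integral and multiplying through by $e^{\epsilon n\theta p}\nu(I)$ converts the term $e^{\epsilon n\theta p}\nu(I)|f_I-f_{\widehat I}|^p$ into an integral of the $\dBesov$-integrand over $I\times J$. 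Each pair $(\zeta,\xi)$ in $\bx\times\bx$ (off the diagonal) belongs to exactly one such parent-sibling region, namely the one indexed by the level at which $\zeta,\xi$ first split, so summing over $I\in\dyadic_n$ and $n\geq 1$ produces at most the full double integral defining $\|f\|^p_{\dBesov}$.

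For the reverse direction $\|f\|_{\Besov}\lesssim\|f\|_{\mathcal B^{\theta,0}_p(\bx)}$, the key is a Hölder-weighted telescoping argument. Let $I_k(\zeta)\in\dyadic_k$ denote the dyadic interval containing $\zeta$; by Lebesgue differentiation (valid since $(\bx,\nu)$ is doubling) $f(\zeta)=\lim_{k\to\infty}f_{I_k(\zeta)}$ for $\nu$-a.e.\ $\zeta$. For a pair $(\zeta,\xi)$ whose smallest common ancestor sits at level $m-1$, the identical early terms cancel, giving
$$f(\zeta)-f(\xi)=\sum_{k\geq m}\bigl(f_{I_k(\zeta)}-f_{\widehat{I_k(\zeta)}}\bigr)-\sum_{k\geq m}\bigl(f_{I_k(\xi)}-f_{\widehat{I_k(\xi)}}\bigr).$$
Applying Hölder's inequality with the weight sequence $\{e^{\epsilon k\theta}\}$ (whose reciprocal tail $\sum_{k\geq m}e^{-\epsilon k\theta}\approx e^{-\epsilon m\theta}$ is summable because $\theta>0$) produces
$$\Bigl|\sum_{k\geq m}\bigl(f_{I_k(\zeta)}-f_{\widehat{I_k(\zeta)}}\bigr)\Bigr|^p\lesssim e^{-\epsilon m\theta(p-1)}\sum_{k\geq m}e^{\epsilon k\theta(p-1)}\bigl|f_{I_k(\zeta)}-f_{\widehat{I_k(\zeta)}}\bigr|^p,$$
and symmetrically in $\xi$. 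On the slice of $\bx\times\bx$ where the common ancestor has level $m-1$, one has $d_X(\zeta,\xi)^{\theta p}\approx e^{-\epsilon m\theta p}$ and $\nu(B(\zeta,d_X(\zeta,\xi)))\approx e^{-\epsilon m Q}$, so the $\dBesov$-integrand is at most a constant times $e^{\epsilon m(\theta+Q)}$ times the two telescoping sums.

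It then remains to integrate, partitioning $\bx\times\bx$ by the common-ancestor level $m$. For each $J\in\dyadic_{m-1}$ the relevant slice has $\nu\times\nu$-measure of order $\nu(J)^2\approx e^{-2\epsilon m Q}$, and there are $\approx e^{\epsilon m Q}$ such $J$; after integrating the averaged telescoping sums and swapping the $m$- and $k$-summations, the inner sum $\sum_{m=1}^{k}e^{\epsilon m\theta}\approx e^{\epsilon k\theta}$ (again using $\theta>0$) combines with $e^{\epsilon k\theta(p-1)}$ to give precisely the weight $e^{\epsilon k\theta p}$, yielding $\|f\|^p_{\dBesov}\lesssim\|f\|^p_{\dot{\mathcal B}^{\theta,0}_p(\bx)}$. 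The $L^p$ parts of the two norms coincide by definition. The main obstacle is the Hölder-with-weights telescoping step and the careful bookkeeping of exponents; everything crucially uses $\theta>0$ so that both the tail $\sum_{k\geq m}e^{-\epsilon k\theta}$ and the head $\sum_{m\leq k}e^{\epsilon m\theta}$ behave like single-term geometric quantities.
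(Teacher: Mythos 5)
Your argument is correct. The paper itself omits the proof of this proposition, deferring to \cite[Lemma 5.4]{BBGS} and a modification of \cite[Proposition A.1]{KTW}; your self-contained proof supplies exactly the two standard ingredients that route relies on: for the easy direction, the decomposition of $\bx\times\bx$ into the disjoint regions $I\times(\widehat I\setminus I)$ indexed by the splitting level, together with $|f_I-f_{\widehat I}|\lesssim|f_I-f_J|$ and Jensen; for the harder direction, the telescoping identity $f(\zeta)-f(\xi)=\sum_{k\geq m}\bigl(f_{I_k(\zeta)}-f_{\widehat{I_k(\zeta)}}\bigr)-\sum_{k\geq m}\bigl(f_{I_k(\xi)}-f_{\widehat{I_k(\xi)}}\bigr)$ combined with H\"older against the geometric weight. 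Your exponent bookkeeping checks out (the slice integration gives $e^{\epsilon m(\theta+Q)}e^{-\epsilon mQ}=e^{\epsilon m\theta}$, and $\sum_{m\leq k}e^{\epsilon m\theta}\approx e^{\epsilon k\theta}$ recombines with $e^{\epsilon k\theta(p-1)}$ to yield $e^{\epsilon k\theta p}$), and the appeal to Lebesgue differentiation is legitimate because each $I_k(\zeta)$ is a ball centered at $\zeta$ by \cite[Lemma 5.1]{BBGS}, so this is a valid filling-in of the omitted details.
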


 For $\lambda>0$, we next define  special Besov-type spaces with $\theta=0$ and $p=1$. Before the definition,
  we first fix a sequence $\{\alpha(n): n\in \mathbb N\}$ such that there exist constants $c_1\geq c_0>1$ satisfying
\begin{equation}\label{sequence}
c_0\leq \frac{\alpha(n+1)}{\alpha(n)}\leq c_1, \ \ \ \forall \ n\in \mathbb N.
\end{equation}
A simple example of such a sequence is obtained by letting $\alpha(n)=2^{n}$.
\begin{defn}\label{besov-4}\rm
 For $\lambda>0$, the Besov-type space $\lbesovo$ consists of all functions $f\in L^1(\bx)$ for which the $\dlbesovoe$-dyadic energy of $f$ defined as
 $$\|f\|_{\dlbesovo}=\sum_{n=1}^{\infty} \alpha(n)^\lambda\sum_{I\in\dyadic_{\alpha(n)}}\nu(I)|f_I-f_{\widetilde I}|$$
  is finite. Here for any $I=I_x\in \dyadic_{\alpha(n)}$ with $x\in V_{\alpha(n)}$ and $n\geq 1$, we denote $\widetilde I=I_y$ where $y\in V_{\alpha(n-1)}$ is the ancestor of $x$ in $X$.
  The norm on $\lbesovo$ is 
$$\|f\|_{\lbesovo}:=\|f\|_{L^1(\bx)}+\|f\|_{\dlbesovo}.$$
\end{defn}

\begin{rem}\rm
Actually, the choice of the sequence $\{\alpha(n)\}_{n\in \mathbb N}$ will not affect the definition of $\lbesovo$: by Theorem \ref{th5} we  obtain that  any 
two choices of the sequences $\{\alpha(n)\}_{n\in \mathbb N}$  lead to comparable 
norms, for more details see Corollary \ref{comparable}.

\end{rem}

\section{Proofs}\label{proofs}

\subsection{Proof of Theorem \ref{th2} }
\begin{proof}
{\bf Trace Part:} Let $f\in \Newton$.  We first define the trace operator as
\begin{equation}\label{trace-operator}
\Tr f(\xi):=\tilde f(\xi)=\lim_{[0, \xi)\ni x\rarrow \xi} f(x), \ \ \xi\in \bx,
\end{equation}
where the limit is taken along the geodesic ray $[0, \xi)$. Then our task is to show that the above limit exists for $\nu$-a.e. $\xi\in \bx$ and that the trace $\Tr f$ satisfies the norm estimates.

Let $\xi\in \bx$ be arbitrary and let $x_j=x_j(\xi)$ be the ancestor of $\xi$ with $|x_j|=j$. 
To show that the limit in \eqref{trace-operator} exists for $\nu$-a.e. $\xi\in \bx$, it suffices to show that the function 
\begin{equation}\label{trace-operator1}
{\tilde f}^*(\xi)=|f(0)|+\sum_{j=0}^{+\infty} |f(x_{j+1})-f(x_{j})|
\end{equation}
is in $L^p(\bx)$, since if $\tilde f^*\in L^p(\bx)$, we have $|\tilde f^*|<\infty$ for $\nu$-a.e. $\xi\in \bx$.

Set $r_j=2e^{-j\epsilon}/\epsilon$. Then on the edge $[x_j, x_{j+1}]$ we have the relations
\begin{equation}\label{relation1}
ds\approx e^{(\beta-\epsilon)j} j^{-\lambda}\, d\mul\approx r_j^{1-\beta/\epsilon} j^{-\lambda}\, d\mu \ \ \ {\rm and}\ \ \mul([x_{j}, x_{j+1}])\approx r_j^{\beta/\epsilon} j^{\lambda},
\end{equation}
where the comparison constants depend on $\epsilon, \beta$.
Then we obtain the estimate
\begin{align}
{\tilde f}^*(\xi) &=|f(0)|+\sum_{j=0}^{+\infty}  |f(x_{j+1})-f(x_{j})|\leq |f(0)|+\sum_{j=0}^{+\infty} \int_{[x_{j}, x_{j+1}]} g_f \, ds\notag \\
&\lesssim |f(0)|+\sum_{j=0}^{+\infty}{r_j^{1-\beta/\epsilon}}j^{-\lambda}\int_{[x_{j}, x_{j+1}]} g_f \, d\mul\approx |f(0)|+\sum_{j=0}^{+\infty}{r_j}\dashint_{[x_{j}, x_{j+1}]} g_f \, d\mul, \label{estimate-f}
\end{align}
where $g_f$ is an upper gradient of $f$.

Since $\theta=1-(\beta-\log K)/(p\epsilon)>0$, we may choose $0<\kappa<\theta$. Then for $p>1$, by the H\"older inequality  and \eqref{relation1}, we have that
\begin{align*}
|{\tilde f}^*(\xi)|^p &\lesssim |f(0)|^p+\sum_{j=0}^{+\infty} r_j^{p(1-\kappa)}\dashint_{[x_{j}, x_{j+1}]} {g_f}^p \, d\mul\\
&\approx |f(0)|^p+\sum_{j=0}^{+\infty} r_j^{p(1-\kappa)-\beta/\epsilon} j^{-\lambda}\int_{[x_{j}, x_{j+1}]} {g_f}^p \, d\mul.
\end{align*}
For $p=1$, the above estimates are also true without using the H\"older inequality. It follows that for $p\geq 1$,
$$|{\tilde f}^*(\xi)|^p \lesssim |f(0)|^p+\sum_{j=0}^{+\infty} r_j^{p(1-\kappa)-\beta/\epsilon} j^{-\lambda}\int_{[x_{j}, x_{j+1}]} {g_f}^p \, d\mul.$$
Integrating over all $\xi\in \bx$, since $\nu(\bx)\approx 1$, we obtain by means of Fubini's theorem that
\begin{align*}
\int_\bx |{\tilde f}^*(\xi)|^p\, d\nu&\lesssim |f(0)|^p+\int_\bx \sum_{j=0}^{+\infty} r_j^{p(1-\kappa)-\beta/\epsilon} j^{-\lambda}\int_{[x_{j}(\xi), x_{j+1}(\xi)]} {g_f}^p \, d\mul\, d\nu(\xi)\\
&=|f(0)|^p +\int_X g_f(x)^p\int_\bx \sum_{j=0}^{+\infty} r_j^{p(1-\kappa)-\beta/\epsilon} j^{-\lambda}\chi_{[x_j(\xi), x_{j+1}(\xi)]}(x)\, d\nu(\xi)\, d\mul(x).
\end{align*}
Notice that $\chi_{[x_j(\xi), x_{j+1}(\xi)]}(x)$ is nonzero only if $j\leq |x|\leq j+1$ and $x<\xi$. Thus the last estimate can be rewritten as 
$$\int_\bx |{\tilde f}^*(\xi)|^p\, d\nu\lesssim |f(0)|^p + \int_X g_f(x)^p  r_{j(x)}^{p(1-\kappa)-\beta/\epsilon} j(x)^{-\lambda}\nu(E(x))\, d\mul(x),$$
where $E(x)=\{\xi\in \bx: x<\xi\}$ and $j(x)$ is the largest integer such that $j(x)\leq |x|$. 

It follows from \cite[Lemma 5.1]{BBGS} that $E(x)=B(\xi, r)$ for any $\xi\in E(x)$ and  $r\approx e^{-\epsilon j(x)}$. Hence we obtain from Proposition \ref{Ahlfor-boundary} that $\nu(E(x))\approx r_{j(x)}^Q$. Since $p(1-\kappa)>\beta/\epsilon-\log K/\epsilon=\beta/\epsilon-Q$, then for any $j(x)\in \mathbb N$, we have that
$$r_{j(x)}^{p(1-\kappa)-\beta/\epsilon+Q} j(x)^{-\lambda}\lesssim 1,$$
which induces the estimate
$$\int_\bx |{\tilde f}^*(\xi)|^p\, d\nu\lesssim |f(0)|^p+ \int_X g_f(x)^p\, d\mul(x).$$
Hence we obtain that ${\tilde f}^*$ is in $L^p(\bx)$, which gives the existence of the limit in \eqref{trace-operator} for $\nu$-a.e. $\xi\in\bx$. In particular, since $|\tilde f|\leq {\tilde f}^*$, we have the estimate
$$\int_\bx |\tilde f|^p\, d\nu\lesssim \int_X |f|^p\, d\mul + \int_X {g_f}^p\, d\mul,$$
and hence the norm estimate
\begin{equation}\label{L_p}
\|\tilde f\|_{L^p(\bx)}\lesssim \left(\int_X |f|^p\, d\mul + \int_X {g_f}^p\, d\mul\right)^{1/p}=\|f\|_{\Newton}.
\end{equation}

To estimate the dyadic energy $\|\tilde f\|^p_{\dlbesov}$, for any $I\in \dyadic_n$, $\xi\in I$ and $\zeta\in\widehat I$, 
we have that
$$|\tilde f(\xi)-\tilde f(\zeta)|\leq \sum_{j=n-1}^{+\infty}|f(x_j)-f(x_{j+1})|+\sum_{j=n-1}^{+\infty}|f(y_j)-f(y_{j+1})|,$$
where $x_j=x_j(\xi)$ and $y_j=y_j(\zeta)$ are the ancestors of $\xi$ and $\zeta$ with $|x_j|=|y_j|=j$, respectively. In the above inequality, we used the fact that $x_{n-1}(\xi)=y_{n-1}(\eta)$.
By using \eqref{relation1} and an  argument similar to \eqref{estimate-f}, we obtain that
$$|\tilde f(\xi)-\tilde f(\zeta)|\lesssim \sum_{j=n-1}^{+\infty}r_j \dashint_{[x_{j}(\xi), x_{j+1}(\xi)]} g_f \, d\mul+\sum_{j=n-1}^{+\infty}r_j \dashint_{[y_{j}(\zeta), y_{j+1}(\zeta)]} g_f \, d\mul.$$
Choose $0<\kappa<\theta$ and insert $r_j^{\kappa} r_j^{-\kappa}$ into the above sum. If $p>1$, then the H\"older inequality   and \eqref{relation1} imply that
\begin{align*}
|\tilde f(\xi)-\tilde f(\zeta)|^p&\lesssim r_{n-1}^{\kappa p} \sum_{j=n-1}^{+\infty}r_j^{p(1-\kappa)} \dashint_{[x_{j}(\xi), x_{j+1}(\xi)]} {g_f}^p \, d\mul+ r_{n-1}^{\kappa p}\sum_{j=n-1}^{+\infty}r_j^{p(1-\kappa)}  \dashint_{[y_{j}(\zeta), y_{j+1}(\zeta)]} {g_f}^p \, d\mul\\
&\approx r_{n-1}^{\kappa p} \sum_{j=n-1}^{+\infty}r_j^{p(1-\kappa)-\beta/\epsilon}j^{-\lambda} \left(\int_{[x_{j}(\xi), x_{j+1}(\xi)]} {g_f}^p \, d\mul+\int_{[y_{j}(\zeta), y_{j+1}(\zeta)]} {g_f}^p \, d\mul\right).
\end{align*}
For $p=1$ the estimates above is also true without  using the H\"older inequality.  
It follows from Fubini's theorem and from $\nu(I)\approx \nu(\widehat I)$ that
\begin{align*}
\sum_{I\in\dyadic_n}\nu(I)|\tilde f_I&-\tilde f_{\widehat I}|^p\leq \sum_{I\in \dyadic_n} \nu(I)\dashint_I\dashint_{\widehat I} |\tilde f(\xi)-\tilde f(\zeta)|^p\, d\nu(\xi)\, d\nu(\zeta)\\
&\lesssim \int_\bx r_{n-1}^{\kappa p} \sum_{j=n-1}^{+\infty}r_j^{p(1-\kappa)-\beta/\epsilon}j^{-\lambda} \int_{[x_{j}(\xi), x_{j+1}(\xi)]} {g_f}^p \, d\mul\, d\nu(\xi)\\
&=r_{n-1}^{\kappa p} \int_{X\cap\{|x|\geq n-1\}} {g_f}^p  \int_\bx \sum_{j=n-1}^{+\infty}r_j^{p(1-\kappa)-\beta/\epsilon}j^{-\lambda}\chi_{[x_j(\xi), x_{j+1}(\xi)]}(x)\, d\nu(\xi)\, d\mul(x).
\end{align*}
Using the notation $E(x)$ and $j(x)$ defined before, the above estimate can be rewritten as
\begin{align*}
\sum_{I\in\dyadic_n}\nu(I)|\tilde f_I-\tilde f_{\widehat I}| &\lesssim  r_{n-1}^{\kappa p} \int_{X\cap\{|x|\geq n-1\}} {g_f}^p  \, r_{j(x)}^{p(1-\kappa)-\beta/\epsilon}j(x)^{-\lambda}\nu(E(x))\, d\mul \\
&\lesssim  r_{n-1}^{\kappa p} \int_{X\cap\{|x|\geq n-1\}} {g_f}^p\, r_{j(x)}^{p(1-\kappa)-\beta/\epsilon+Q}j(x)^{-\lambda}\, d\mul.
\end{align*}
Since $e^{-\epsilon n}\approx r_{n-1}$ and $p-\beta/\epsilon+Q=\theta p$, we obtain the estimate
\begin{align*}
\|\tilde f\|^p_{\dlbesov}&\lesssim \sum_{n=1}^{+\infty} r_{n-1}^{\kappa p-\theta p} n^\lambda \int_{X\cap\{|x|\geq n-1\}} {g_f}^p\, r_{j(x)}^{p(1-\kappa)-\beta/\epsilon+Q}j(x)^{-\lambda}\, d\mul\\
&=\sum_{n=0}^{+\infty} r_{n}^{\kappa p-\theta p}(n+1)^\lambda \sum_{j=n}^{+\infty}\int_{X\cap\{j+1> |x|\geq j\}} {g_f}^p\, r_{j}^{(\theta -\kappa) p}j^{-\lambda}\, d\mul\\
&=\sum_{j=0}^{+\infty} \int_{X\cap\{j+1> |x|\geq j\}} {g_f}^p\, r_{j}^{(\theta -\kappa) p}j^{-\lambda}\, d\mul \left(\sum_{n=0}^{j} r_n^{\kappa p-\theta p}(n+1)^\lambda\right)\\
&\lesssim \sum_{j=0}^{+\infty} \int_{X\cap\{j+1> |x|\geq j\}} {g_f}^p\, d\mul=\int_X {g_f}^p\, d\mul.
\end{align*}
Here the last inequality employed  the estimate
$$\sum_{n=0}^{j} r_n^{\kappa p-\theta p}(n+1)^\lambda\lesssim r_j^{\kappa p-\theta p} (j+1)^\lambda\approx r_j^{(\kappa-\theta)p}j^\lambda,$$
which comes from the facts $r_n=2e^{-\epsilon n}/\epsilon$ and $\kappa p-\theta p<0$.
Thus, we obtain the estimate 
$$\|\tilde f\|_{\dlbesov}\lesssim \|g_f\|_{L^p(X, \mul)}\leq \|f\|_\Newton,$$
which together with \eqref{L_p} finishes the proof of Trace Part.

{\bf Extension Part:} Let $u\in \lbesov$.  For $x\in X$ with $|x|=n\in \mathbb N$, let 
\begin{equation}\label{extension-operator1}
\tilde u(x)=\dashint_{I_x} u\, d\nu,
 \end{equation} 
 where $I_x\in \dyadic_n$ is the set of all the points $\xi\in \bx$ such that the geodesic $[0, \xi)$ passes through $x$,  that is,  $I_x$ consists of all the points in $\bx$ that have $x$ as an ancestor. By \eqref{trace-operator} and \eqref{extension-operator1} we notice that $\Tr \tilde u(\xi)=u(\xi)$ whenever $\xi\in \bx$ is a Lebesgue point of $u$. 
 
 If $y$ is a child of $x$, then $|y|=n+1$ and $I_x$ is the parent of $I_y$. We extend $\tilde u$ to the edge $[x, y]$ as follows: For each $t\in [x, y]$, set
\begin{equation}\label{extension-operator2}
g_{\tilde u}(t)=\frac{\tilde u(y)-\tilde u(x)}{d_X(x, y)}=\frac{\epsilon (u_{I_y}-u_{I_x})}{(1-e^{-\epsilon})e^{-\epsilon n}}=\frac{\epsilon (u_{I_y}-u_{\widehat I_y})}{(1-e^{-\epsilon})e^{-\epsilon n}}
\end{equation}
 and
\begin{equation}\label{extension-operator3}
\tilde u(t)= \tilde u(x)+g_{\tilde u}(t)d_X(x, t).
\end{equation}
Then we define the extension of $u$ to be $\tilde u$. 

Since $g_{\tilde u}$ is a constant and $\tilde u$ is linear with respect to the metric $d_X$ on the edge $[x, y]$, it follows that $|g_{\tilde u}|$ is an upper gradient of $\tilde u$ on the edge $[x, y]$. We have that 
\begin{align}
\int_{[x, y]} |g_{\tilde u}|^p\, d\mul&\approx \int_{n}^{n+1} |u_{I_y}-u_{\widehat I_y}|^p e^{-\beta\tau+\epsilon n p}(\tau+C)^\lambda\, d\tau\notag\\
&\approx e^{(-\beta+\epsilon p)(n+1)} (n+1)^\lambda |u_{I_y}-u_{\widehat I_y}|^p.\label{lem2}
\end{align}
Now sum up the above integrals over all the edges on $X$ to obtain that
$$\int_X |g_{\tilde u}|^p\, d\mul \approx \sum_{n=1}^{+\infty}\sum_{I\in \dyadic_n} e^{(-\beta+\epsilon p) n}n^\lambda |u_I-u_{\widehat I}|^p.$$
For $I\in \dyadic_n$, the estimate
$$e^{\epsilon n\theta p}\nu(I)\approx e^{\epsilon n(p-(\beta-\log K)/\epsilon) -\epsilon n Q}\approx e^{n(\epsilon p-\beta)}$$
implies that
\begin{equation}\label{extension-norm}
\int_X |g_{\tilde u}|^p\, d\mul \approx \sum_{n=1}^{+\infty} e^{\epsilon n\theta p}n^\lambda \sum_{I\in \dyadic_n} \nu(I) |u_I-u_{\widehat I}|^p=\|u\|^p_{\dlbesov}.
\end{equation}
 
 To estimate the $L^p$-norm of $\tilde u$, we first observe that 
 \begin{equation}\label{L_p-relation}
 |\tilde u(t)|\leq |\tilde u(x)|+|g_{\tilde u}|d_X(x, y)=|\tilde u(x)|+|\tilde u(y)-\tilde u(x)|\lesssim |u_{I_x}|+|u_{I_y}|
 \end{equation}
for any $t\in [x, y]$. Then we obtain the estimate
 \begin{equation}\label{lem1}
 \int_{[x, y]} |\tilde u(t)|^p\, d\mul\lesssim \mul([x, y])\left(|u_{I_x}|^p+|u_{I_y}|^p\right)\lesssim e^{-\beta n+\epsilon n Q} n^\lambda\int_{I_x} |u|^p\, d\nu.
 \end{equation}
 Here the last inequality used the facts $\nu(I_x)\approx \nu(I_y)\approx e^{\epsilon n Q}$ and $\mul([x, y])\approx e^{-\beta n}n^\lambda$. Now sum up the above integrals over all the edges on $X$ to obtain that 
 $$\int_{X} |\tilde u(t)|^p\, d\mul \lesssim \sum_{n=0}^{+\infty}\sum_{I\in \dyadic_n}  e^{-\beta n+\epsilon n Q}n^\lambda \int_{I} |u|^p\, d\nu= \sum_{n=0}^{+\infty}e^{-\beta n+\epsilon n Q} n^\lambda \int_{\bx}  |u|^p\, d\nu.$$

Since $\beta-\epsilon Q=\beta -\log K>0$, the sum of  $e^{-\beta n+\epsilon n Q} n^\lambda$ converges. Hence we obtain the $L^p$-estimate
\begin{equation}\label{extension-L_p}
\int_{X} |\tilde u|^p\, d\mul \lesssim  \int_{\bx}  |u|^p\, d\nu.
\end{equation}

Combing  \eqref{extension-norm} with \eqref{extension-L_p}, we obtain the norm estimate
$$\|\tilde u\|_{\Newton}\lesssim \|u\|_{\lbesov}.$$
\end{proof}

\subsection{Proof of Theorem \ref{th3}}
\begin{prop}\label{prop3.1}
Let $p=(\beta-\log K)/\epsilon$ and $\lambda>p-1$ if $p>1$ or $\lambda\geq 0$ if $p=1$. Then the trace operator $\Tr$ defined in \eqref{trace-operator} is a bounded linear operator from $\Newton$ to $L^p(\bx)$.
\end{prop}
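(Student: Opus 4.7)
The plan is to mimic the trace estimate from the proof of Theorem \ref{th2}, adapted to the critical exponent $\theta=0$. Define $\Tr f(\xi)=\lim_{[0,\xi)\ni x\to\xi} f(x)$ as in \eqref{trace-operator}, and the pointwise majorant $\tilde f^*(\xi)=|f(0)|+\sum_{j=0}^{\infty}|f(x_{j+1})-f(x_j)|$ as in \eqref{trace-operator1}, where $x_j=x_j(\xi)$ denotes the $j$-th ancestor of $\xi$. It suffices to show that $\tilde f^*\in L^p(\bx)$ with norm controlled by $\|f\|_{\Newton}$, since this yields existence of the limit $\nu$-a.e.\ and the desired norm bound on $\Tr f$. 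Linearity of $\Tr$ is immediate from the definition.

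Exactly as in Theorem \ref{th2}, bounding $|f(x_{j+1})-f(x_j)|$ by the line integral of $g_f$ along $[x_j,x_{j+1}]$ and applying \eqref{relation1} gives
\[
\tilde f^*(\xi)\lesssim |f(0)|+\sum_{j=0}^{\infty} r_j\,\dashint_{[x_j,x_{j+1}]} g_f\,d\mul,
\]
with $r_j\approx e^{-\epsilon j}$. For the case $p=1$, $\lambda\geq 0$, no Hölder step is needed. Integrating over $\xi$ and applying Fubini produces the kernel $\nu(E(z))\cdot(ds_X/d\mul)(z)\approx e^{(\beta-\epsilon-\epsilon Q)|z|}(|z|+C)^{-\lambda}$ (after using $\nu(E(z))\approx r_{j(z)}^Q$ from Proposition \ref{Ahlfor-boundary}). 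In the critical case $\beta=\epsilon(1+Q)$ the exponential factor is identically $1$, and the remaining $(|z|+C)^{-\lambda}$ is bounded because $\lambda\geq 0$ and $C>0$, giving $\|\tilde f^*\|_{L^1(\bx)}\lesssim |f(0)|+\|g_f\|_{L^1(X,\mul)}$.

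For $p>1$ with $\lambda>p-1$, the Hölder weight $r_j^\kappa$ used in Theorem \ref{th2} fails, since $\theta=0$ forces $\kappa=0$. The key idea is to replace it with a polynomial weight in the logarithmic scale: fix $\sigma$ with $(p-1)/p<\sigma\leq \lambda/p$, which is possible precisely because $\lambda>p-1$. Writing $r_j\dashint g_f\,d\mul=(j+1)^{-\sigma}\cdot(j+1)^{\sigma}r_j\dashint g_f\,d\mul$ and applying Hölder in the $j$-sum,
\[
\Bigl(\sum_{j} r_j\dashint g_f\,d\mul\Bigr)^{p}\leq \Bigl(\sum_{j}(j+1)^{-\sigma p'}\Bigr)^{p/p'}\sum_{j}(j+1)^{\sigma p}r_j^{p}\Bigl(\dashint g_f\,d\mul\Bigr)^{p},
\]
where the first factor is finite because $\sigma p'>1$. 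Jensen gives $(\dashint g_f\,d\mul)^p\leq\dashint g_f^p\,d\mul$, and converting the average to an integral introduces $\mul([x_j,x_{j+1}])^{-1}\approx r_j^{-\beta/\epsilon}(j+1)^{-\lambda}$. Applying Fubini and using $\nu(E(z))\approx r_{j(z)}^Q$, the $r_j$-powers collapse since $p-\beta/\epsilon+Q=0$, leaving
\[
\int_{\bx}|\tilde f^*|^p\,d\nu\lesssim |f(0)|^p+\sum_{j}(j+1)^{\sigma p-\lambda}\int_{\{j\leq |z|\leq j+1\}} g_f^p\,d\mul\lesssim |f(0)|^p+\|g_f\|_{L^p(X,\mul)}^p,
\]
because $\sigma p\leq\lambda$. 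A short standard argument on a neighborhood of the root (using $|f(0)-f(x_1)|\leq \int_{[0,x_1]} g_f\,ds_X$ together with an average of $|f|$ over a ball) bounds $|f(0)|$ by $\|f\|_{\Newton}$.

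The main obstacle is the loss of the exponential buffer in the critical case: the Hölder weight $r_j^\kappa$ from Theorem \ref{th2} is no longer available. The fix is to use the much more delicate logarithmic weight $(j+1)^\sigma$, whose dual sum is only borderline convergent, and the compatibility constraint $\sigma\leq \lambda/p$ together with the summability constraint $\sigma>(p-1)/p$ is exactly what pins down the sharp condition $\lambda>p-1$.
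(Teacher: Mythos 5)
Your argument is correct and is essentially the paper's own proof: the paper inserts the weight $j^{-\lambda/p}\,j^{\lambda/p}$ before applying H\"older, which is exactly your choice $\sigma=\lambda/p$ (the endpoint of your admissible range $(p-1)/p<\sigma\leq\lambda/p$), and the rest — Jensen, conversion via $\mul([x_j,x_{j+1}])\approx r_j^{\beta/\epsilon}j^{\lambda}$, Fubini, $\nu(E(x))\approx r_{j(x)}^{Q}$, and the cancellation $p-\beta/\epsilon+Q=0$ — matches the paper step for step. The $p=1$ case is likewise handled identically.
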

\begin{proof}
Let $f\in \Newton$. We first  show that the limit in \eqref{trace-operator} exists for $\nu$-a.e. $\xi\in \bx$. It suffices to show that the function ${\tilde f}^*$ defined by \eqref{trace-operator1} is in $L^p(\bx)$. By estimates \eqref{relation1} and \eqref{estimate-f}, we obtain that
$${\tilde f}^*(\xi) \lesssim  |f(0)|+\sum_{j=0}^{+\infty}{r_j}\dashint_{[x_{j}, x_{j+1}]} g_f \, d\mul.$$
Insert $j^{-\lambda/p}\, j^{\lbd/p}$ into the above sum. If $p>1$,  the  H\"older inequality gives us that
\begin{align*}
|{\tilde f}^*(\xi) |^p&\lesssim |f(0)|^p+\left(\sum_{j=0}^{+\infty} j^{\frac{-\lambda}{p}\cdot\frac{p}{p-1}}\right)^{p-1}\left(\sum_{j=0}^{+\infty} r_j^{p}j^{\lambda}\dashint_{[x_j, x_{j+1}]}  {g_f}^p \, d\mul\right)\\
&\lesssim  |f(0)|^p+ \sum_{j=0}^{+\infty} r_j^{p-\beta/\epsilon} \int_{[x_j, x_{j+1}]}  {g_f}^p \, d\mul,
\end{align*}
since  $\mu([x_j, x_{j+1}])\approx r_j^{\beta/\epsilon} j^\lambda$ and for $\lambda>p-1$, the sum $j^{-\lambda/(p-1)}$ converges.
If $p=1$, then the  H\"older inequality is not needed and the estimate is simpler. It follows that
$$|{\tilde f}^*(\xi) |^p  \lesssim |f(0)|^p+ \sum_{j=0}^{+\infty} r_j^{p-\beta/\epsilon} \int_{[x_j, x_{j+1}]}  {g_f}^p \, d\mul$$
for any $\lambda>p-1$ if $p=1$ or for $\lambda\geq 0$ if $p=1$. Integrating over all $\xi\in \bx$ we obtain by means of Fubini's theorem that
\begin{align*}
\int_\bx |{\tilde f}^*(\xi)|^p\, d\nu&\lesssim |f(0)|^p+\int_\bx \sum_{j=0}^{+\infty} r_j^{p-\beta/\epsilon} \int_{[x_{j}(\xi), x_{j+1}(\xi)]} {g_f}^p \, d\mul\, d\nu(\xi)\\
&=|f(0)|^p+\int_X g_f(x)^p \int_\bx \sum_{j=0}^{+\infty} r_j^{p-\beta/\epsilon} \chi_{[x_j(\xi), x_{j+1}(\xi)]}(x)\, d\nu(\xi)\, d\mul(x)\\
&\lesssim |f(0)|^p+\int_X g_f(x)^p r_{j(x)}^{p-\beta/\epsilon} \nu(E(x))\, d\mul(x)\\
&\lesssim |f(0)|^p+\int_{X}g_f(x)^p r_{j(x)}^{p-\beta/\epsilon+Q}\, d\mul(x)=|f(0)|^p+\int_{X}g_f(x)^p \, d\mul(x).
\end{align*}
Here in the above estimates, the notations $E(x)$ and $j(x)$ are the same ones as those we used in the proof of Theorem \ref{th2}. 
It follows that ${\tilde f}^*$ is in $L^p(\bx)$ with the estimate
$$\int_\bx |\tilde f|^p\, d\nu\lesssim \int_X |f|^p\, d\mul + \int_X {g_f}^p\, d\mul.$$
Hence the limit in the definition of our trace operator exists, i.e., the trace operator is well-defined, and we also have the estimate 
\begin{equation*}
\|\tilde f\|_{L^p(\bx)}\lesssim \left(\int_X |f|^p\, d\mul + \int_X {g_f}^p\, d\mul\right)^{1/p}=\|f\|_{\Newton},
\end{equation*}
which finishes the proof.
\end{proof}

\begin{example}\label{ex1}\rm
Let $f$ be the continuous function on $X$ given by $f(x)=\log(|x|+1)$. Then the function $g_f(x)=e^{\epsilon |x|}/(|x|+1)$ is an upper gradient of $f$ on $X$ with respect to the metric $d_X$. For $p=(\beta-\log K)/\epsilon >1$ and $\lambda=p-1-\delta$ with $\delta>0$ arbitrary, we have the estimates
$$\int_X {g_f}^p\, d\mul\approx \sum_{n=0}^{+\infty} \frac{e^{p\epsilon n}}{(n+1)^p} K^n e^{-\beta n} n^\lambda \approx \sum_{n=0}^{+\infty} \frac{e^{(p\epsilon-\beta+\log K) n}}{(n+1)^{1+\delta}}=\sum_{n=1}^{+\infty} \frac{1}{n^{1+\delta}}<\infty$$
and
$$\int_X |f|^p\, d\mul\approx \sum_{n=0}^{+\infty} \log^p(n+1) K^ne^{-\beta n}n^\lambda\approx \sum_{n=0}^{+\infty} e^{(-\beta+\log K)n} n^\lambda \log^p(n+1)<\infty.$$
\end{example}
Hence we have $f\in \Newton$. 
On the other hand, $f(x)\rightarrow \infty$ as $x\rarrow \bx$.

\begin{lem}\label{L_p>n}
Let $u\in L^1(\bx)$ and $\tilde u$ be defined by  \eqref{extension-operator1}, \eqref{extension-operator2} and \eqref{extension-operator3}. Then
$$\int_{X\cap\{|x|\geq n\}} |\tilde u|^p\, d\mu\lesssim r_n^{(\beta-\log K)/\epsilon} \int_\bx |u|^p\, d\nu,$$
where $n\in \mathbb N$, $p\geq 1$ and $r_n=2^{-n\epsilon}/\epsilon$.
\end{lem}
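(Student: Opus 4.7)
The plan is to reduce everything to the per-edge $L^p$-bound for $\tilde{u}$ already derived in the proof of Theorem \ref{th2} and then sum the resulting geometric tail. Note that since the weight is $\mu = \mu_0$ (i.e.\ $\lambda = 0$), the estimate \eqref{lem1} specializes to
\[
\int_{[x,y]} |\tilde u(t)|^p\, d\mu \;\lesssim\; e^{-\beta k + \epsilon k Q}\int_{I_x} |u|^p\, d\nu
\]
whenever $|x|=k$ and $y$ is a child of $x$, where $Q = (\log K)/\epsilon$. This is the only ingredient about the extension operator I will need.

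First I would write the integral over $X \cap \{|x| \geq n\}$ as a sum over edges of $X$ whose upper endpoint $x$ satisfies $|x| \geq n$, and apply the above edge-wise bound to each. This gives
\[
\int_{X \cap \{|x|\geq n\}} |\tilde u|^p\, d\mu \;\lesssim\; \sum_{k=n}^{\infty} e^{(-\beta+\epsilon Q)k} \sum_{I \in \dyadic_k} \int_I |u|^p\, d\nu.
\]
For each fixed $k$, the family $\dyadic_k$ is a partition of $\partial X$ (up to a null set), so $\sum_{I\in \dyadic_k}\int_I |u|^p\, d\nu = \int_{\partial X}|u|^p\, d\nu$, pulling the $L^p$-norm out of the $k$-sum.

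Next I would evaluate the remaining geometric sum. Because $\beta > \log K$ by standing assumption, we have $-\beta + \epsilon Q = -(\beta - \log K) < 0$, so
\[
\sum_{k=n}^{\infty} e^{(-\beta+\epsilon Q)k} \;\approx\; e^{-(\beta-\log K)n}.
\]
Since $r_n \approx e^{-\epsilon n}$, we have $r_n^{(\beta-\log K)/\epsilon} \approx e^{-(\beta-\log K)n}$, which matches the prefactor exactly. Combining the two displays yields
\[
\int_{X\cap\{|x|\geq n\}}|\tilde u|^p\, d\mu \;\lesssim\; r_n^{(\beta-\log K)/\epsilon} \int_{\partial X}|u|^p\, d\nu,
\]
which is the claim.

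There is no real obstacle here; the lemma is essentially a refined tail version of the $L^p$-bound \eqref{extension-L_p} used in the proof of Theorem \ref{th2}, and the only extra observation is that restricting the sum to $k \geq n$ replaces the convergent series by its geometric tail, which is precisely $r_n^{(\beta-\log K)/\epsilon}$. The hypothesis $\beta > \log K$ is used exactly once, to guarantee that the geometric ratio is less than one and to identify the tail with the prefactor claimed in the statement.
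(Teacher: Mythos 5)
Your proof is correct and follows essentially the same route as the paper: both apply the edge-wise bound $\int_{[x,y]}|\tilde u|^p\,d\mu\lesssim e^{(-\beta+\epsilon Q)k}\int_{I_x}|u|^p\,d\nu$ (the $\lambda=0$ case of \eqref{lem1}), sum over edges with $|x|\geq n$ using that $\dyadic_k$ partitions $\bx$, and identify the geometric tail $\sum_{k\geq n}e^{-(\beta-\log K)k}\approx r_n^{(\beta-\log K)/\epsilon}$. No gaps.
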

\begin{proof}
By using the estimate \eqref{L_p-relation}, for $x, y\in X$ with $y$ a child of $x$ and $|x|=j$, we obtain that
$$\int_{[x, y]} |\tilde u(t)|^p\, d\mu\lesssim \mu([x, y]) (|u_{I_x}|^p+|u_{I_x}|^p)\lesssim e^{-\beta j+\epsilon j Q}\int_{I_x} |u|^p\, d\nu.$$
Summing up the integrals over all edges of $X\cap\{|x|\geq n\}$, we obtain that
\begin{align*}
\int_{X\cap\{|x|\geq n\}} |\tilde u|^p\, d\mu&\lesssim \sum_{j=n}^{+\infty} \sum_{I\in \dyadic_j}  e^{-\beta j+\epsilon j Q}  \int_{I} |u|^p\, d\nu= \sum_{j=n}^{+\infty}e^{-\beta j+\epsilon j Q}   \int_{\bx}  |u|^p\, d\nu \\
& \approx e^{-(\beta-\log K) n}\int_{\bx}  |u|^p\, d\nu \approx r_n^{(\beta-\log K)/\epsilon} \int_\bx |u|^p\, d\nu.
\end{align*}
\end{proof}

\begin{lem}\label{Lip>n}
Let $u$ be  Lipschitz continuous on $\bx$ and $\tilde u$ be defined by \eqref{extension-operator1}, \eqref{extension-operator2} and \eqref{extension-operator3}. Then
$$\int_{X\cap\{|x|\geq n\}}|g_{\tilde u}|^p\, d\mu\lesssim  r_n^{(\beta-\log K)/\epsilon}\LIP(u, \bx)^p,$$
where $r_n=2e^{-n\epsilon}/\epsilon$, $p\geq 1$ and 
$$\LIP(u, \bx)=\sup_{\xi, \zeta\in \bx: \xi\not=\zeta}\frac{|u(\xi)-u(\zeta)|}{d_X(\xi, \zeta)}.$$
\end{lem}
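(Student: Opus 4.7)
\noindent\textbf{Proof plan for Lemma \ref{Lip>n}.}
The plan is to exploit the explicit formula \eqref{extension-operator2} for $g_{\tilde u}$ on each edge together with the Lipschitz bound to show that $|g_{\tilde u}|$ is uniformly controlled by $\LIP(u,\bx)$, and then to compute the measure of the tail region $X\cap\{|x|\geq n\}$.

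First, I would fix an edge $[x,y]$ with $x\in V_j$ and $y$ a child of $x$, so that $\widehat{I_y}=I_x$ and $|I_x|, |I_y|$ are both comparable to $e^{-\epsilon j}$ in the visual metric. By \eqref{extension-operator2} the function $g_{\tilde u}$ equals the constant
\[
g_{\tilde u}(t)=\frac{\epsilon\bigl(u_{I_y}-u_{I_x}\bigr)}{(1-e^{-\epsilon})e^{-\epsilon j}}
\]
on $[x,y]$. For $\xi\in I_y$ and $\zeta\in I_x$ both geodesics pass through $x\in V_j$, so $d_X(\xi,\zeta)\lesssim e^{-\epsilon j}$; the Lipschitz hypothesis then gives
\[
|u_{I_y}-u_{I_x}|\leq \dashint_{I_y}\dashint_{I_x} |u(\xi)-u(\zeta)|\, d\nu(\xi)\, d\nu(\zeta)\lesssim \LIP(u,\bx)\, e^{-\epsilon j}.
\]
Dividing by $(1-e^{-\epsilon})e^{-\epsilon j}/\epsilon$ cancels the $e^{-\epsilon j}$ and produces the pointwise estimate $|g_{\tilde u}(t)|\lesssim \LIP(u,\bx)$, with a constant depending only on $\epsilon$.

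With this uniform bound in hand, the integration reduces to computing $\mu\bigl(X\cap\{|x|\geq n\}\bigr)$. On each level $j\geq n$ there are $K^j$ vertices, and the $\mu$-mass of the edges at that level is comparable to $K^j e^{-\beta j}$; summing the geometric series gives
\[
\mu\bigl(X\cap\{|x|\geq n\}\bigr)\approx \sum_{j=n}^{\infty} e^{-(\beta-\log K)j}\approx e^{-(\beta-\log K)n}\approx r_n^{(\beta-\log K)/\epsilon},
\]
using $\beta>\log K$ and $r_n=2e^{-n\epsilon}/\epsilon$. Combining the two estimates yields
\[
\int_{X\cap\{|x|\geq n\}}|g_{\tilde u}|^p\, d\mu\lesssim \LIP(u,\bx)^p\,\mu\bigl(X\cap\{|x|\geq n\}\bigr)\lesssim r_n^{(\beta-\log K)/\epsilon}\LIP(u,\bx)^p,
\]
which is the claim. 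The only mildly delicate step is the diameter bound on $I_x$; this follows from the construction of $\bx$ via the uniformizing metric \eqref{metric}, since two boundary points whose geodesics coincide up to level $j$ lie at visual distance $\tfrac{2}{\epsilon}e^{-\epsilon k}\leq \tfrac{2}{\epsilon}e^{-\epsilon j}$, but this is already used repeatedly in Section \ref{s-regular}. No further obstacle is expected.
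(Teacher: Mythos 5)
Your proposal is correct and follows essentially the same route as the paper: both use the constancy of $g_{\tilde u}$ on each edge, the bound $|u_{I_y}-u_{I_x}|\lesssim \LIP(u,\bx)\diam(I_x)\approx \LIP(u,\bx)e^{-\epsilon j}$, and a geometric series in $e^{-(\beta-\log K)j}$ over the levels $j\geq n$. The only (harmless) difference is one of bookkeeping: you first extract the uniform pointwise bound $|g_{\tilde u}|\lesssim\LIP(u,\bx)$ and then multiply by $\mu(X\cap\{|x|\geq n\})$, whereas the paper keeps the factor $|u_I-u_{\widehat I}|^p$ through the sum over edges and inserts the Lipschitz estimate at the end.
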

\begin{proof}
For $x, y\in X$ with $y$ a child of $x$ and $|x|=j$, since $g_{\tilde u}$ is a constant  on the edge $[x, y]$, we obtain the estimate 
$$\int_{[x, y]} |g_{\tilde u}|^p\, d\mu \approx \int_j^{j+1} \frac{ |u_{I_y}-u_{\widehat I_y}|^p}{e^{-\epsilon jp}} e^{-\beta\tau}\, d\tau\approx e^{-\beta j+\epsilon jp}|u_{I_y}-u_{\widehat I_y}|^p .$$

Summing up the above integrals over all edges of $X\cap\{|x|\geq n\}$, we obtain that
$$\int_{X\cap\{|x|\geq n\}} |g_{\tilde u}|^p\, d\mu \approx \sum_{j=n+1}^{+\infty}\sum_{I\in \dyadic_j} e^{(-\beta+\epsilon p ) j} |u_I-u_{\widehat I}|^p.$$
Since $u$ is Lipschitz on $\bx$, then for any $\xi, \zeta \in \bx$,
$$|f(\xi)-f(\zeta)|\leq \LIP(u, \bx)d_X(\xi, \zeta).$$
Hence, for any $I\in \dyadic_j$, we have that
\begin{align*}
|u_I-u_{\widehat I}|^p&\lesssim \dashint_I\dashint_{\widehat I} |f(\xi)-f(\zeta)|^p\,d\nu(\xi)\, d\nu(\zeta)\leq \dashint_I\dashint_{\widehat I}\LIP(u, \bx)^p d_X(\xi, \zeta)^p\,d\nu(\xi)\, d\nu(\zeta)\\
&\leq \LIP(u, \bx)^p \diam\big(\widehat I\big)^p\approx e^{-j\epsilon p} \LIP(u, \bx)^p.
\end{align*}
It follows that
\begin{align*}
\int_{X\cap\{|x|\geq n\}}|g_{\tilde u}|^p\, d\mu&\lesssim \sum_{j=n+1}^{+\infty} K^j e^{(-\beta+\epsilon p) j} e^{-j\epsilon p}\LIP(u, \bx)^p\\
& =\sum_{j=n+1}^{+\infty} e^{-(\beta-\log K) j}\LIP(u, \bx)^p \\
&\approx e^{-(\beta-\log K) n} \LIP(u, \bx)^p\approx  r_n^{(\beta-\log K)/\epsilon}\LIP(u, \bx)^p.
\end{align*}
\end{proof}

\begin{prop}\label{prop3.2}
Let $p=(\beta-\log K)/\epsilon\geq 1$. Then there exists a bounded non-linear extension operator $\Ex$ from $L^p(\bx)$ to $N^{1, p}(X)$ that acts as a right inverse of the trace operator $\Tr$ in \eqref{trace-operator}, i.e., $\Tr \circ\Ex=\Id$ on $L^p(\bx)$.
\end{prop}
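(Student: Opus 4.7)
The strategy is a nonlinear Whitney-type extension. For $p = (\beta-\log K)/\epsilon$ we are in the critical case $\theta = 0$, and the Besov-type energy
$\sum_n \sum_{I \in \dyadic_n} \nu(I) |u_I - u_{\widehat I}|^p$
that controls the gradient of the linear extension in the proof of Theorem \ref{th2} need not be finite for a generic $u \in L^p(\bx)$. We therefore approximate $u$ by Lipschitz functions $u_k$ on $\bx$ and use the linear extension of $u_k$ only on a shell of $X$ whose depth grows with $k$, exploiting the tail estimates of Lemmas \ref{L_p>n} and \ref{Lip>n}. The Lipschitz approximations are available because $\bx$ is a compact Ahlfors regular metric space.

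Concretely, set $u_0 \equiv 0$, and for each $k \ge 1$ pick $u_k \in \Lip(\bx)$ with $\|u - u_k\|_{L^p(\bx)} \le 2^{-k}\|u\|_{L^p(\bx)}$; in particular $\|u_k\|_{L^p(\bx)} \le 2\|u\|_{L^p(\bx)}$. Set $L_k := \LIP(u_k, \bx)$, and choose a strictly increasing sequence $0 = n_0 < n_1 < n_2 < \cdots$ of integers with $e^{-p\epsilon n_k} L_k^p \le 2^{-k}\|u\|_{L^p(\bx)}^p$ for every $k \ge 1$. Define $\Ex u$ at a vertex $x$ of $X$ with $|x| = n$ by $\Ex u(x) := (u_{k(n)})_{I_x}$, where $k(n) := \max\{k : n_k \le n\}$, and extend linearly in the metric $d_X$ on each edge. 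Thus $\Ex u$ coincides with the linear extension $\tilde u_k$ of $u_k$ from the proof of Theorem \ref{th2} on the shell $S_k := \{x \in X : n_k \le |x| \le n_{k+1}\}$, and on the single transition edge from a vertex $y$ of level $n_{k+1}$ to its child $x$ of level $n_{k+1}+1$ it linearly interpolates between $(u_k)_{I_y}$ and $(u_{k+1})_{I_x}$.

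The norm bound $\|\Ex u\|_{N^{1,p}(X)} \lesssim \|u\|_{L^p(\bx)}$ is checked shell by shell. By Lemma \ref{L_p>n} applied to $u_k$, the $L^p$-contribution of $S_k$ is at most $r_{n_k}^{(\beta-\log K)/\epsilon}\|u_k\|_{L^p(\bx)}^p \approx e^{-p\epsilon n_k}\|u\|_{L^p(\bx)}^p$, which sums in $k$ to $\lesssim \|u\|_{L^p(\bx)}^p$. By Lemma \ref{Lip>n} applied to $u_k$, the gradient contribution on $S_k$ is bounded by $r_{n_k}^{(\beta-\log K)/\epsilon} L_k^p \le 2^{-k}\|u\|_{L^p(\bx)}^p$, also summable. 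For the transition edges at level $n_{k+1}$ a direct edgewise computation is needed: using $\mu([y,x]) \approx e^{-\beta n_{k+1}}$, $d_X(y,x) \approx e^{-\epsilon n_{k+1}}$, and the identities $p\epsilon = \beta - \log K$ and $\epsilon Q = \log K$, the sum of $|g|^p \mu(\text{edge})$ over all such edges telescopes to $\lesssim L_k^p e^{-p\epsilon n_{k+1}} + \|u_k - u_{k+1}\|^p_{L^p(\bx)}$, which is $\lesssim 2^{-k}\|u\|^p_{L^p(\bx)}$ by our choices.

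For the trace, arrange in advance that $\{u_k\}$ contains a subsequence converging to $u$ pointwise $\nu$-a.e.\ (possible since $L^p$-convergence yields a.e.\ convergent subsequences). For $\nu$-a.e.\ $\xi \in \bx$ and $x$ on the geodesic $[0,\xi)$ with large $|x|$, we have $|\Ex u(x) - u_{k(|x|)}(\xi)| \le L_{k(|x|)}\diam(I_x) \lesssim L_{k(|x|)} e^{-\epsilon n_{k(|x|)}} \to 0$ by our choice of $n_k$, and this together with $u_{k(|x|)}(\xi) \to u(\xi)$ gives $\Tr \Ex u(\xi) = u(\xi)$. The nonlinearity of $\Ex$ comes from the dependence of the sequence $\{u_k, n_k\}$ on $u$; the main obstacle is to ensure that this nonlinear dependence is nonetheless compatible with the \emph{uniform} bound $\|\Ex u\|_{N^{1,p}(X)} \lesssim \|u\|_{L^p(\bx)}$, which is achieved by calibrating the switching levels $n_k$ against the Lipschitz constants $L_k$ and the norm $\|u\|_{L^p(\bx)}$ as above.
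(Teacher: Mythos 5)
Your proposal is correct and follows essentially the same route as the paper: a Gagliardo-type gluing of the linear (Whitney-type) extensions of Lipschitz approximations $u_k$ on layers whose depths are calibrated against the Lipschitz constants $L_k$, with Lemmas \ref{L_p>n} and \ref{Lip>n} supplying the tail estimates; the only difference is that you switch between consecutive approximations on a single transition edge, while the paper interpolates with a Lipschitz partition of unity $\{\psi_k\}$, which changes nothing essential. One small point: for $\Tr \Ex u = u$ $\nu$-a.e.\ you need the full sequence $u_k(\xi)\to u(\xi)$ a.e.\ (not merely a subsequence, since $k(|x|)$ runs through all indices as $x\to\xi$), but this is automatic from the geometric rate $\|u-u_k\|_{L^p(\bx)}\le 2^{-k}\|u\|_{L^p(\bx)}$, exactly as in the paper.
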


The construction of the extension operator is given by gluing the $N^{1, p}$ extensions in Lemma \ref{Lip>n} of Lipschitz approximations of the boundary data with respect to a sequence of layers on the tree $X$. The main idea of the construction is inspired by \cite[Section 7]{Ma} and \cite [Section 4]{MSS} whose core ideas can be traced back to Gagliardo \cite{Ga} who discussed extending functions in $L^{1}(\real^n)$  to $W^{1, 1}(\real^{n+1}_+)$.

\begin{proof}[Proof of Proposition \ref{prop3.2}]
Let $f\in L^p(\bx)$. We approximate $f$ in $L^p(\bx)$ by a sequence of Lipschitz functions $\{f_k\}_{k=1}^{+\infty}$ such that $\|f_{k+1}-f_k\|_{L^p(\bx)} \leq 2^{2-k} \|f\|_{L^p(\bx)}$. Note that this requirement of rate of convergence of $f_k$ to $f$ ensures that $f_k\rarrow f$ pointwise $\nu$-a.e. in $\bx$. For technical reasons, we choose $f_1\equiv 0$. 

Then we choose a decreasing sequence of real numbers $\{\rho_k\}_{k=1}^{+\infty}$ such that

\quad\quad\noindent $\bullet$ $\rho_k\in\{e^{-\epsilon n}/\epsilon: n\in \mathbb N\}$;

\quad\quad\noindent $\bullet$  $0<\rho_{k+1}\leq \rho_k/2$;

\quad\quad\noindent $\bullet$  $\sum_{k} \rho_k \LIP(f_k, \bx)\leq C\|f\|_{L^p(\bx)}$.\\
These will now be used to define layers in $X$. Let 
$$\psi_k(x)=\max\left\{0, \min\left\{1, \frac{\rho_k-\dist(x, \bx)}{\rho_k-\rho_{k+1}}\right\}\right\}, \ \ x\in X.$$
We denote $-\log(\epsilon \rho_k)/\epsilon$ by $[\rho_k]$. This is a integer satisfying $e^{-\epsilon [\rho_k]}/\epsilon =\rho_k$. Then we obtain $0\leq\psi_k\leq 1$ and that
\begin{equation} \label{psi}
\psi_k(x)=\left\{\begin{array}{cl}
0, &\ \ |x|\leq [\rho_k];\\
1, & \ \ |x|\geq [\rho_{k+1}].
\end{array}\right.
\end{equation}
For any Lipschitz function $f_k$, we can define the extension $\tilde f_k$ of $f_k$ by using \eqref{extension-operator1}, \eqref{extension-operator2} and \eqref{extension-operator3}. Then we define the extension of $f$ as
\begin{equation}\label{extension-L_1}
\tilde f(x) :=\sum_{k=2}^{+\infty}(\psi_{k-1}(x)-\psi_k(x))\tilde f_k(x)=\sum_{k=1}^{+\infty}{\psi_k(x)}(\tilde f_{k+1}(x)-\tilde f_k(x)).
\end{equation}
It follows from \eqref{psi} that for any $x\in X$ with $|x|=[\rho_k]$, we have $\tilde f(x)=\tilde f_{k-1}(x)$. Since for the trace operator $\Tr$ defined in \eqref{trace-operator}, $\Tr \tilde f_k= f_k$ for $\nu$-a.e. in $\bx$, the pointwise convergence  $f_k\rarrow f$  $\nu$-a.e. in $\bx$ implies that $\Tr \tilde f=f$ for $\nu$-a.e. in $\bx$, since $\{[\rho_k]\}_{k=1}^{+\infty}$ is a subsequence of $\mathbb N$. Hence the extension operator defined by \eqref{extension-L_1} is a right inverse of the trace operator $\Tr$ in \eqref{trace-operator}.

It remains to show that $\tilde f \in N^{1, p}(X)$ with norm estimates. Lemma \ref{L_p>n} allows us to obtain the $L^p$-estimate for $\tilde f$. Since the extension operator that we apply for each $f_k$ is linear, we have that $\tilde f_{k+1}- \tilde f_k=\widetilde{ f_{k+1}-  f_k}$. Therefore, it follows from $(\beta-\log K)/\epsilon=p$ that 
\begin{align*}
\|\tilde f\|_{L^p(X)}&\leq \sum_{k=1}^{+\infty}\|{\psi_k}(\tilde f_{k+1}-\tilde f_k)\|_{L^p(X)}\leq \sum_{k=1}^{+\infty} \|\tilde f_{k+1}-\tilde f_k\|_{L^p\left(X\cap\{|x|\geq [\rho_k]\}\right)}\\
& \lesssim \sum_{k=1}^{+\infty}  r_{[\rho_k]} \|f_{k+1}- f_k\|_{L^p(\bx)}\approx \sum_{k=1}^{+\infty}  \rho_k \|f_{k+1}- f_k\|_{L^p(\bx)}\\
&\lesssim \sum_{k=1}^{+\infty} \|f_{k+1}- f_k\|_{L^p(\bx)}\lesssim \|f\|_{L^p(\bx)}.
\end{align*}

In order to obtain the $L^p$-estimate of an upper gradient of $\tilde f$, it suffices to consider the $L^p$-estimate of $\Lip \tilde f$, where for any function $u$, $\Lip u(x)$ is defined as
$$\Lip u(x)=\limsup_{y\rarrow x} \frac{|u(y)-u(x)|}{d_X(x, y)}.$$
We first apply the product rule for locally Lipschitz function, which yields that
\begin{align*}
\Lip \tilde f&=\sum_{k=1}^{+\infty} \left(|\widetilde{ f_{k+1}-  f_k}|\Lip \psi_k+\psi_k\Lip (\widetilde{ f_{k+1}-  f_k})\right)\\
&\leq\sum_{k=1}^{+\infty} \left( \frac{|\widetilde{ f_{k+1}-  f_k}|\chi_{\{|x|\geq [\rho_k]\}}}{\rho_k-\rho_{k+1}}+\chi_{\{|x|\geq [\rho_k]\}}\Lip (\widetilde{ f_{k+1}-  f_k}) \right).
\end{align*}
 Thus, 
\begin{align*}
\|\Lip \tilde f\|_{L^p(\bx)} \leq \sum_{k=1}^{+\infty} \left( \left\|\frac{|\widetilde{ f_{k+1}-  f_k}|}{\rho_k-\rho_{k+1}}\right\|_{L^p(X\cap\{|x|\geq [\rho_k]\})} +\|\Lip (\widetilde{ f_{k+1}-  f_k})\|_{L^p(X\cap\{|x|\geq [\rho_k]\})}\right).
\end{align*}
It follows from Lemma \ref{L_p>n} that
\begin{align*}
\sum_{k=1}^{+\infty} \left\|\frac{|\widetilde{ f_{k+1}-  f_k}|}{\rho_k-\rho_{k+1}}\right\|_{L^p(X\cap\{|x|\geq [\rho_k]\})}&\lesssim \sum_{k=1}^{+\infty} \frac{\rho_k}{\rho_k-\rho_{k+1}} \| f_{k+1}-  f_k\|_{L^p(\bx)}\\
&\approx  \sum_{k=1}^{+\infty}\| f_{k+1}-  f_k\|_{L^p(\bx)}\lesssim \|f\|_{L^p(\bx)}.
\end{align*}
Recall that $\tilde u$ is affine one any edge of $X$, with ``slope''  $g_{\tilde u}$, for the extension $\tilde u$ given via  \eqref{extension-operator1}, \eqref{extension-operator2} and \eqref{extension-operator3}, for any function $u$. Hence  $\Lip \tilde u=g_{\tilde u}$. Therefore, it follows from  Lemma \ref{Lip>n} that
\begin{align*}
\sum_{k=1}^{+\infty} \|\Lip (\widetilde{ f_{k+1}-  f_k})\|_{L^p(X\cap\{|x|\geq [\rho_k]\})}&\lesssim \sum_{k=1}^{+\infty}  \rho_k \LIP(f_{k+1}-f_k, \bx)\\
&\leq\sum_{k=1}^{+\infty} \rho_k\left(\LIP(f_{k+1}, \bx)+\LIP(f_k, \bx)\right)\\
&\lesssim \|f\|_{L^p(\bx)}.
\end{align*}
Here in the last inequality, we used the defining properties of $\{\rho_k\}_{k=1}^{+\infty}$. Thus, we have shown that
$$\|\Lip \tilde f\|_{L^p(\bx)} \lesssim \|f\|_{L^p(\bx)}.$$

Altogether, we obtain that
$$\|\tilde f\|_{N^{1, p}(X)}\leq\| \tilde f\|_{L^p(\bx)}+\|\Lip \tilde f\|_{L^p(\bx)}\lesssim \|f\|_{L^p(\bx)}.$$
\end{proof}

\begin{proof}[Proof of Theorem \ref{th3}]
The boundedness and linearity of the trace operator follows from Proposition \ref{prop3.1} and the sharpness of $\lambda>p-1$ follows from Example \ref{ex1}. The extension operator is given in Proposition \ref{prop3.2}.
\end{proof}

\begin{rem}\rm
For  $p=(\beta-\log K)/\epsilon>1$ and $\lambda>p-1$, Theorem \ref{th3} only tells us that there exists a bounded linear trace operator \eqref{trace-operator} from $N^{1, p}(X, \mu_\lambda)$ to $L^p(\bx)$. It is unknown whether this trace operator is surjective or not. All we know is that there exists a nonlinear bounded extension operator from $L^p(\bx)$ to $N^{1, p}(X)$ that acts as a right inverse of the trace operator \eqref{trace-operator}. Since $\lambda>p-1>0$ 
implies $N^{1, p}(X, \mu_\lambda)\subsetneq N^{1, p}(X)$, we have an open question: Which space does the  bounded linear trace operator \eqref{trace-operator} map $N^{1, p}(X, \mu_\lambda)$ surjectively onto?
\end{rem}

\subsection{Proof of Theorem \ref{th5}}
\begin{proof}[Proof of Theorem \ref{th5}]
{\bf Trace Part:} Let $f\in N^{1,1}(X, \mul)$ with $\lambda>0$ and let $g_f$ be an upper gradient of $f$.
By Proposition \ref{prop3.1}, we know that the trace operator $\Tr f=\tilde f$ defined in \eqref{trace-operator} is well-defined and that $\tilde f$ satisfies the norm estimate
$$\|\tilde f\|_{L^1(\bx)}\lesssim \|f\|_{N^{1,1}(X, \mul)}.$$
Then the remaining task is to establish the estimate on the dyadic energy $\|\tilde f\|_{\dlbesovo}$. For any $I\in \dyadic_{\alpha(n)}$, $\xi\in I$ and $\zeta\in \widetilde I\in \dyadic_{\alpha(n-1)}$, we obtain that
\begin{align*}
|\tilde f(\xi)-\tilde f(\zeta)|&\leq \sum_{j=\alpha(n-1)}^{+\infty}|f(x_j)-f(x_{j+1})|+\sum_{j=\alpha(n-1)}^{+\infty}|f(y_j)-f(y_{j+1})|\\
&\lesssim  \sum_{j=\alpha(n-1)}^{+\infty}r_j\dashint_{[x_j, x_{j+1}]} g_f\, d\mul+ \sum_{j=\alpha(n-1)}^{+\infty}r_j\dashint_{[y_j, y_{j+1}]} g_f\, d\mul,
\end{align*}
where $x_j=x_j(\xi)$ and $y_j=y_j(\zeta)$ are the ancestors of $\xi$ and $\zeta$ with $|x_j|=|y_j|=j$, respectively. For any $I\in \dyadic_{\alpha(n)}$ and any function $h\in L^1(\bx)$, we have 
$$\frac{\nu(I)}{\nu(\widetilde I)}\approx \left(\frac{r_{\alpha(n)}}{r_{\alpha(n-1)}}\right)^Q\approx e^{(\alpha(n-1)-\alpha(n))\log K}\approx K^{\alpha(n-1)-\alpha(n)}$$
and 
\begin{equation}\label{n-n+1}
\sum_{I\in \dyadic_{\alpha(n)}} \int_{\widetilde I} h(\zeta)\, d\nu(\zeta)=K^{\alpha(n)-\alpha(n-1)} \int_{\bx} h(\zeta)\, d\nu(\zeta).
\end{equation}
Hence it follows from  the fact that $\mul([x_j, x_{j+1}])\approx r_j^{\beta/\epsilon}j^\lambda$ and Fubini's theorem that
\begin{align*}
\sum_{I\in\dyadic_{\alpha(n)}}\nu(I)|\tilde f_I-&\tilde f_{\widetilde I}| \leq  \sum_{I\in \dyadic_{\alpha(n)}} \nu(I) \dashint_I\dashint_{\widetilde I} |\tilde f(\xi)-\tilde f(\zeta)|\, d\nu(\xi)\, d\nu(\zeta)\\
& \lesssim \sum_{I\in \dyadic_{\alpha(n)}} \int_I  \sum_{j=\alpha(n-1)}^{+\infty}r_j\dashint_{[x_j(\xi), x_{j+1}(\xi)]} g_f\, d\mul\, d\nu(\xi)\\ 
&\ \ \ \ \ \ \ \ \ +\sum_{I\in \dyadic_{\alpha(n)}} K^{\alpha(n-1)-\alpha(n)} \int_{\widetilde I} \sum_{j=\alpha(n-1)}^{+\infty}r_j\dashint_{[y_j(\zeta), y_{j+1}(\zeta)]} g_f\, d\mul\, d\nu(\zeta)\\
& \approx \int_{\bx} \sum_{j=\alpha(n-1)}^{+\infty}r_j\dashint_{[x_j(\xi), x_{j+1}(\xi)]} g_f\, d\mul\, d\nu(\xi)\\
&\approx\int_{X\cap\{|x|\geq \alpha(n-1)\}} g_f \int_\bx \sum_{j=\alpha(n-1)}^{+\infty} r_j^{1-\beta/\epsilon} j^{-\lambda}\chi_{[x_j(\xi), x_{j+1}(\xi)]}(x) \, d\nu(\xi)\, d\mul(x).
 \end{align*}
Using the notation $E(x)$ and $j(x)$ defined in the proof of Theorem \ref{th2}, the above estimate can be rewritten as
\begin{align*}
\sum_{I\in\dyadic_{\alpha(n)}}\nu(I)|\tilde f_I-\tilde f_{\widetilde I}| &\lesssim \int_{X\cap\{|x|\geq \alpha(n-1)\}} g_f r_{j(x)}^{1-\beta/\epsilon}j(x)^{-\lambda} \nu(E(x))\, d\mul\\
& \lesssim  \int_{X\cap\{|x|\geq \alpha(n-1)\}} g_f r_{j(x)}^{1-\beta/\epsilon+Q}j(x)^{-\lambda}\, d\mul\\
&=\int_{X\cap\{|x|\geq \alpha(n-1)\}} g_f j(x)^{-\lambda}\, d\mul.
\end{align*}
It follows that
\begin{align*}
\sum_{n=1}^{\infty} \alpha(n)^\lambda\sum_{I\in\dyadic_{\alpha(n)}}\nu(I)|f_I-f_{\widetilde I}| &\lesssim \sum_{n=1}^{\infty}\alpha(n)\sum_{j=\alpha(n-1)}^{+\infty}\int_{X\cap\{j+1>|x|\geq j\}} g_f j^{-\lambda}\, d\mul\\
&=\sum_{n=0}^{\infty}\alpha(n+1)\sum_{j=\alpha(n)}^{+\infty}\int_{X\cap\{j+1>|x|\geq j\}} g_f j^{-\lambda}\, d\mul\\
&\leq \sum_{j=0}^{+\infty} \int_{X\cap\{j+1>|x|\geq j\}} g_f j^{-\lambda}\, d\mul\left(\sum_{n=0}^{\alpha^{-1}(j)}\alpha(n+1)^\lambda\right),
\end{align*}
 where $\alpha^{-1}(j)$ is the largest integer $m$ such that $\alpha(m)\leq j$. Since $\lambda>0$ and
$$1<c_0\leq \frac{\alpha(n+1)}{\alpha(n)}\leq c_1,$$
we obtain the estimate
$$\sum_{n=0}^{\alpha^{-1}(j)}\alpha(n+1)^\lambda\approx \sum_{n=0}^{\alpha^{-1}(j)}\alpha(n)^\lambda \leq \sum_{k=0}^{+\infty} j^\lambda c_0^{-\lambda k} \lesssim j^\lambda.$$
Hence we obtain the estimate 
\begin{align*}
\|\tilde f\|_{\dlbesovo}=\sum_{n=1}^{\infty} \alpha(n)^\lambda\sum_{I\in\dyadic_{\alpha(n)}}\nu(I)|f_I-f_{\widetilde I}| &\lesssim \sum_{j=0}^{+\infty} \int_{X\cap\{j+1>|x|\geq j\}} g_f\, d\mul\\
&=\int_X g_f\, d\mul=\|g_f\|_{L^1(X, \mul)}.
\end{align*}
Thus, we obtain the norm estimate 
$$\|f\|_{\lbesovo}=\|f\|_{L^1(\bx)}+\|f\|_{\dlbesovo}\lesssim \|f\|_{N^{1,1}(X, \mul)},$$
which finishes the proof of the Trace Part.

{\bf Extension Part:} Let $u\in \lbesovo$. Since $\alpha(0)$ is not necessarily zero, we let $\alpha(-1)=0$. 
For any $x\in X$ with $|x|=\alpha(n)$ and $-1\leq n\in \mathbb Z$, let 
$$\tilde u(x)=\dashint_{I_x} u\, d\nu,$$
where  $I_x\in\dyadic$ is the set of all the points $\xi\in\bx$ such that the geodesic $[0, \xi)$ passes through $x$, that is, $I_x$ consists of all the points in $\bx$ that have $x$ as an ancestor. 
 
If $y$ is a descendant of $x$ with $|y|=\alpha(n+1)$, then there exists $\tilde y\in X$ which is the parent of $y$. We extend $\tilde u$ to the edge $[x, y]$ as follows: For each $t\in [x, \tilde y]$, set $\tilde u(t)=\tilde u(x)$ and $g_{\tilde u}(t)=0$; for each $t\in [\tilde y, y]$, set
$$g_{\tilde u}(t)= \frac{\tilde u(y)-\tilde u(x)}{d_X(\tilde y, y)}=\frac{\epsilon (u_{I_y}-u_{I_x})}{(e^{\epsilon}-1)e^{-\epsilon \alpha(n+1)}}=\frac{\epsilon (u_{I_y}-u_{\widetilde I_y})}{(e^{\epsilon}-1)e^{-\epsilon \alpha(n+1)}}$$
and
$$\tilde u(t)=\tilde u(x)+ g_{\tilde u}(t) d_X(\tilde y, t).$$
Then we define $\tilde u$ to be the extension of $u$. Notice that $\Tr \tilde u(\xi)=u(\xi)$ whenever $\xi$ is a Lebesgue point of $u$. 

Now on the geodesic $[x, \tilde y]$, $g_{\tilde u}$ is zero and $\tilde u$ is a constant; on the edge $[\tilde y, y]$, $g_{\tilde u}$ is a constant and $\tilde u$ is linear with respect to the metric on the edge $[\tilde y, x]$. It follows that  $|g_{\tilde u}|$ is an upper gradient of $\tilde u$ on the geodesic $[x, y]$.  Then for $x\in X$ with $|x|=\alpha(n)$, $n\geq 0$, we obtain the estimate
\begin{align}
\int_{[x, y]} |g_{\tilde u}|\, d\mul =\int_{[\tilde y, y]}  |g_{\tilde u}|\, d\mul &\approx \int_{\alpha(n+1)-1}^{\alpha(n+1)} \frac{|u_{I_y}-u_{\widetilde I_y}|}{e^{-\epsilon \alpha(n+1)}} e^{-\beta\tau} (t+C)^\lambda\, d\tau\notag\\
&\approx e^{(\epsilon-\beta)\alpha(n+1)}\alpha(n+1)^\lambda |u_{I_y}-u_{\widetilde I_y}|.\label{add-1}
\end{align}
For $x=0$ and $|y|=\alpha(0)$, since $\nu(I_0)\approx \nu(I_y)\approx 1$, we have the estimate
\begin{equation}\label{add-2}
\int_{[0, y]}  |g_{\tilde u}|\, d\mul =\int_{[\tilde y, y]}  |g_{\tilde u}|\, d\mul  \approx |u_{I_0}-u_{I_y}|\leq |u_{I_0}|+|u_{I_y}|\lesssim \int_\bx |u|\, d\nu.
\end{equation}
Now sum up the estimates \eqref{add-1} and \eqref{add-2} over all edges of $X$ to obtain that
\begin{align*}
\int_X  |g_{\tilde u}|\, d\mul&=\int_{X\cap\{|x|\leq \alpha(0)\}} |g_{\tilde u}|\, d\mul+\int_{X\cap\{|x|\geq \alpha(0)\}} |g_{\tilde u}|\, d\mul\\
&\lesssim \sum_{y\in V_{\alpha(0)}} \int_{[0, y]}  |g_{\tilde u}|\, d\mul +\sum_{n=1}^{+\infty}\sum_{y\in V_{\alpha(n)} } \int_{[x, y]} |g_{\tilde u}|\, d\mul\\
&\lesssim K^{\alpha(0)} \int_\bx |u|\, d\nu+\sum_{n=1}^{+\infty}\sum_{I\in \dyadic_{\alpha(n)}}e^{(\epsilon-\beta)\alpha(n)}\alpha(n)^\lambda |u_{I}-u_{\widetilde I}|.
\end{align*}
Since for any $I\in \dyadic_{\alpha(n)}$, we have that
$$\nu(I) \approx r_{\alpha(n)}^Q\approx e^{-\epsilon \alpha(n) \log K/\epsilon}=e^{-\alpha(n) \log K}=e^{(\epsilon-\beta)\alpha(n)}.$$
Hence we obtain the estimate
 \begin{align}
 \int_X  |g_{\tilde u}|\, d\mul&\lesssim  \int_\bx |u|\, d\nu+ \sum_{n=1}^{\infty} \alpha(n)^\lambda\sum_{I\in\dyadic_{\alpha(n)}}\nu(I)|f_I-f_{\widetilde I}|\notag\\
& =\|u\|_{L^1(\bx)}+\|u\|_{\dlbesovo}=\|u\|_{\lbesovo}.\label{extension-p=1}
\end{align}

Towards the $L^1$-estimate for $\tilde u$, by the construction, we know that $|\tilde u(t)|=|\tilde u(x)|$ on the geodesic $[x, \tilde y]$ and that $|\tilde u(t)|\lesssim |\tilde u(x)|+|\tilde u(y)|$ on the edge $[\tilde y, y]$. Then for $n\geq -1$,  we have the estimate 
\begin{align*}
&\ \ \ \ \ \int_{X\cap\{\alpha(n)\leq |x|\leq \alpha(n+1)\}} |\tilde u|\, d\mul\\
&=\int_{X\cap\{\alpha(n)\leq |x|\leq \alpha(n+1)-1\}} |\tilde u|\, d\mul+\int_{X\cap\{\alpha(n+1)-1\leq |x|\leq \alpha(n+1)\}} |\tilde u|\, d\mul\\
&\leq \sum_{x\in V_{\alpha(n)}} |u(x)| \mul(F(x, d_X(x, \bx)))+ \sum_{y\in V_{\alpha(n+1)}} (|\tilde u(x)|+|\tilde u(y)|) \mul([\tilde y, y])=:H^n_1+H^n_2.
\end{align*}
By Lemma \ref{lF}, we obtain the estimate 
$$H^n_1 \lesssim \sum_{x\in V_{\alpha(n)}} e^{(-\beta +\log K)\alpha(n)}\alpha(n)^\lambda \int_{I_x} |u|\, d\nu= e^{(-\beta +\log K)\alpha(n)}\alpha(n)^\lambda\int_\bx |u|\, d\nu.$$
For $H^n_2$, by \eqref{n-n+1} and relation \eqref{relation1}, we have that
\begin{align*}
H^n_2&\lesssim \sum_{y\in V_{\alpha(n+1)}} e^{(-\beta+\log K)\alpha(n+1)} \alpha(n+1)^\lambda\left( \int_{I_y} |u|\, d\nu+ K^{\alpha(n)-\alpha(n+1)}\int_{\widetilde I_y} |u|\, d\nu\right)\\
&\lesssim e^{(-\beta+\log K)\alpha(n+1)} \alpha(n+1)^\lambda \int_\bx |u|\, d\nu.
\end{align*}
Sum up the above estimate with respect to $n$ to obtain via $\epsilon= \beta-\log K$ that
\begin{align}
\int_X |\tilde u|\, d\mul &=\sum_{n=-1}^{+\infty} \int_{X\cap\{\alpha(n)\leq |x|\leq \alpha(n+1)\}} |\tilde u|\, d\mul=\sum_{n=-1}^{+\infty} H^n_1+H^n_2\notag\\
&\lesssim\sum_{n=-1}^{+\infty}e^{(-\beta +\log K)\alpha(n)}\alpha(n)^\lambda\int_\bx |u|\, d\nu\notag\\
&=\sum_{n=-1}^{+\infty} e^{-\epsilon \alpha(n)}\alpha(n)^\lambda \int_\bx |u|\, d\nu\lesssim  \int_\bx |u|\, d\nu=\|u\|_{L^1(\bx)}.\label{extension-p-1}
\end{align}

By the  estimates \eqref{extension-p=1} and \eqref{extension-p-1}, we obtain the norm estimate
$$\|\tilde u\|_{N^{1, 1}(X, \mul)}\lesssim \|u\|_{\lbesovo}.$$

\end{proof}

\begin{cor}\label{comparable}
For given sequences $\{\alpha_1(n)\}_{n\in \N}$ and $\{\alpha_1(n)\}_{n\in \N}$ satisfying the relation \eqref{sequence} with respect to different pairs of $(c_0, c_1)$, the Banach spaces ${\mathcal B}^{0, \lambda}_{\alpha_1}(\bx)$ and ${\mathcal B}^{0, \lambda}_{\alpha_2}(\bx)$ coincide.
\end{cor}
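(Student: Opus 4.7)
The plan is to use Theorem \ref{th5} as a black box, applied to each sequence separately. The key observation is that the trace operator $T$ from Theorem \ref{th3}/\ref{th5} is defined by geodesic limits in \eqref{trace-operator}, so it is a single operator on $N^{1,1}(X,\mu_\lambda)$ that does not depend on the choice of sequence $\alpha$. Only the extension operator depends on $\alpha$.

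First I would invoke Theorem \ref{th5} twice. For each $i \in \{1,2\}$ we obtain that the (common) trace operator $T$ is bounded from $N^{1,1}(X,\mu_\lambda)$ to ${\mathcal B}^{0,\lambda}_{\alpha_i}(\partial X)$, and that there exists a bounded extension operator $E_i : {\mathcal B}^{0,\lambda}_{\alpha_i}(\partial X) \to N^{1,1}(X,\mu_\lambda)$ with $T\circ E_i = \Id$ on ${\mathcal B}^{0,\lambda}_{\alpha_i}(\partial X)$. This gives constants $C_1,C_2$ (depending on the pair $(c_0,c_1)$ for the corresponding sequence) such that $\|E_i f\|_{N^{1,1}(X,\mu_\lambda)} \leq C_i\|f\|_{{\mathcal B}^{0,\lambda}_{\alpha_i}(\partial X)}$ and $\|Tg\|_{{\mathcal B}^{0,\lambda}_{\alpha_i}(\partial X)} \leq C_i\|g\|_{N^{1,1}(X,\mu_\lambda)}$.

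Next I would prove the containment ${\mathcal B}^{0,\lambda}_{\alpha_1}(\partial X) \subset {\mathcal B}^{0,\lambda}_{\alpha_2}(\partial X)$ with norm control. Given $f \in {\mathcal B}^{0,\lambda}_{\alpha_1}(\partial X)$, we have $f = T(E_1 f)$ a.e. on $\partial X$. Since $E_1 f \in N^{1,1}(X,\mu_\lambda)$ and $T$ is bounded into ${\mathcal B}^{0,\lambda}_{\alpha_2}(\partial X)$, we obtain $f \in {\mathcal B}^{0,\lambda}_{\alpha_2}(\partial X)$ with
$$\|f\|_{{\mathcal B}^{0,\lambda}_{\alpha_2}(\partial X)} = \|T(E_1 f)\|_{{\mathcal B}^{0,\lambda}_{\alpha_2}(\partial X)} \lesssim \|E_1 f\|_{N^{1,1}(X,\mu_\lambda)} \lesssim \|f\|_{{\mathcal B}^{0,\lambda}_{\alpha_1}(\partial X)}.$$
The reverse containment and norm estimate follow by interchanging the roles of $\alpha_1$ and $\alpha_2$, and using $E_2$ together with the boundedness of $T$ into ${\mathcal B}^{0,\lambda}_{\alpha_1}(\partial X)$.

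There is essentially no obstacle here: the bulk of the work has already been done in the proof of Theorem \ref{th5}. The only minor point to verify is that the trace operator $T$ really is independent of $\alpha$; this is immediate from \eqref{trace-operator}, since that definition involves only geodesic rays in $X$ and not the dyadic layers used to define the Besov norms. Thus the two sets coincide and the two norms are equivalent, so ${\mathcal B}^{0,\lambda}_{\alpha_1}(\partial X)$ and ${\mathcal B}^{0,\lambda}_{\alpha_2}(\partial X)$ are the same Banach space.
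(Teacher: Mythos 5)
Your proposal is correct and follows essentially the same route as the paper: extend $f$ via the $\alpha_1$-extension operator into $N^{1,1}(X,\mu_\lambda)$, then apply the trace estimate of Theorem \ref{th5} for $\alpha_2$ to $E_1 f$, using that the trace operator \eqref{trace-operator} does not depend on the choice of sequence. Your explicit remark that $T$ is sequence-independent is exactly the (implicit) key point of the paper's argument.
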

\begin{proof}
For any function $u\in {\mathcal B}^{0, \lambda}_{\alpha_1}(\bx)$, by the Extension part in the proof of Theorem \ref{th5}, there is an extension $Eu=\tilde u$ such that
$$\|\tilde u\|_{N^{1,1}(X, \mul)}\lesssim \|u\|_{{\mathcal B}^{0, \lambda}_{\alpha_1}(\bx)}.$$
Since $u=T\circ Eu=T(\tilde u)$, it follows from the trace part in the proof of Theorem \ref{th5} that we have the estimate
$$\|u\|_{{\mathcal B}^{0, \lambda}_{\alpha_2}(\bx)}\lesssim \|\tilde u\|_{N^{1,1}(X, \mul)}.$$
Thus, we obtain
$$\|u\|_{{\mathcal B}^{0, \lambda}_{\alpha_2}(\bx)}\lesssim \|u\|_{{\mathcal B}^{0, \lambda}_{\alpha_1}(\bx)}.$$
The opposite inequality follows analogously and the claim follows.
\end{proof}

Next, we compare the function spaces $\lbesovo$ and ${\mathcal B}^{0, \lambda}_1(\bx)$.

\begin{prop}\label{compare}
Let $\lambda>0$. The space ${\mathcal B}^{0, \lambda}_1(\bx)$ is a subset of $\lbesovo$, i.e., for any $f\in L^1 (\bx)$, we have
$$\|f\|_{\dlbesovo}\lesssim \|f\|_{\dot{\mathcal B}^{0, \lambda}_1(\bx)}.$$
\end{prop}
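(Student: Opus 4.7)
The plan is to bound the difference $f_I - f_{\widetilde I}$ between averages on $I \in \dyadic_{\alpha(n)}$ and its ancestor $\widetilde I \in \dyadic_{\alpha(n-1)}$ by a telescoping sum over the \emph{intermediate} ancestors in $\dyadic_j$ for $\alpha(n-1) < j \leq \alpha(n)$, and then to swap the order of summation. Concretely, for each $I \in \dyadic_{\alpha(n)}$ and each $j$ in this range, let $I^{(j)}$ denote the unique ancestor of $I$ at level $j$, so that $I^{(\alpha(n))} = I$, $I^{(\alpha(n-1))} = \widetilde I$, and $\widehat{I^{(j)}} = I^{(j-1)}$. A triangle inequality applied to the telescoping identity gives
\begin{equation*}
|f_I - f_{\widetilde I}| \leq \sum_{j=\alpha(n-1)+1}^{\alpha(n)} \left|f_{I^{(j)}} - f_{\widehat{I^{(j)}}}\right|.
\end{equation*}

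Next I would rewrite the inner sum in $\|f\|_{\dlbesovo}$ by grouping each $I \in \dyadic_{\alpha(n)}$ by the identity of its ancestor $I^{(j)}$. For each fixed $J \in \dyadic_j$, the descendants $I \in \dyadic_{\alpha(n)}$ with $I^{(j)} = J$ partition $J$ (up to measure zero), so $\sum_{I \in \dyadic_{\alpha(n)},\, I^{(j)}=J} \nu(I) = \nu(J)$. Thus
\begin{equation*}
\sum_{I \in \dyadic_{\alpha(n)}} \nu(I)\,|f_I - f_{\widetilde I}| \leq \sum_{j=\alpha(n-1)+1}^{\alpha(n)}\; \sum_{J \in \dyadic_j} \nu(J)\,|f_J - f_{\widehat J}|,
\end{equation*}
which is the key step that converts the $\alpha$-sparse dyadic increments into standard one-step dyadic increments.

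Finally, I would swap the order of summation in $n$ and $j$. Each integer $j \geq \alpha(0)+1$ belongs to a unique interval $(\alpha(n-1), \alpha(n)]$, and I denote the corresponding index by $n(j)$. This gives
\begin{equation*}
\|f\|_{\dlbesovo} \leq \sum_{j=\alpha(0)+1}^{\infty} \alpha(n(j))^\lambda \sum_{J \in \dyadic_j} \nu(J)\,|f_J - f_{\widehat J}|.
\end{equation*}
The geometric growth condition \eqref{sequence}, applied in the form $\alpha(n(j)) \leq c_1\,\alpha(n(j)-1) < c_1 j$, yields $\alpha(n(j))^\lambda \lesssim j^\lambda$ for $\lambda > 0$, and the right-hand side is bounded above by $\|f\|_{\dot{\mathcal B}^{0, \lambda}_1(\bx)}$. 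The only mild subtlety is the bookkeeping around $j \leq \alpha(0)$ (which simply does not appear on the left-hand side, so no contribution is lost), and verifying that the growth bound on $\alpha$ is exactly what converts the ``sparse'' weight $\alpha(n(j))^\lambda$ into the ``full'' weight $j^\lambda$ — no other step requires serious work.
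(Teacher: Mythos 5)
Your proof is correct and takes essentially the same route as the paper: telescoping $f_I-f_{\widetilde I}$ through the intermediate ancestors, regrouping by the level-$j$ ancestor $J\in\dyadic_j$ (where you use the exact partition identity $\sum_{I:\,I^{(j)}=J}\nu(I)=\nu(J)$ and the paper instead counts descendants and uses $\nu(I)\approx K^{-\alpha(n)}$), and converting $\alpha(n(j))^\lambda$ into $j^\lambda$ via the growth condition \eqref{sequence}. No gaps.
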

\begin{proof}
Let $f\in L^1(\bx)$. For any $I\in \dyadic_{\alpha(n)}$ with $n\in \real$, define the set 
$$\mathcal J_I:=\{I'\in \dyadic: I\subset I'\subsetneq \widetilde I\}.$$
Then it follows from the triangle inequality that
$$|f_{I}-f_{\widetilde I}|\leq \sum_{I'\in \mathcal J_I} |f_{I'}-f_{\widehat I'}|.$$
Hence, by using Fubini's theorem, we have that
\begin{align*}
\sum_{I\in\dyadic_{\alpha(n)}} \nu(I) |f_{I}-f_{\widetilde I}|&\leq  \sum_{I\in\dyadic_{\alpha(n)}} \nu(I) \sum_{I'\in \mathcal J_I} |f_{I'}-f_{\widehat I'}|\\
& =\sum_{m=\alpha(n-1)+1}^{\alpha(n)} \sum_{I'\in \dyadic_m} |f_{I'}-f_{\widehat I'}| \left(\sum_{I\in \dyadic_{\alpha(n)}}\sum_{I'\in \mathcal J_I} \nu(I)\right).
\end{align*}
Notice that for any $I\in \dyadic_{\alpha(n)}$, we have $\nu(I)\approx e^{-\epsilon \alpha(n)Q} =K^{-\alpha(n)}$ and that for any $I'\in\dyadic_m$, the number of the dyadic elements $I\in \dyadic_{\alpha(n)}$ with $I'\in \mathcal J_I$ is $K^{\alpha(n)-m}$. Therefore, 
$$\sum_{I\in \dyadic_{\alpha(n)}}\sum_{I'\in \mathcal J_I} \nu(I) \approx K^{\alpha(n)-m-\alpha(n)}=K^{-m} = e^{-\epsilon \alpha(n)Q} \approx \nu(I'). $$
Hence, we have the estimate
 $$\sum_{I\in\dyadic_{\alpha(n)}} \nu(I) |f_{I}-f_{\widetilde I}|\lesssim \sum_{m=\alpha(n-1)+1}^{\alpha(n)} \sum_{I'\in \dyadic_m}\nu(I') |f_{I'}-f_{\widehat I'}|,$$
 and therefore the estimate
\begin{align*}
\|f\|_{\dlbesovo}&=\sum_{n=1}^{+\infty} \alpha(n)^\lambda\sum_{I\in\dyadic_{\alpha(n)}} \nu(I) |f_{I}-f_{\widetilde I}|\\
& \lesssim \sum_{n=1}^{+\infty} \alpha(n)^\lambda \sum_{m=\alpha(n-1)+1}^{\alpha(n)} \sum_{I'\in \dyadic_m} \nu(I') |f_{I'}-f_{\widehat I'}| \\
&\lesssim \sum_{m=1}^{+\infty} m^\lambda \sum_{I'\in \dyadic_m} \nu(I') |f_{I'}-f_{\widehat I'}| = \|f\|_{\dot{\mathcal B}^{0, \lambda}_1(\bx)}.
\end{align*}
Here in the last inequality, we used the fact that $m^\lambda>\alpha(n-1)^\lambda \geq \alpha(n)^\lambda/c_1^\lambda$ whenever $m>\alpha(n-1)$, where the constant $c_1$ is from the condition \eqref{sequence}. 
\end{proof}

\begin{example}\label{exam-strict}\rm
Let $X$ be a $2$-regular tree. We may identify each vertex of $X$ with a finite sequence formed by $0$ and $1$. For example, the children of the root can be denoted by $00$ and $01$. The children of the vertex $x=0\tau_1\cdots \tau_k$ is $0\tau_1\cdots\tau_k 0$ and $0\tau_1\cdots \tau_k 1$, where $\tau_i\in \{0, 1\}$. Moreover, each element $\xi$ of the boundary $\bx$ can be identified with an infinite sequence formed by $0$ and $1$. We denote $\xi=0\tau_1\tau_2\cdots$with $\tau_i\in \{0, 1\}$ when the geodesic from $0$ to $\xi$ passes through all the vertices $x_k=0\tau_1\cdots \tau_k$, $k\in \real$. 

We define a function $f$ on $\bx$ as follows: 
for $\xi=0\tau_1\tau_2\cdots \in \bx$ where $\tau_i\in \{0, 1\}$, we define
$$f(\xi)=\sum_{i=1}^{+\infty}  \frac{(-1)^{\tau_i}}{i^{\lambda+1}}.$$
Since the sum of $1/i^{\lambda+1}$ converges for $\lambda>0$, $f$ is well defined for all $\xi\in \bx$ and is bounded. Moreover, for any vertex $x=0\tau_1\cdots \tau_k$, it follows from the definition of $f$ that
\begin{equation}\label{exam-1}
f_{I_x}=\dashint_{I_x} f(\zeta)\, d\nu(\zeta) = \sum_{i=1}^{k} \frac{(-1)^{\tau_i}}{ i^{\lambda+1}}.
\end{equation}
Therefore, for the vertex $x$ above, we have
$$|f_{I_x}-f_{\widehat I_x}| = \frac{1}{ k^{\lambda+1}}.$$
Hence the $\dot{\mathcal B}^{0, \lambda}_1$-energy of $f$ is
\begin{align*}
\|f\|_{\dot{\mathcal B}^{0, \lambda}_1(\bx)}&=\sum_{n=1}^{+\infty} n^\lambda \sum_{I\in \dyadic_n} \nu(I) |f_{I}-f_{\widehat I}|\\
&= \sum_{n=1}^{+\infty} n^\lambda \sum_{I\in\dyadic_n} \nu(I) \frac{1}{ n^{\lambda+1}} =\sum_{n=1}^{+\infty} \frac{1}{n} =+\infty.
\end{align*}

On the other hand, for any $I\in\dyadic_{\alpha(n)}$, we have
\begin{equation}\label{dyadic_est}
|f_{I}-f_{\widetilde I}|=\left|\sum_{i=\alpha(n-1)+1}^{\alpha(n)} \frac{(-1)^{\tau_i}}{i^{\lambda+1}}\right|,
\end{equation}
where $\tau_i\in \{0, 1\}$ depends on $I$. We define a random series $\mathcal X_{\alpha(n)}$  by setting
$$\mathcal X_{\alpha(n)} =\sum_{i=\alpha(n-1)+1}^{\alpha(n)} \frac{{\sigma_i}}{i^{\lambda+1}},$$
where $(\sigma_i)_i$ are independent random variables with common distribution $P(\sigma_i=1)=P(\sigma_i=-1)=1/2$. Since the measure $\nu$ is a probability measure which is uniformly distributed on $\bx$, it follows from \eqref{dyadic_est} that
$$\sum_{I\in\dyadic_{\alpha(n)}} \nu(I)|f_I-f_{\widetilde I}| = \mathbb E(|\mathcal X_{\alpha(n)}|).$$
Here $\mathbb E(|\mathcal X_{\alpha(n)}|)$ is the expected value of $|\mathcal X_{\alpha(n)}|$. By the Cauchy-Schwarz inequality, $\mathbb E(|\mathcal X_{\alpha(n)}|)\leq (\mathbb E(\mathcal X_{\alpha(n)}^2) )^{1/2}$, we have that
\begin{align*}
\sum_{I\in\dyadic_{\alpha(n)}} \nu(I)|f_I-f_{\widetilde I}|&\leq (\mathbb E(\mathcal X_{\alpha(n)}^2) )^{1/2}=\left(\sum_{i, j=\alpha(n-1)+1}^{\alpha(n)} \frac{\mathbb E(\sigma_i\sigma_j)}{i^{\lambda+1}j^{\lambda+1}} \right)^{1/2}\\
&=\left(\sum_{i=\alpha(n-1)+1}^{\alpha(n)} \frac{\mathbb E({\sigma_i}^2)}{i^{2\lambda+2}} \right)^{1/2}=\left(\sum_{i=\alpha(n-1)+1}^{\alpha(n)} \frac{1}{i^{2\lambda+2}} \right)^{1/2}.
\end{align*}  
Here the second to last equality holds since $\sigma_i$ and $\sigma_j$ are independent for $i\not=j$ and $\mathbb E(\sigma_i\sigma_j)=\mathbb E(\sigma_i)\mathbb E(\sigma_j)=0$ for $i\not=j$.
Define $\alpha(n)=2^n$. Then we obtain that
$$\sum_{I\in\dyadic_{\alpha(n)}} \nu(I)|f_I-f_{\widetilde I}|\leq \left(\sum_{i=2^{n-1}+1}^{2^n} \frac{1}{i^{2\lambda+2}} \right)^{1/2} \leq \left(\sum_{i=2^{n-1}+1}^{2^n} \frac{1}{2^{(n-1)(2\lambda+2)}} \right)^{1/2}=\frac{1}{2^{(n-1)(\lambda+1/2)}}.$$
Therefore the $\dlbesovo$-energy of $f$ is estimated by
\begin{align*}
\|f\|_{\dlbesovo}&=\sum_{n=1}^{+\infty} \alpha(n)^\lambda \sum_{I\in\dyadic_{\alpha(n)}} \nu(I) |f_{I}-f_{\widetilde I}|\\
&\leq \sum_{n=1}^{+\infty} 2^{n\lambda}\frac{1}{2^{(n-1)(\lambda+1/2)}}=\sum_{n=0}^{+\infty} \frac{2^\lambda}{2^{n/2}} <+\infty.
\end{align*}

Hence $f\in \lbesovo$ while $f\notin {\mathcal B}^{0, \lambda}_1(\bx)$, and it follows that ${\mathcal B}^{0, \lambda}_1(\bx)$ is a strict subset of $\lbesovo$.
\end{example}

\subsection{Proof of Theorem \ref{th4}}
\begin{proof}
Let $p=(\beta-\log K)/\epsilon$ and $\lambda>p-1$ if $p>1$ or $\lambda\geq 0$ if $p=1$. From Proposition \ref{prop3.1}, the trace operator $T: N^{1,p}(X, \mul)\rarrow L^p(\bx)$ in Theorem \ref{th3}  is bounded and linear.  Now we define an extension operator $E$ by using \eqref{extension-operator1}, \eqref{extension-operator2} and \eqref{extension-operator3}. It is easy to see that the extension  $Eu$ is well defined for any function $u\in L^1_{\rm loc}(\bx)$ and that $T\circ E$ is the identity operator on $L^1_{\rm loc}(\bx)$.

Repeating the estimates in { Extension Part} of the proof of Theorem \ref{th2}, for $\theta=1-(\beta-\log K)/(p\epsilon)=0$, we also have the following estimates:
\begin{equation}\label{optimal1}
\int_X |g_{\tilde u}|^p\, d\mul\approx \|u\|^p_{{\dot {\mathcal B}}_p^{0, \lambda}(\bx)}
\end{equation}
and
\begin{equation}\label{optimal2}
\int_X |\tilde u|^p\, d\mu\lesssim \int_\bx |u|^p\, d\nu.
\end{equation}
Hence the extension operator $E$ is bounded and linear from ${ {\mathcal B}}_p^{0, \lambda}(\bx)$ to $N^{1,p}(X, \mul)$.

Moreover, since $u$ is the trace of $\tilde u$, by Theorem \ref{th3} and  Proposition \ref{prop3.1}, we have 
$$\|u\|_{L^p(\bx)}\lesssim \|\tilde u\|_{N^{1,p}(X, \mul)}.$$
Combining the above inequality with \eqref{optimal1} and \eqref{optimal2}, we obtain the estimate
\begin{equation}\label{f-optimal}
\| u\|_{{ {\mathcal B}}_p^{0, \lambda}(\bx)}\approx \|\tilde u\|_{N^{1,p}(X, \mul)}.
\end{equation} 
Hence the ${{ {\mathcal B}}_p^{0, \lambda}(\bx)}$-norm of $u$ is comparable to the ${N^{1,p}(X, \mul)}$-norm of $\tilde u= Eu$. Thus    ${{ {\mathcal B}}_p^{0, \lambda}(\bx)}$ is the optimal space for which $E$ is both bounded and linear.

\end{proof}

\noindent Pekka Koskela

\noindent
Department of Mathematics and Statistics, University of Jyv\"askyl\"a, PO~Box~35, FI-40014 Jyv\"askyl\"a, Finland

\noindent{\it E-mail address}:  \texttt{pekka.j.koskela@jyu.fi}

\medskip
\noindent Zhuang Wang

\noindent
Department of Mathematics and Statistics, University of Jyv\"askyl\"a, PO~Box~35, FI-40014 Jyv\"askyl\"a, Finland

\noindent{\it E-mail address}:  \texttt{zhuang.z.wang@jyu.fi}

\end{document}